\documentclass{article}


\usepackage[preprint]{neurips_2021}

\usepackage[utf8]{inputenc}
\usepackage[T1]{fontenc}
\usepackage{hyperref}
\usepackage{url}
\usepackage{booktabs}
\usepackage{nicefrac}
\usepackage{microtype}

\usepackage{footmisc}
\usepackage{graphicx}
\usepackage{subcaption}
\usepackage{amsfonts}
\usepackage{amsmath,amssymb,amsthm}
\usepackage{bbm}
\usepackage{xcolor}
\usepackage{pifont}

\usepackage{enumitem}

\usepackage[utf8]{inputenc} 
\usepackage[T1]{fontenc}    
\usepackage{hyperref}       
\usepackage{url}            
\usepackage{booktabs}       
\usepackage{amsfonts}       
\usepackage{nicefrac}       
\usepackage{microtype}      
\usepackage{xcolor}         

\usepackage{xspace}

\usepackage{algorithm,algpseudocode}
\usepackage{tabularx}
    \newcolumntype{L}{>{\raggedright\arraybackslash}X}

\newcommand{\argmin}{\mathop{\arg\min}}

\newtheorem{theorem}{Theorem}
\newtheorem{definition}{Definition}

\newtheorem{lemma}{Lemma}

\newtheorem{proposition}{Proposition}

\newcommand{\cmark}{\ding{51}}%
\newcommand{\xmark}{\ding{55}}%
\newcommand{\picwidth}{0.35}

\makeatletter
\let\@fnsymbol\@arabic
\makeatother

\title{Distributed Machine Learning with Sparse Heterogeneous Data}

\author{%
  Dominic Richards \\
  Department of Statistics\\
  University of Oxford\\
  24-29 St Giles’, Oxford, OX1 3LB \\
  \texttt{Dominic.Richards94@gmail.com} \\
  \And
  Sahand N Negahban \\
  Department of Statistics and Data Science\\
  Yale University\\
  24 Hillhouse Ave., New Haven, CT 06510\\
  \texttt{Sahand.Negahban@Yale.edu}
   \And
  Patrick Rebeschini  \\
  Department of Statistics\\
  University of Oxford\\
  24-29 St Giles’, Oxford, OX1 3LB \\
  \texttt{Patrick.Rebeschini@stats.ox.ac.uk} \\
}

\begin{document}

\maketitle

\begin{abstract}
Motivated by distributed machine learning settings such as Federated Learning, we consider the problem of fitting a statistical model across a distributed collection of heterogeneous data sets whose similarity structure is encoded by a graph topology. Precisely, we analyse the case where each node is associated with fitting a sparse linear model, and edges join two nodes if the difference of their solutions is also sparse. We propose a method based on Basis Pursuit Denoising with a total variation penalty, and provide finite sample guarantees for sub-Gaussian  design matrices. Taking the root of the tree as a reference node, we show that if the sparsity of the differences across nodes is smaller than the sparsity at the root, then recovery is successful with fewer samples than by solving the problems independently, or by using methods that rely on a large overlap in the signal supports, such as the group Lasso. We consider both the noiseless and noisy setting, and numerically investigate the performance of distributed methods based on Distributed Alternating Direction Methods of Multipliers (ADMM) and hyperspectral unmixing.
\end{abstract}

\section{Introduction}
The wide adoption of mobile phones, wearable and smart devices, has created a influx of data which requires processing and storage. Due to the size of these datasets and communication limitations, it is then often not feasible to send all the data to a single computer cluster for storage and processing. This has motivated the adoption of decentralised methods, such as Federated Learning \citep{mcmahan2017communication,smith2017federated}, which involves both storing and processing data locally on edge devices.

This increase in data sources has led to applications that are increasingly high-dimensional.
To be both statistically and computationally efficient in this setting, it is then important to develop approaches that can exploit the structure within the data. A natural assumption in this case is that the data is sparse in some sense. For instance, a subset of features is assumed to be responsible for determining the outcome of interest or, in the case of compressed sensing \citep{donoho2006compressed}, the data is assumed to be generated from a sparse signal.

Fitting statistical models on data collected and stored across a variety of devices presents a number of computational and statistical challenges. Specifically, the distributed data sets can be heterogeneous owing to data coming from different population distributions e.g.\ each device can have different geographic locations, specifications and users. Meanwhile, from a computational perspective, it is often unfeasible, due to network limitations and robustness, to have a single central ``master'' device collect and disseminate information. This creates a demand for statistical methodologies which are both: flexible enough to model potential statistical differences in the underlying data; and can be fit in a decentralised manner i.e.\ without the need for a centralised device to collate information. 

In this work, we set to investigate the statistical efficiency of a method for jointly fitting a collection of \emph{sparse} models across a collection of heterogeneous datasets. More precisely, models are associated to nodes within a graph, with edges then joining two models if their \emph{difference} is also assumed to be \emph{sparse}. The approach of penalising the differences between models (in an appropriate norm) has, for instance, been applied within both Federated Learning \citep{li2018federated} and Hyperspectral Denoising  \citep{iordache2012total,du2015learning} to encode heterogeneous data. In our case, we consider linear models and obtain precise insights into when jointly fitting models across heterogeneous datasets yield gains in statistical efficiency over other methods, such as, the group Lasso and Dirty Model for multi-task learning \citep{jalali2010dirty}. In addition to being solvable in a centralised manner with standard optimisation packages, the framework we consider is then directly applicable to decentralised applications, as  information only needs to be communicated across nodes/models/devices which share an edge within the graph.

\subsection{Our Contribution}
We consider a total variation scheme that penalises differences between models that share an edge in the graph. This scheme encodes the intuition that if the signal differences are sufficiently sparse then, to recover all signals in the graph, it is more statistically efficient to first recover a single signal associated to a particular reference node (root) and then recover the signal differences associated to edges.
Following the celebrated Basis Pursuit algorithm \citep{candes2006stable}, we consider the solution that minimises the $\ell_1$ norm of the model associated to a root node of the tree and the differences between models that share an edge. We refer to this method as \emph{Total Variation Basis Pursuit (TVBP)}.  A noisy variant similar to Basis Pursuit Denoising \citep{chen2001atomic} is then considered, where the linear constraint is substituted for a bound on the $\ell_2$ norm of the residuals. We call this method as \emph{Total Variation Basis Pursuit Denoising (TVBPD)}.  Note that variants of TVBPD have been successfully applied within the context of hyperspectral data \citep{iordache2012total,du2015learning} and thus, this work now provides theoretical justification in this case.

Given this framework and assuming sub-Gaussian design matrices, we show that statistical savings can be achieved by TVBP and TVBPD as opposed to solving them either independently or with methods that consider the union of supports (see Table \ref{tab:samples}). In the noiseless case, TVBP requires a total sample complexity of $O(s + n^2 s^{\prime})$ where $s$ is the root sparsity, $s^{\prime}$ is the maximum sparsity of the difference along edges and $n$ is the number of nodes. In contrast, methods like the group Lasso and Dirty Model \citep{jalali2010dirty} have an $O(n s)$ total sample complexity, which matches the case when nodes do not communicate with each other. Moreover, note that the TVBP approach does \emph{not} need to know the true underlying graph $G$, whereas the optimal stepwise approach does. If the true graph $G$ is known,  TVBP can incorporate this information to yield improved sampled complexity (see Table \ref{tab:samples:TVBP} in Section \ref{sec:Noiseless:TreeBased}).  In the noisy setting, we demonstrate that TVBPD has an $\ell_1$ estimation error bounded as $O(\sqrt{s} + \sqrt{n s^{\prime}})$, where as the stepwise approach scales as  $O(s + n \sqrt{s^{\prime}})$, and thus, achieves an $O(\sqrt{n} s^{\prime})$ saving in this case. While the step wise approach achieves optimal total sample complexity in the noiseless case, in the noisy setting its estimation scales sub-optimally compared to TVBPD.
\begin{table}[!h]
  \centering
  \renewcommand{\arraystretch}{1.4} 
  \begin{tabularx}{\linewidth}{|L|c|c|c|}
    \hline
   Method  & Total Sample Complexity $\sum_{v \in V} N_{v} $ & Sim. Recovery & Know $G$  \\
     \hline  
   Independent Basis Pursuit  & $n s + \text{Diam}(G)^2 s^{\prime}$  & \cmark & \xmark \\
   \hline
         Stepwise Basis Pursuit & $s+ n s^{\prime}$ & \xmark &\cmark 
 				\\
   \hline
   Group Lasso \citep{negahban2008joint,obozinski2011support} \newline
   Dirty Model \citep{jalali2010dirty}  \newline
   GSP \citep{feng2013generalized} & $ns + \text{Diam}(G)^2 s^{\prime}$ & 
    \cmark & \xmark \\
    \hline
    TVBP (this work) & $s + n^2 \text{Diam}(G) s^{\prime}$ 
    & \cmark & \xmark \\
    \hline
\end{tabularx}
\caption{Worst case total sample complexities (up-to logarithmic and constant factors) for recovering a collection of sparse signals $\{x^{\star}_v\}_{v \in V}$ on a tree graph $G = (V,E)$ in the noiseless case with sub-Gaussian design matrices. Sparsity of the root signal $|\text{Supp}(x^{\star}_1)|\leq s$, sparsity of difference along edges $e = (v,w) \in E$ $|\text{Supp}(x^{\star}_v - x^{\star}_w)|\leq s^{\prime}$. 
  \textit{Sim. Recovery}: Whether the method simultaneously recovers the collection of signals $\{x_v^{\star}\}_{v \in V}$. \textit{Know G}: Whether the sample complexity listed requires knowledge of relationship graph $G$. \textit{TVBP}: Total Variation Basis Pursuit \eqref{equ:TVBP}. When the algorithm \emph{does not} depend on $G$ and $s^{\prime}$, the best case of $G,s^{\prime}$ can taken.
  } 
  \label{tab:samples}
  \vspace{-0.5cm}
\end{table}

Alongside total sample complexity savings over other joint recovery methods, 
we also show that TVBP is amenable to decentralised machine learning. Specifically, the objective can be reformulated with constraints that reflect the graph topology allowing a Distributed Alternating Direction Methods of Multipliers (ADMM) algorithm \citep{Boyd:2011:DOS:2185815.2185816} to be applied. Theoretical findings are also supported by experiments (Section \ref{sec:NoisySimulated}) which demonstrate both: TVBP can outperform group Lasso methods \citep{jalali2010dirty} when model differences are sparse; and TVBPD yields qualitative improvements in hyperspectral unmixing with the real-world AVIRIS Cuprite mine data set.

A key reason for TVBP achieving sample complexity savings over variants of the group Lasso and Dirty Model \citep{jalali2010dirty}, is that the matrices associated to each tasks \emph{do not} need to satisfy an \emph{incoherence condition} with respect to their support sets. The incoherence condition is a common assumption within the analysis of sparse methods like the Lasso (see for instance \citep{wainwright2019high}) and, in short, it requires the empirical covariance of the design matrix to be invertiable when restricted to co-ordinates in the signal support set. This immediately implies that the sample size at each task is larger than the support set of that given task. In contrast, for TVBP an alternative analysis  is conducted by reformulating the problem into a standard basis pursuit objective with an augmented matrix and support set. In this case, the problem structure can be leveraged to show a \emph{Restricted Null Space Property} holds when the sample size at each task scales with the sparsity along the graph edges. This highlights that the Total Variation penalty encodes a different structure when compared to methods like the group Lasso and Dirty Model \citep{jalali2010dirty}. As shown in the noisy case, this approach can then be generalised through the \emph{Robust Null Space Property} \citep{chandrasekaran2012convex,foucart2013invitation}.

\subsection{Related Literature}
\label{sec:RelatedLit}

Learning from a collection of heterogeneous datasets can be framed as an instance of multi-task learning \citep{caruana1997multitask}, with applications in distributed contexts gaining increased attention recently. We highlight the most relevant to our setting. The works \citep{Wang2018distributed,vanhaesebrouck2017decentralized} have considered models penalised in an $\ell_2$ sense according to the network topology to encode prior information. The $\ell_2$ penalty is not appropriate for the sparse setting of our work. 
A number of distributed algorithms have been developed for the sparse setting, for a full review we refer to \citep{azarnia2017cooperative}.   
The works \citep{jalali2010dirty,sundman2016design,wang2016distributed,li2015weighted,wang2016robust,oghenekohwo2017low} have developed distributed algorithms following the group Lasso setting, in that, the signals are assumed to be composed of a common shared component plus an individual component. Within \citep{jalali2010dirty,sundman2016design,wang2016distributed} this requires each node to satisfy an incoherence condition, while the setting in \citep{li2015weighted,oghenekohwo2017low} is a specific case of a star topology within our work. For details on how the incoherence condition influences the sample complexity see discussion in Section \ref{sec:Noiseless:Setup}. The work \citep{park2012geometric} develops a manifold lifting algorithm to jointly recover signals in the absence of an incoherence assumption, although no theoretical guarantees are given.

Federated machine learning \citep{konevcny2016federated,smith2017federated,mcmahan2017communication,li2018federated} is a particular instance of distributed multi-task learning where a central node (root) holds a global model and other devices collect data and update their model with the root. Data heterogeneity can  negatively impact the performance of methods which assume homogeneous data, see \citep{mcmahan2017communication} for the case of Federated Averaging. This motivates modelling the data heterogeneity, as recently done within \citep{li2018federated} where the difference in model parameters is penalised at each step. Our work follows this approach in the case of learning sparse models, and thus, it provides insights into when an improvement in statistical performance can be achieved over other baseline methods.

Simultaneously recovering a collection of sparse vectors can also be framed into the  multiple measurement vectors framework \citep{duarte2011structured}, which has been precisely investigated for $\ell_1/\ell_q$ regularisation for $q > 1$. Specifically,  $\ell_1 / \ell_{\infty}$ was investigated within \citep{zhang2008sparsity,negahban2008joint,turlach2005simultaneous} and $\ell_1/\ell_2$ in \citep{lounici2009taking,obozinski2011support}. Other variants include the Dirty Model of \citep{jalali2010dirty}, multi-level Lasso of \citep{lozano2012multi} and tree-guided graph Lasso of \citep{kim2010tree}. In the same context, a number of works have investigated variants of greedy pursuit style algorithms \citep{feng2013generalized,chen2011hyperspectral,dai2009subspace,tropp2006algorithms}. 
These methods assume a large overlap between the signals, with their analysis often assuming each task satisfies an incoherence condition \citep{negahban2008joint,jalali2010dirty,obozinski2011support} (see Section \ref{sec:Noiseless:Setup}).

The total variation penalty is linked with the fused Lasso \citep{wang2016trend,hutter2016optimal,tibshirani2005sparsity,chambolle2004algorithm,rudin1992nonlinear} and has been widely applied to images due to it promoting piece-wise continuous signals which avoids blurring. As far as we are aware, the only work theoretically investigating the total variation penalty as a tool to link a collection of sparse linear recovery problems has been \citep{chen2010graph}.  This work considers the penalised noisy setting and gives both asymptotic statistical guarantees and an optimisation algorithm targeting a smoothed objective. In contrast, we give finite sample guarantees as well as settings where statistical savings are achieved. The application of hyperspectral unmixing \citep{iordache2011sparse,iordache2012total,du2015learning} has successfully integrated the total variation penalty within their analysis. Here, each pixel in an image can be associated to its own sparse recovery problem,  for instance, the presence of minerals \citep{iordache2012total} or the ground class e.g.\ trees, meadows etc. \citep{du2015learning}.

\section{Noiseless Setting}
This section presents results for the noiseless setting. Section \ref{sec:Noiseless:Setup} introduces the setup and details behind the comparison to other methods in Table \ref{tab:samples}. Section \ref{sec:Noiseless:TreeBased} presents analysis for \emph{Total Variation Basis Pursuit} alongside descriptions of how refined bounds can be achieved. Section \ref{sec:Noiseless:SampleComplexity} presents experimental results for the noiseless setting. Section \ref{sec:ProofSketch} gives a sketch proof of the main theorem.

\subsection{Setup}
\label{sec:Noiseless:Setup}
Consider an undirected graph $G = (V,E)$ with nodes $|V|=n$ and edges $E \subseteq V \times V$. Denote the degree of  a node $v \in V$ by $\text{Deg}(v) = |\{ (i,j) \in E: i=v \text{ or } j=v\}|$ and index the nodes $V=\{1,\dots,n\}$ with a root node associated to $1$. To each node $v \in V$, associate a signal vector $x_v^{\star} \in \mathbb{R}^{d}$. The objective is to estimate the signals $\{x_v^{\star}\}_{v \in V}$ through measurements $\{y_v\}_{v \in V}$ defined as $y_v = A_{v} x^{\star}_v \in \mathbb{R}^{N_v}$ where $A_v \in \mathbb{R}^{N_v \times d}$ is a design matrix. As we now go on to describe, we will assume the signals are both: sparse $x_v^{\star}$ for $v \in V$, and related through the graph $G$. For instance, the graph $G$ can encode a collection of wearable devices connected through a network. Each node holds a collection of data $(y_v,A_v)$ that can, for example, represent some sensor outputs. Alternatively, in Hyperspectral Denoising, each node $v \in V$ is associated to a pixel in a image and the  signal $x^{\star}_v$ indicates the presence of a mineral or classification of land type. The graph can then encode the landscape topology.

Following these examples, it is natural that the collection of signals $\{x^{\star}_v\}_{v \in V}$ will have a sparsity structure related to the graph $G$. Specifically, if two nodes share an edge $e=(v,w) \in E$ then it is reasonable that only a few co-ordinates will change from $x_v^{\star}$ to $x^{\star}_w$. For instance, in Hyperspectral Imaging we expect the composition of the ground to change by a few minerals when moving to an adjacent pixel. This can then be explicitly encoded by assuming difference in the underlying signals can also be sparse. We encode the structural assumption on the signal within the following definition. 
\begin{definition}[$(G,s,s^{\prime})$ Sparsity]
\label{ass:SparsityStructure}
A collection of signals $\{x_v^{\star}\}_{v \in V}$ is $(G,s,s^{\prime})$-sparse if the following is satisfied. The root-node signal support is bounded $|\text{Supp}(x_1^{\star})| \leq s$. For any edge $e =(v,w) \in E$ the support of the difference is bounded $|\text{Supp}(x_v-x_w)| \leq s^{\prime}$. 
\end{definition}
We are interested in the total number of samples $N_{\text{Total Samples}} := \sum_{v \in V} N_{v}$ required to recover all of the signals $\{x_v^{\star}\}_{v \in V}$. 
We begin by describing the total number of samples $N_{\text{Total Samples}}$ required by baseline methods, a summary of which in Table \ref{tab:samples}.

\textbf{Independent Basis Pursuit} For an edge $e = (v,w) \in E$, denote the support of the difference as $S_{e} = \text{supp}(x_v^{\star} - x_w^{\star})$. Let us  suppose for any pair of edges $e,e^{\prime} \in E$ the supports of the differences are disjoint from each other $S_{e} \cap S_{e^{\prime}}  = \emptyset$ and the support of the root $S_{e} \cap S_{1} = \emptyset$. Let $G$ be a tree graph and the integer $i_v \in \{0,\dots,n-1\}$ denote the graph distance from node $v$ to the root agent $1$. If each node has sub-Gaussian matrices $A_v$ and performed Basis Pursuit independently, then the number of samples required by agent $v$ to recover $x^{\star}_v$ scales as $N_{v} \geq s + i_{v} s^{\prime}$. The total sample complexity is at least $N_{\text{Total Samples}} = \sum_{v \in V} N_{v} \geq n s + s^{\prime} \sum_{v \in V} i_v = O(ns + \text{Diam}(G)^2 s^{\prime})$ where we lower bound $\sum_{v \in V} i_v$ by considering the longest path in the graph including agent $1$. The optimisation problem associated to this approach is then 
\begin{align*}
    \min_{x_v} \|x_v\|_1 \text{ subject to } A_{v}x_v = y_v \text{ for  } v \in V,
\end{align*}
which can be solved independently for each agent $v \in V$.

\textbf{Stepwise Basis Pursuit} Consider the support set structure described in \textbf{Independent Basis Pursuit}. The signals can then be recovered in a stepwise manner with a total sample complexity of $O(s+n s^{\prime})$. Precisely, order $s$ samples can recover the root signal $x_1^{\star}$, meanwhile order $n \times s^{\prime}$ samples can recover each of the differences associated to the edges. Any node's signal can then be recovered by summing up the differences along the edges. This yields a saving from  $O(n s + n^2 s^{\prime})$  to  $O(s + n s^{\prime})$, which is significant when the difference sparsity $s^{\prime}$ is small and the network size $n$ is large. This embodies the main intuition for the statistical savings that we set to unveil in our work. The associated optimisation problem in this case requires multiple steps. Let the solution at the root be denoted $\widehat{x}_1 = \argmin_{x_1 : A_1 x_1 = y_1} \|x_1\|_1$. For a path graph we recursively solve, with the notation $\sum_{i=2}^{1}\cdot = 0$, the following sequence of optimisation problems for $j=2,\dots,n$ 
\begin{align*}
    \min_{\Delta} \|\Delta\|_1 \text{ subject to } A_j \Delta = y_j - A_j \widehat{x}_1 - \sum_{i=2}^{j-1}A_{k} \widehat{\Delta}_i,
\end{align*}
with the sequence of solutions here denoted $\{\widehat{\Delta}_j\}_{j=2}^{n}$.

\textbf{Group Lasso / Dirty Model / GSP} The group Lasso \citep{negahban2008joint,obozinski2011support}, Dirty Model \citep{jalali2010dirty} and  Greedy Pursuit Style algorithms \citep{feng2013generalized,chen2011hyperspectral,dai2009subspace,tropp2006algorithms} from the Multiple Measurement Vector Framework \citep{duarte2011structured}, assume an incoherence condition  on the matrices $A_v$ for $v \in V$, which impacts the total sample complexity. Namely, for $v \in V$ denote the support $S_v = \text{Supp}(x^{\star}_v) \subseteq \{1,\dots,d\}$ alongside the design matrix restricted to the co-ordinates in $S_v$ by $(A_v)_{S_v} \in \mathbb{R}^{N_{v} \times |S_v|}$. The incoherence assumption for the Dirty Model \citep{jalali2010dirty} then requires  $(A_v)^{\top}_{S_v}(A_v)_{S_v} $ to be full-rank (invertibility), and thus, $N_v \geq |S_v|$. Since $|S_v| = s + i_v s^{\prime}$ the lower bound on $N_{\text{Total Samples}}$ then comes from \textbf{Independent Basis Pursuit}. 
One of the associated optimisation problem for the group lasso then takes the form for some $\lambda > 0$
\begin{align*}
    \min_{x_1,x_2,\dots,x_n} \sum_{v \in V} \|y_v - A_v x_v\|_2^2 + \lambda \sum_{j=1}^{d} 
    \|( (x_{1})_j,(x_{2})_j,\dots,(x_{n})_{j})\|_2.
\end{align*}

\subsection{Total Variation Basis Pursuit}
\label{sec:Noiseless:TreeBased}
To simultaneously recover the signals $\{x_v^{\star}\}_{v \in V}$ we consider the \emph{Total Variation Basis Pursuit} (TVBP) problem. Specifically for a tree-graph $\widetilde{G} = (V,\widetilde{E})$ with edges $\widetilde{E} \subset V \times V$, consider: 
\begin{align}
\label{equ:TVBP}
    \min_{x_1,x_2,\dots,x_n} & \|x_1\|_1 + \sum_{e  =(v,w) \in \widetilde{E} } \|x_v - x_w \|_1 \quad \text{subject to}
    \quad 
    A_v x_v = y_v \text{ for } v \in V.
\end{align}
Let us denote a solution to \eqref{equ:TVBP} as $\{x^{TVBP}_{v}\}_{v \in V}$. Note that $\widetilde{G}$ does not have to be equal to the graph associated to the sparsity of $\{x^{\star}_v\}_{v \in V}$. For instance, we can consider a star graph for $\widetilde{G}$ whilst $G$ is a more complex unknown graph. Furthermore, we highlight that \eqref{equ:TVBP} couples the agents solutions together in a distinctly different manner when compared to the methods in Section \ref{sec:Noiseless:Setup} e.g. Group Lasso.

We now upper bound on the number of samples $N_{1},N_{\text{Non-root}}$ required for TVBP to recover the signals.  For the following, we say that if $A$ has independent and identically distributed (i.i.d.) sub-Gaussian entries, then the $i,j$th entry $A_{ij}$ satisfies $\mathbb{P}(|A_{ij}| \! \geq\! t ) \! \leq \! \beta e^{-\kappa t^2}$ for all $t \! \geq \! 0$ for sub-Gaussian parameters $\beta$ and $\kappa$.
Let us also denote the root sample size as $N_{\text{Root}} = N_1$, with all non-root agents  having the same sample size $N_{\text{Non-root}} = N_{v}$ for $v \in V \backslash \{1\}$.  The proof for the following theorem can then be found in Appendix \ref{sec:Starlike}.  
\begin{theorem}
\label{thm:GeneralTree}
Suppose the signals $\{x^{\star}_{v}\}_{v \in V}$ are $(G,s,s^{\prime})$-sparse and the matrices satisfy $A_v = \frac{1}{\sqrt{N_v}} \widetilde{A}_v$ where  $\{\widetilde{A}_v\}_{v \in V}$ each have i.i.d.\ sub-Gaussian entries. Fix $\epsilon > 0$. If  
\begin{align*}
    N_{\text{Root}} & \gtrsim 
    \max\{s, n^2 \text{Diam}(G) s^{\prime}\} \big( \log(d) + \log(1/\epsilon) \big)
    \text{ and }\\
    N_{\text{Non-root}} & \gtrsim n \text{Diam}(G) s^{\prime} \big( \log(d) + \log(n/\epsilon) \big),
\end{align*}
then with probability greater than $1-\epsilon$ the TVBP solution with a star graph $\widetilde{G}$ is unique and satisfies $x^{TVBP}_{v} = x_v^{\star}$ for any $v \in V$. 
\end{theorem}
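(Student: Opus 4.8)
The plan is to reduce TVBP on the star graph $\widetilde{G}$ to an ordinary Basis Pursuit problem in an enlarged variable and then to certify exact recovery through a \emph{Restricted Null Space Property} (RNSP) of the enlarged design. Since the star has edges $(1,v)$ for $v\neq1$, I would set $z_1=x_1$ and $z_v=x_v-x_1$ and stack $z=(z_1,\dots,z_n)\in\R^{nd}$; the objective in \eqref{equ:TVBP} is then exactly $\|z\|_1$, while the constraints $A_v x_v=y_v$ become $Bz=y$ for the block matrix $B$ whose first block-column stacks $A_1,\dots,A_n$ and whose $v$-th diagonal block ($v\ge2$) equals $A_v$. Because $z\mapsto x$ is a linear bijection, the TVBP solution recovers $\{x_v^\star\}$ exactly and uniquely if and only if $z^\star=(x_1^\star,\,x_2^\star-x_1^\star,\dots,x_n^\star-x_1^\star)$ is the unique minimiser of $\|z\|_1$ over $\{z:Bz=y\}$.

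Next I would pin down the support $S=(S_1,\dots,S_n)$ of $z^\star$. The root block obeys $|S_1|\le s$ by hypothesis. For $v\ge2$, telescoping $z_v^\star=x_v^\star-x_1^\star$ along the unique path from $v$ to the root in the \emph{true} tree $G$ writes it as a sum of $i_v\le\text{Diam}(G)$ edge-differences, each of support at most $s'$, so $|S_v|\le\text{Diam}(G)s'$; this is the only place where $G$ (rather than the algorithmic graph $\widetilde{G}$) enters. With $S$ fixed, I would invoke the standard equivalence that $z^\star$ is the unique $\ell_1$-minimiser precisely when $B$ satisfies the RNSP at $S$, i.e. \[ \|\delta_S\|_1<\|\delta_{S^c}\|_1\quad\text{for all }\delta\in\ker B\setminus\{0\}. \]

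To verify the RNSP I would parameterise the null space as $\delta_1=h_1\in\ker A_1$ and $\delta_v=h_v-h_1$ with $h_v\in\ker A_v$, reducing the claim to $\|(h_1)_{S_1}\|_1+\sum_{v\ge2}\|(h_v-h_1)_{S_v}\|_1<\tfrac12\big(\|h_1\|_1+\sum_{v\ge2}\|h_v-h_1\|_1\big)$. The main ingredient is the sub-Gaussian restricted-eigenvalue / RIP concentration that the cited convex-geometry results supply: with $N_v\gtrsim k(\log d+\log(1/\epsilon))$ samples one controls $\|g_T\|_1\le\rho\|g\|_1$ for every $g\in\ker A_v$ and $|T|\le k$, with the ratio $\rho$ tunable down to order $\sqrt{k\log d/N_v}$. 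Applied with $k=|S_1|=s$ at the root and $k=|S_v|=\text{Diam}(G)s'$ at the remaining nodes, this handles the ``diagonal'' contributions $\|(h_1)_{S_1}\|_1$ and $\|(h_v)_{S_v}\|_1$ at a constant ratio, needing only $N_1\gtrsim s$ and $N_{\text{Non-root}}\gtrsim\text{Diam}(G)s'$ up to logs.

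The hard part is the coupling: the single root vector $h_1$ leaks into all $n-1$ difference blocks, producing the cross terms $\sum_{v\ge2}\|(h_1)_{S_v}\|_1$ and the residuals $\|A_v(h_v-h_1)\|_2=\|A_v h_1\|_2$, both of which accumulate over $n$ nodes. Suppressing this $n$-fold accumulation is exactly what forces the root null-space mass to be damped by a factor $\sim 1/n$ at sparsity level $\text{Diam}(G)s'$, which is the origin of $N_{\text{Root}}\gtrsim n^2\text{Diam}(G)s'$. The delicate obstacle is to avoid paying the same $n^2$ at the non-root nodes: the crude bound $\|h_v\|_1\le\|\delta_v\|_1+\|h_1\|_1$ overcharges $h_1$ and would demand $N_{\text{Non-root}}\gtrsim n^2\text{Diam}(G)s'$. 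I expect the resolution to come from the fact that heavy sampling at the root forces every $h_1\in\ker A_1$ to be strongly delocalised, $\|h_1\|_2\lesssim\sqrt{\log d/N_1}\,\|h_1\|_1$, and that lightly-sampled sub-Gaussian matrices act as near-isometries on such spread-out vectors, so that $\|A_v h_1\|_2\approx\|h_1\|_2$ is controlled by the root budget alone and does not re-enter the per-node count. Closing this step, together with a union bound over the $n$ concentration events (hence the $\log(n/\epsilon)$) and transporting uniqueness of $z^\star$ back through the bijection to $x_v^{TVBP}=x_v^\star$, completes the argument.
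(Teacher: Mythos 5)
Your overall route is the same as the paper's: the paper proves Theorem~\ref{thm:GeneralTree} by observing that a $(G,s,s^{\prime})$-sparse family is $(\widetilde{G},s,\text{Diam}(G)s^{\prime})$-sparse for the star $\widetilde{G}$ and then invoking Theorem~\ref{thm:Starlike}, whose proof is exactly your plan — reformulate as Basis Pursuit in $(x_1,\{\Delta_e\})$, verify the Restricted Null Space Property, handle the diagonal blocks by per-node RIP at levels $s$ and $\text{Diam}(G)s^{\prime}$, and control the coupling terms $\|A_v x_1\|_2$ using the delocalisation $\|x_1\|_2\lesssim\|x_1\|_1/\sqrt{\ell}$ of root kernel vectors, with $\ell\sim n^2\text{Diam}(G)s^{\prime}$ forced by the $n$-fold accumulation. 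Your support bookkeeping, null-space parameterisation, and the identification of where $n^2\text{Diam}(G)s^{\prime}$ comes from at the root are all correct.

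The one step you defer is the one that does not close as you describe it. You claim the lightly-sampled $A_v$ ``act as near-isometries'' on the delocalised vector $h_1$, so that $\|A_v h_1\|_2\approx\|h_1\|_2$ per node and the cross term costs nothing at the non-root nodes. But a single sub-Gaussian $A_v$ with $N_v\asymp n\,\text{Diam}(G)s^{\prime}\log d$ rows is \emph{not} an approximate isometry on vectors of effective sparsity $\ell\asymp n^2\text{Diam}(G)s^{\prime}$: the uniform bound over $\{\|x\|_2\le 1,\ \|x\|_1\le\sqrt{\ell}\}$ for such an undersampled matrix carries an extra factor of order $\sqrt{\ell\log d/N_v}\asymp\sqrt{n}$, and feeding that into $\sum_{v\ge 2}\|A_vh_1\|_2$ loses a factor $\sqrt{n}$ in the RNSP inequality. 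The paper's resolution is different: it bounds $\sum_{v\ge2}\|A_vh_1\|_2\le(n-1)\|A_{\text{Combined}}h_1\|_2$ by Cauchy--Schwarz, where $A_{\text{Combined}}$ is the (renormalised) row-wise concatenation of all non-root matrices. That aggregate matrix has $(n-1)N_{\text{Non-root}}$ rows, so it \emph{is} RIP at level $\ell\asymp n^2\text{Diam}(G)s^{\prime}$ precisely when $N_{\text{Non-root}}\gtrsim n\,\text{Diam}(G)s^{\prime}\log d$ — which is why the cross term does re-enter the per-node count, contributing the factor $n$ in the non-root condition (your diagonal analysis alone would only require $\text{Diam}(G)s^{\prime}$ there, so your own sample-size accounting is off by this factor if the cross term truly cost nothing per node). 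Replacing the per-node near-isometry claim with the Cauchy--Schwarz/concatenation argument (together with the convex-hull lemma bounding $\sup\{\|A_{\text{Combined}}x\|_2:\|x\|_2\le c/\sqrt{\ell},\|x\|_1\le 1\}$) completes your proof and recovers the stated rates.
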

Theorem \ref{thm:GeneralTree} provides conditions on the number of samples held by each agent in order for TVBP to recover the signals when $\widetilde{G}$ is a star topology. As seen in Table \ref{tab:samples}, the total number of samples in this case satisfies $N_{\text{Total Samples}} = O(s + n^2 \text{Diam}(G) s^{\prime})$. As we now go on to describe, the sample complexities in Theorem \ref{thm:GeneralTree} are  \emph{worst case} since we have assumed no prior knowledge of $G$. 

Incorporating knowledge of the signal graph $G$ into the TVBP problem \eqref{equ:TVBP}  naturally influences the total sample complexity required for recovery. Table \ref{tab:samples:TVBP} provides a summary of the total complexity required in two  different cases. Precisely, an improved total sample complexities can be achieved when: $G$ is a known tree graph, as well as when the non-root design matrices $\{A_v\}_{v \in V \backslash \{1\}}$ are the same. When $G$ is a  known tree graph, the sample complexity is reduced to $s + \max\{n^2,n \text{Deg}(V\backslash \{1\})^2\text{Diam}(G)^2\}s^{\prime}$ from $s + n^2 \text{Diam}(G)$ previously. In the case $\text{Diam}(G) = O(\sqrt{n})$, then an order $\sqrt{n}s^{\prime}$ saving in achieved. Meanwhile, if $\{A_{v}\}_{v \in V \backslash \{1\}}$ are also equal, then the sample complexity reduces to $s + n \text{Deg}(1)^2 s^{\prime}$ which, for constant degree root nodes $\text{Deg}(1)$, matches the optimal stepwise method. This precisely arises due to the null-spaces of the non-root nodes matrices being equal in this case, allowing the analysis to be simplified. Details of this are provided within the proof sketch. The assumption that the sensing matrices $\{A_{v}\}_{v \in V \backslash \{1\}}$ are equal is then natural for compressed sensing and Hyperspectral applications as $A_v$ represents the library of known spectra, and thus, can be identical across the nodes. 
\begin{table}[!h]
  \centering
  \renewcommand{\arraystretch}{1.4} 
  \begin{tabularx}{\linewidth}{|L|c|c|c|}
    \hline
   Method \& Assumptions  & Total Sample Complexity $\sum_{v \in V} N_{v} $  & Know $G$  \\
     \hline  
    TVBP  & $s + n^2 \text{Diam}(G) s^{\prime}$ 
      & \xmark \\
    \hline
    TVBP + $G$ Tree   & $s + \max\{n^2,n \text{Deg}(V\backslash \{1\})^2\text{Diam}(G)^2\}  s^{\prime}$ 
    & 
    \cmark\\
    \hline
    TVBP + $G$ Tree + $\{A_{v}\}_{v \in V \backslash \{1\}}$ equal
    & 
    $s + n \text{Deg}(1)^2 s^{\prime}$
    & 
    \cmark\\
    \hline
\end{tabularx}
\caption{Setting as described in Table \ref{tab:samples}. Comparing total sample complexity for TVBP with different assumptions: whether $G$ is a known tree; or $G$ is a known tree and the design matrices $\{A_v\}_{v \in V \backslash \{ 1\}}$ are identical.
Formal results can be found in Theorems  \ref{thm:Starlike} and \ref{thm:IdenticalMatrices} within the Appendix \ref{sec:RefinedResults}. When the algorithm depends upon the choice of graph $G$, the rate given depends upon this choice of  $G$ and the associated sparsity along edges $s^{\prime}$.
  } 
  \label{tab:samples:TVBP}
  \vspace{-0.8cm}
\end{table}

\subsection{Experiments for Noiseless Case}
\label{sec:Noiseless:SampleComplexity}
This section presents numerical experiments for Total Variation Basis Pursuit problem \eqref{equ:TVBP}. The paragraph  \textbf{Statistical Performance} focuses on the statistical performance of TVBP, supporting the results summarised in Table \ref{tab:samples:TVBP}. Paragraph  \textbf{Distributed Algorithm} outlines how the objective \eqref{equ:TVBP} can be solved in a decentralised manner. 

 \textbf{Statistical Performance} Figure \ref{fig:SampleComplexity1} plots the probability of recovery against the number of  samples held by non-root nodes $N_v$ for $v \in V \backslash \{1\}$ with a fixed number of root agent samples $N_{1} = \lfloor 2s \log(ed/s)\rfloor$. Observe, for a path topology and  balanced tree topology, once the non-root nodes have beyond approximately $30$ samples, the solution to TVBP finds the correct support for all of graph sizes. In contrast, the number of samples required to recover a signal with Basis Pursuit at the same level of sparsity and dimension considered would require at least $80$ samples, i.e.\ $2s\log(ed/s)$. We therefore save approximately $50$ for each non-root problem. 
\begin{figure}[!h]
\centering
\includegraphics[width = \picwidth\textwidth]{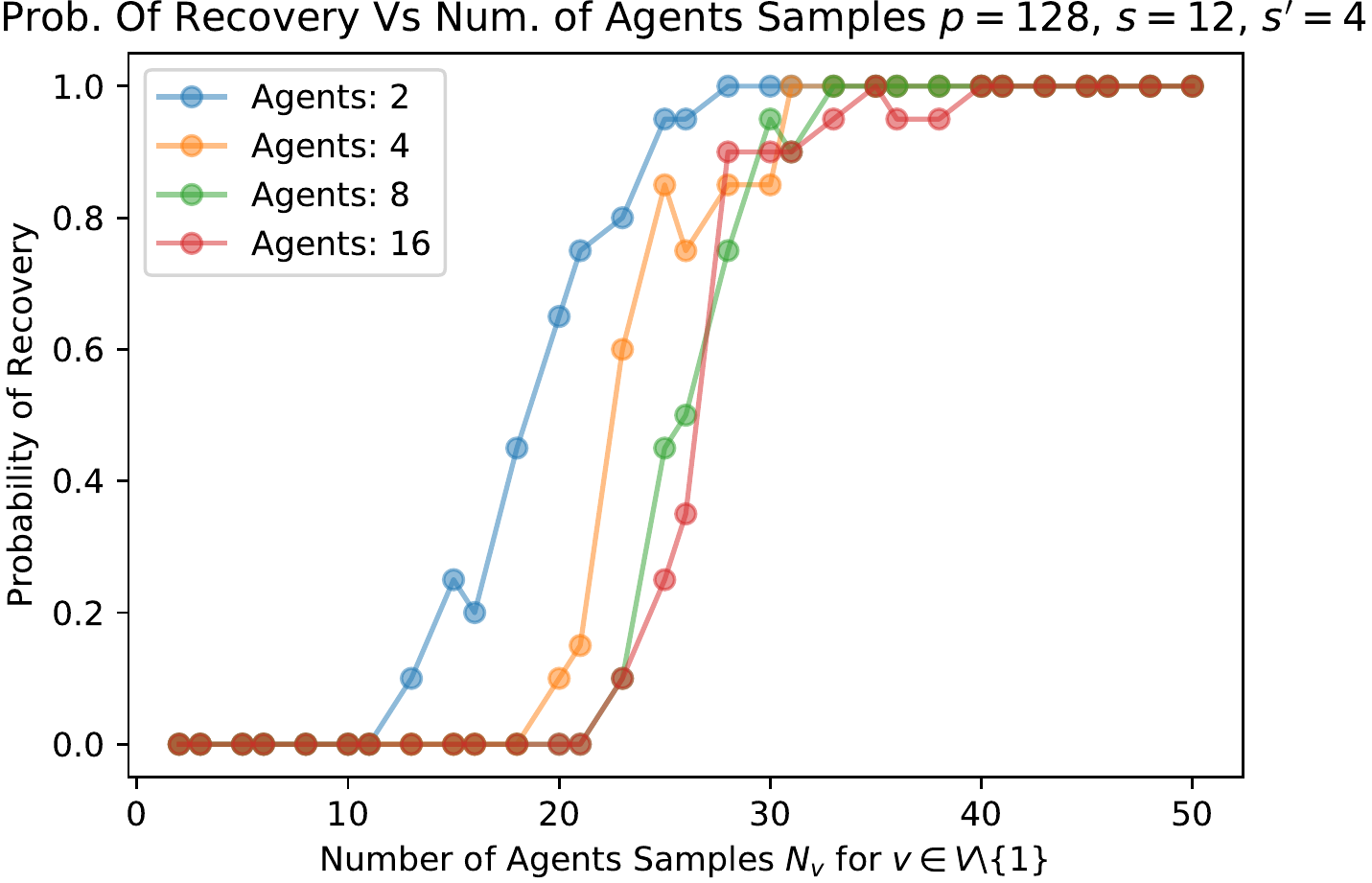}
\hspace{0.2cm}
\includegraphics[width = \picwidth\textwidth]{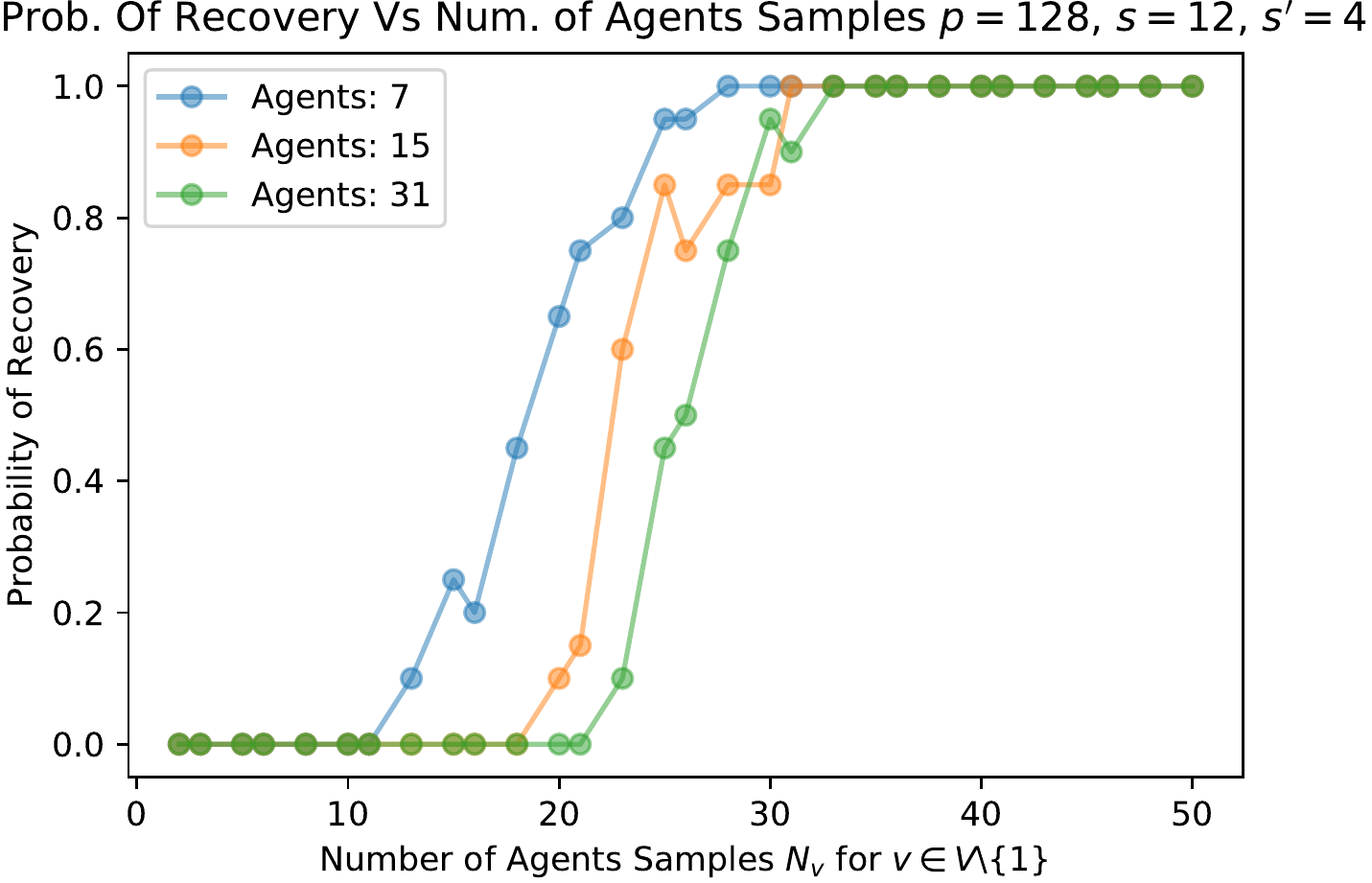}
\caption{Probability of recovery vs number of non-root node samples $N_{v}$ for $v \in V \backslash \{1\}$. Problem setting $d = 128$, $s = 12$, $s^{\prime} = 4$ and $N_1 = \lfloor 2s\log(ed/s) \rfloor = 80$, for path (\textit{Left}) and balance tree with branches of size 2 (\textit{Right}). Lines indicates graph sizes with $n \in \{2,4,8,16\}$ for path and $n \in \{7,15,31\}$ for balanced tree with heights of $\{2,3,4\}$ respectively. Solution to reformulated problem  \eqref{equ:DecentralisedPursit:Reformulated} found using CVXOPT. Each point is an average of 20 replications. Signal sampled from $\{1,-1\}$, differences concatenation of $s^\prime$  values. $\{A_v\}_{v \in V}$ standard Gaussian and $\widetilde{G} = G$.  }
\label{fig:SampleComplexity1}
\vspace{-0.3cm}
\end{figure} 

\textbf{Distributed Algorithm}
To solve the optimisation problem \eqref{equ:TVBP} in a decentralised manner an ADMM algorithm can be formulated, the details of which are given in Appendix \ref{sec:Simulations:Decentralised}. The Optimisation Error for the method is plotted in Figure \ref{fig:OptError}, which is seen to converge with a linear rate. The convergence for a path topology is slower, reaching a precision of $10^{-8}$ in 300 iterations for $7$ nodes, while the same size balanced tree topology reaches a precision of $10^{-15}$. This is expected as the balanced trees considered are more connected than a path, and thus, information propagates around the nodes quicker. Larger tree topologies also require additional iterations to reach the same precision, with a size $63$ tree reaching a precision of $10^{-7.5}$ in 300 iterations. 
\begin{figure}[!h]
\centering
\includegraphics[width = \picwidth\textwidth]{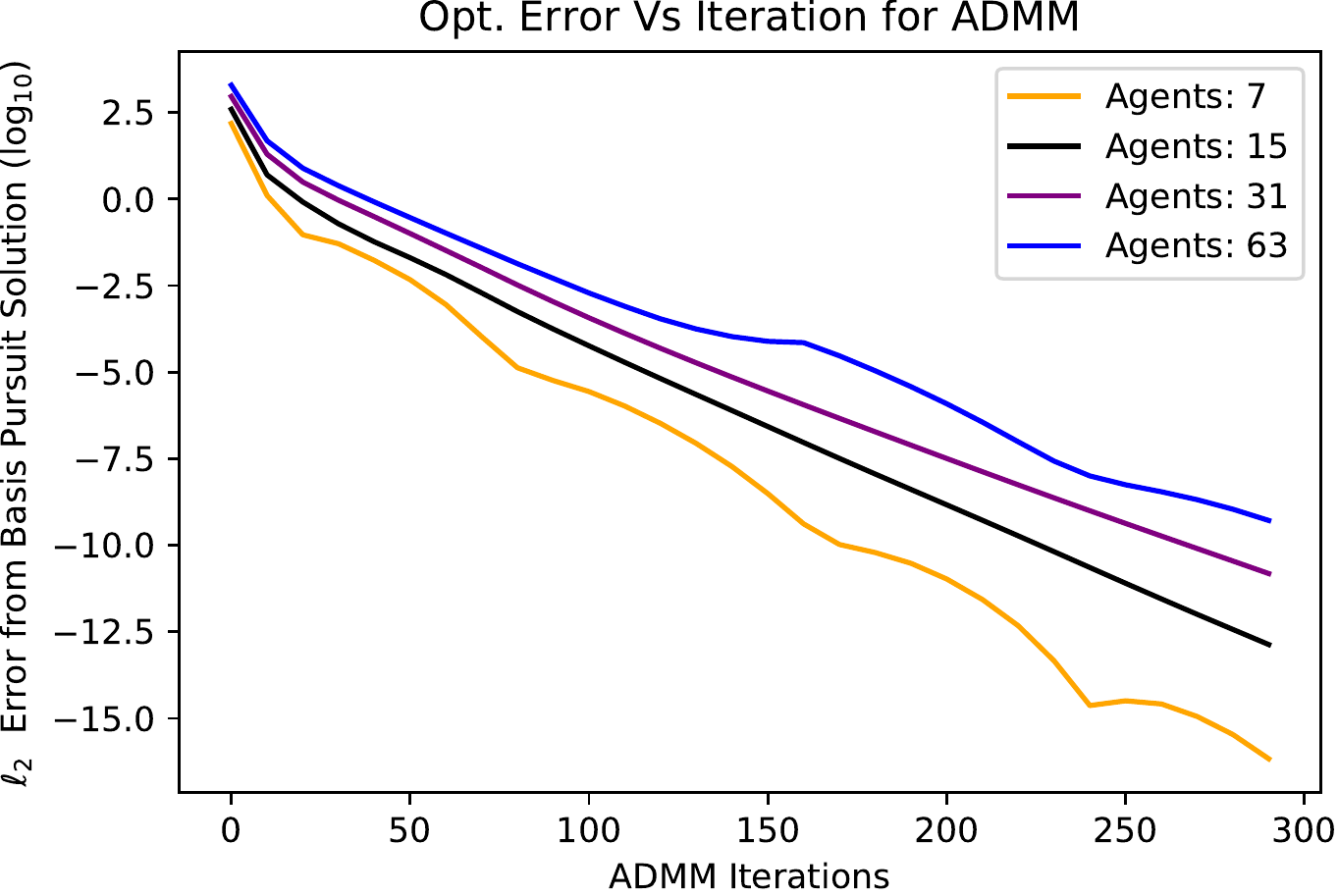}
\hspace{0.2cm}
\includegraphics[width = \picwidth\textwidth]{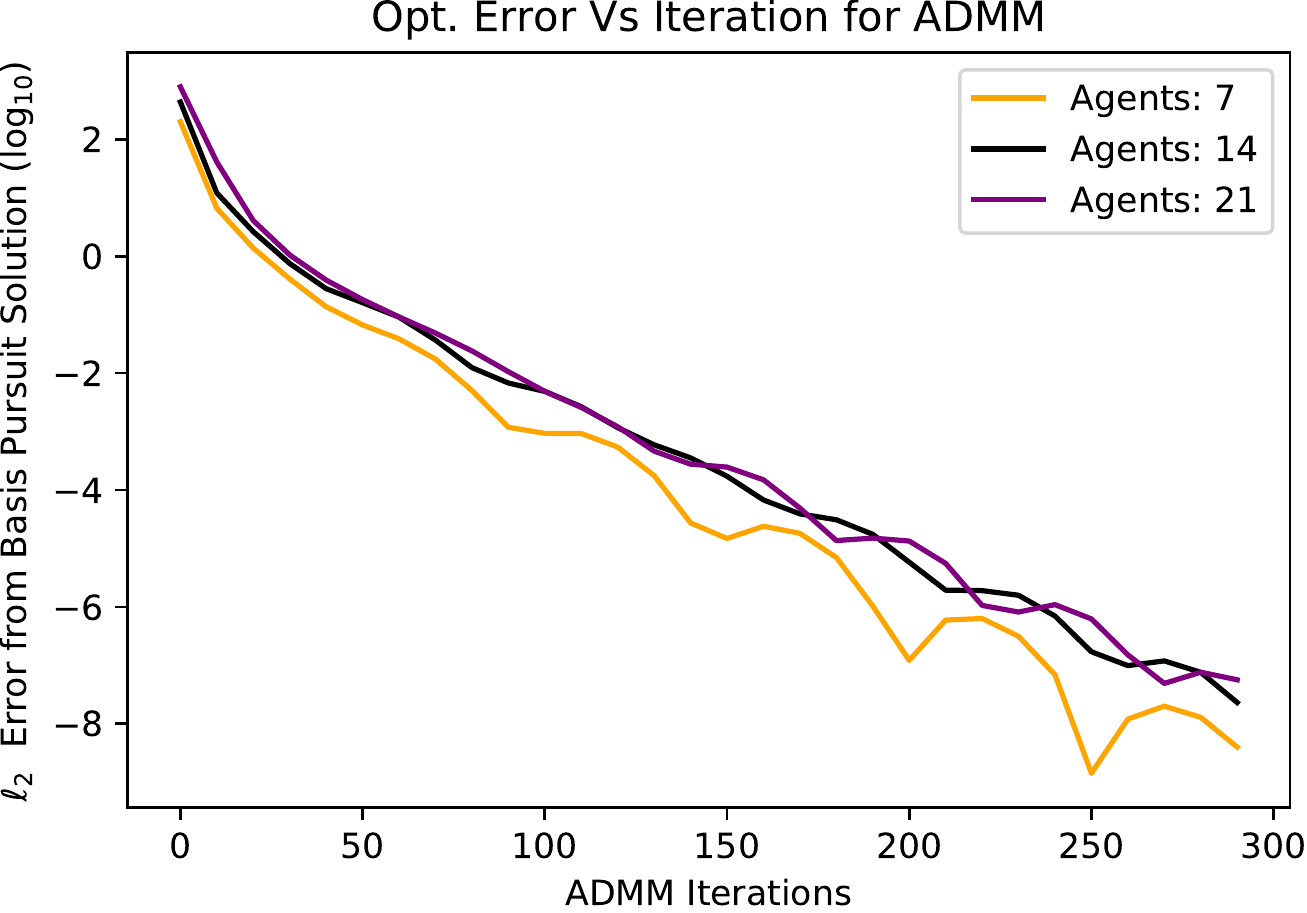}
\caption{Optimisation error $\|x^{t} - x^{\star}_{\text{BP}}\|_2^2$ (Log scale) vs Iterations for ADMM method with $\rho = 10$ for different graph sizes (lines) and topologies (plots). Here $x^{\star}_{\text{BP}}$ is solution to \eqref{equ:TVBP}.  Problem parameters $d = 2^{9}$, $s = \lfloor 0.1 d \rfloor$ and $s^{\prime} = 4$. \textit{Left}: Balanced trees, branch size $2$ and heights $\{2,3,4,5\}$. \textit{Right}: Path topology.  Agent sample size $N_1 = 2s\log(ep/2s)$ and $N_v = 150$ for $v \not= 1$. Matrices $\{A_v\}_{v \in V}$ i.i.d.\ standard Gaussian  entries, $x^{\star}_1$ has $s$ values randomly drawn from $\{+1,-1\}$ and 
$\{\Delta^{\star}_e\}_{e \in E}$ each have $s^{\prime}$ i.i.d.\ standard Gaussian entries, locations chosen at random. }
\label{fig:OptError}
\vspace{-0.5cm}
\end{figure}

\subsection{Proof Sketch for Theorem \ref{thm:GeneralTree}}
\label{sec:ProofSketch}
This section provides a proof sketch for Theorem \ref{thm:GeneralTree}. For a response $y \in \mathbb{R}^{N}$ and design matrix $A \in \mathbb{R}^{N \times d}$, recall that the \emph{Basis Pursuit} problem is given by 
\begin{align}
\label{equ:BasisPursuit}
    \min \|x\|_1 \text{ subject to } Ax = y.
\end{align}
Let us denote the solution to \eqref{equ:BasisPursuit} as $x^{\text{BP}}$.
It is known that if $y = A x^{\star}$ for some sparse vector $x^{\star}$ supported on $S$, then the solution to \eqref{equ:BasisPursuit} recovers the signal $x^{\text{BP}} = x^{\star}$ if and only if $A$ satisfies the \emph{Restricted Null Space Property} with respect to $S$, that is, 
\begin{align}
\label{equ:RSP}
    2 \|(x)_{S}\|_1 \leq  \|x\|_1 \text{ for } x \in \text{Ker}(A) \backslash \{0\}.
\end{align}
In the case $A$ has i.i.d.\ sub-Gaussian entries and $N \gtrsim |S|c^{-2} \log(d/\delta)$ for $c \in (0,1/2)$, we then have $\|(x)_S\|_1 \leq c \|x\|_1$ with probability greater than $1-\delta$ \citep{foucart2013invitation}. 

The proof for Theorem \ref{thm:GeneralTree} proceeds in two steps. Firstly, the TVBP problem \eqref{equ:TVBP} is reformulated into a standard basis pursuit problem \eqref{equ:BasisPursuit} with an augmented matrix $A$, signal $x^{\star}$ and support set $S$. Secondly, we show that the Restricted Null Space Property can be satisfied in this case by utilising the structure of $A$, $x^{\star}$ and $S$. Each of these steps is described within the following paragraphs. For clarity, we assume the TVBP problem with $\widetilde{G}=G$, the signal graph $G$ being a tree graph with agent $1$ as the root, and all non root agents with equal design matrix $A_v = \widehat{A}$ for $v \in \{2,3,4,5,\dots,n\}$. Discussion on weakening these assumptions is provided at the end, with fulls proofs given in Appendix \ref{sec:Proof:Noiseless}.  

\textbf{Reformulating TVBP problem}
Let us denote $x_1 \in \mathbb{R}^{d}$ and $\Delta_{i} \in \mathbb{R}^{d}$ for  $i=1,\dots,n-1$ where edges $e \in  E$ are associated to an integer $e \rightarrow i$. The TVBP problem \eqref{equ:TVBP} can then be reformulated as 
\begin{align*}
    \min_{x_1, \{\Delta_e\}_{e \in E} } \|x_1\|_1 + \sum_{i=1}^{n-1} \|\Delta_i\|_1 \quad
    \text{subject to}
    \quad
    \begin{pmatrix}
    A_1 & 0 & 0 & \dots & 0 \\
    \widetilde{A} & \widetilde{A} & H_{13} & \dots & H_{1n} \\
    \widetilde{A} & H_{22} & \widetilde{A} & \dots & H_{2n} \\
    \vdots & \ddots & \ddots & \ddots & \vdots \\
    \widetilde{A} & H_{n2} & H_{n3} & \dots & \widetilde{A} \\
    \end{pmatrix}
    \begin{pmatrix}
    x_1 \\
    \Delta_{1}\\
    \Delta_{2}\\
    \vdots \\
    \Delta_{n-1}
    \end{pmatrix}
    = 
    \begin{pmatrix}
    y_1 \\
    y_2\\
    y_3\\
    \vdots \\
    y_n
    \end{pmatrix}
\end{align*}
where the matrices $\{H_{ij}\}_{i,j=1,\dots,n-1}$ take values $H_{ij} = \widetilde{A}$ if the $j$th agent is on the path from agent $i$ to the root node $1$, and $0$ otherwise. The above is equivalent to a Basis Pursuit problem \eqref{equ:BasisPursuit} with  $x = (x_1,\Delta_{1},\Delta_{2},\dots,\Delta_{n})$, $y = (y_1,y_2,\dots,y_n)$, an $A \in \mathbb{R}^{nd \times nd}$ encoding the linear constraint above, and sparsity structure $S = S_1 \cup \big\{ \cup_{e,i} ( S_{e} + i) \big\}$. That is for an edge $e = (v,w) \in E$ the support $S$ is constructed by off-setting $S_e = \text{Supp}(x_v^{\star} - x^{\star}_w)$ by an integer $(S_e + i) = \{k + i: k \in S_e\}$. 

\textbf{Showing Restricted Null Space Property} We begin by noting that if $x \in \text{Ker}(A)\backslash \{0\}$ then 
\begin{align}
\label{equ:KernelCondition}
    A_1 x_1 = 0 \quad\quad \widetilde{A} \Delta_e = 0 \text{ for } e= (v,w) \in E \text{ such that } v,w \not= 1,
\end{align}
the second equality being over edges $e$ not connected to the root node. To see this, suppose the edge $e \in E$ is both: associated to the integer $i$; and not directly connected to the root $1 \not\in e$. Consider the edge neighbouring $e$ closest to the root, say, $e^{\prime} \in E$ with integer $j$.  We have from $x \in \text{Ker}(A) \backslash \{0\}$
\begin{align*}
    \widetilde{A} x_1 + \widetilde{A} \Delta_i  + \sum_{k \not= i} H_{ik} \Delta_k = 0
    \quad\quad \text{ and } \quad\quad 
    \widetilde{A} x_1 + \widetilde{A} \Delta_j  + \sum_{k \not= j} H_{jk} \Delta_k = 0.
\end{align*}
Taking the difference of the two equations we get $\widetilde{A}\Delta_i = 0 $ since both: $j$ is on the path from $i$ to the root so $H_{ij} = \widetilde{A}$; and the path from $j$ to the root is shared i.e.\ $\sum_{k \not=i,j} H_{ij}\Delta_{k} = \sum_{k \not= j} H_{jk}\Delta_k $. 

In a similar manner to Basis Pursuit, the constraints \eqref{equ:KernelCondition} are used to control the norms $\|(x_1)_{S_1}\|_1$ and $\|(\Delta_{e})_{S_{e}}\|_1$ for $e \in E \backslash \{ e \in E : 1 \in e \}$. Precisely, if $A_1 \in \mathbb{R}^{N_1 \times d}$ and $\widetilde{A} \in \mathbb{R}^{N_{\text{Non-root}} \times d}$ both i.i.d.\ sub-Gaussian with  $N_1 \gtrsim s \log(1/\delta)$  and $N_{\text{Non-root}} \gtrsim s^{\prime}\log(1/\delta)$, then $\|(x_1)_{S_1}\|_1 \leq \|x_1\|_1/4$ and $\|(\Delta_{e})_{S_{e}}\|_1 \leq  \|\Delta_{e}\|_1/4$ with high probability. Controlling the norm for the edges $e \in E$ connected to the root $1 \in e$ is then more technical. In short, if $N_1,N_{\text{Non-Root}} \gtrsim \text{Deg}(1)^2 s^{\prime} \log(1/\delta) $ then $\|(\Delta_e)_{S_{e}}\|_1 \leq  \big( \|x_1\|_1 + \|\Delta_e\|_1\big)/4\text{Deg}(1) $. Summing up the bounds gives 
\begin{align*}
   \|(x)_{S}\|_1 = \|(x_1)_{S_1}\|_1 + \sum_{e \in E}\|(\Delta_e)_{S_e}\|_1 
    \leq 
    \frac{1}{2} \|x_1\|_1 + \frac{1}{2} \sum_{e \in E} \|\Delta_{e}\|_1
    =\frac{1}{2} \|x\|_1
\end{align*}
as required. When the matrices $\{A_v\}_{v \in V \backslash \{1\}}$ are different the condition \eqref{equ:KernelCondition} may no longer be satisfied, and thus, an alternative analysis is required. Meanwhile, if $\widetilde{G}$ is a star and does not equal $G$, a different sparsity set $\widetilde{S}$ is considered where the support along edges are swapped $s^{\prime} \rightarrow \text{Diam}(G) s^{\prime}$.

\section{Noisy Setting}
This section demonstrates how the TVBP problem can be extended to the noisy setting,
Section \ref{sec:TVBasisPursuitDenoisingProblem} introduces Total Variation Basis Pursuit Denoising (TVBPD) alongside theoretical guarantees. 
Section \ref{sec:NoisySimulated} presents experiments investigating the performance of TVBPD.  

\vspace{-0.1cm}
\subsection{Total Variation Basis Pursuit Denoising}
\label{sec:TVBasisPursuitDenoisingProblem}
Let us assume that $y_v = A_v x_v^{\star} + \epsilon_v$ for $v \in V$. In this case the equality constraint in the TVBP problem \eqref{equ:TVBP} is swapped for a softer penalisation, leading to the \emph{Total Variation Basis Pursuit Denoising} (TVBPD) problem for a graph $\widetilde{G} = (V,\widetilde{E})$ and penalisation $\eta > 0$
\begin{align}
\label{equ:DecentralisedDenoising}
\min_{\{x_v\}_{v \in V} } \|x_1\|_1 & + \sum_{e=(v,w) \in \widetilde{E}} \|x_v - x_w\|_1 \text{ subject to } \sum_{v \in V} \| A_v x_v - y_v\|_2^2  \leq \eta^2.
\end{align}
The equality constraint $A_v x_v = y_v$ at each agent $v \in V$ in \eqref{equ:TVBP} is now swapped with an upper bound on the deviation $\|A_v x_v - y_v\|_2^2$. Given this problem, we now provide gaurantees on the $\ell_1$ estimator error for the solution of \eqref{equ:DecentralisedDenoising}. The proof of the following Theorem is in Appendix \ref{sec:TVBasisPursuitDenoisingProblem}.   
\begin{theorem}
\label{thm:TVBPD}
Suppose $G$ is a tree graph, the signals $\{x_v^{\star}\}_{v \in V}$ are $(G,s,s^{\prime})-$sparse and $y_v = A_v x^{\star}_v + \epsilon_v$ for $v \in V$. Assume that $A_v = \widetilde{A}_v/\sqrt{N_{v}}$ where $\{\widetilde{A}_v\}_{v \in V}$ each have i.i.d.\ sub-Gaussian entries.  Fix $\epsilon > 0$. If $\eta^2 \geq  \sum_{v \in V} \|\epsilon_{v}\|_2^2/n$ and 
\begin{align*}
    N_{\text{Root}}  \gtrsim s \big(\log(d) + \log(1/\epsilon))
    \quad \text{ and } \quad
    N_{\text{Non-root}}  \gtrsim n^2 s^{\prime} \big(\log(d) + \log(n/\epsilon)\big),
\end{align*}
then with probability greater than $1-\epsilon$ the solution to \eqref{equ:DecentralisedDenoising} with $\widetilde{G} = G$ satisfies
\begin{align*}
    & \|x_1 - x_1^{\star}\|_1 
    + 
    \sum_{e = (v,w) \in E} \|(x_v - x_w) - (x^{\star}_v - x^{\star}_w) \|_1
    \lesssim
    \big(\sqrt{s} + \text{Deg}(G) \sqrt{n s^{\prime}}\big)\eta.
\end{align*}
\vspace{-0.5cm}
\end{theorem}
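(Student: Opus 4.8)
The plan is to mirror the two-step strategy of the noiseless proof sketch in Section \ref{sec:ProofSketch}, replacing the Restricted Null Space Property \eqref{equ:RSP} by its robust analogue. First I would reformulate \eqref{equ:DecentralisedDenoising} with $\widetilde{G}=G$ exactly as in Section \ref{sec:ProofSketch}: introduce the edge variables $\Delta_e = x_v - x_w$, stack $h = (x_1,\Delta_1,\dots,\Delta_{n-1})$, and use the same augmented matrix $A \in \R^{nd\times nd}$ and augmented support $S = S_1 \cup \{\cup_{e,i}(S_e+i)\}$. Since the block corresponding to node $i$ of the residual $A h - y$ is exactly $A_{v_i}x_{v_i}-y_{v_i}$, the constraint $\sum_v \|A_v x_v - y_v\|_2^2 \le \eta^2$ becomes $\|A h - y\|_2 \le \eta$, the objective is exactly $\|h\|_1$, and the left-hand side of the claimed bound is precisely $\|h^{\#}-h^{\star}\|_1$ for the minimiser $h^\#$. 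The choice of $\eta$ guarantees that $h^\star$ is feasible, so the problem reduces to a Basis Pursuit Denoising instance and it remains only to certify a robust recovery guarantee for $A$.

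The core of the argument is to show that $A$ satisfies the $\ell_1$ Robust Null Space Property with respect to $S$: there exist $\rho \in (0,1)$ and $\tau>0$ with $\|h_S\|_1 \le \rho \|h_{S^c}\|_1 + \tau \|A h\|_2$ for all $h$. I would establish this block by block, exactly paralleling the noiseless control of $\|(x_1)_{S_1}\|_1$ and $\|(\Delta_e)_{S_e}\|_1$, but now retaining the residual. Write $r = Ah$ and let $r^{(i)}$ be its block corresponding to node $i$, so $\sum_i \|r^{(i)}\|_2^2 = \|Ah\|_2^2$. For the root block, $r^{(1)} = A_1 x_1$ gives $\|A_1 x_1\|_2 \le \|Ah\|_2$, and the single-matrix robust null space property of $A_1$ (valid once $N_{\text{Root}} \gtrsim s\log(d/\epsilon)$, with robust constant of order $\sqrt{s}$ after the $A_1 = \widetilde{A}_1/\sqrt{N_1}$ normalisation) bounds $\|(x_1)_{S_1}\|_1$ up to $\tfrac18\|x_1\|_1 + \sqrt{s}\,\|Ah\|_2$. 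For an edge $e$ not incident to the root I would subtract the two block-row equations of $e$ and its parent edge, exactly as in the derivation of \eqref{equ:KernelCondition}, except the difference is now $A_v\Delta_e = r^{(i)}-r^{(j)}$ rather than $0$; hence $\|A_v \Delta_e\|_2 \le \|r^{(i)}\|_2 + \|r^{(j)}\|_2$, and the per-matrix robust property yields $\|(\Delta_e)_{S_e}\|_1 \le \tfrac18\|\Delta_e\|_1 + \sqrt{s'}\,(\|r^{(i)}\|_2+\|r^{(j)}\|_2)$. The edges incident to the root are handled by the same more delicate argument that produced the $\text{Deg}(1)$ factors in the noiseless case, now carrying an extra residual term.

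Summing these block estimates reproduces $\|h_S\|_1 \le \rho\|h_{S^c}\|_1 + \tau\|Ah\|_2$ with $\rho<1$ (the $\tfrac18$ coefficients leave room below the borderline), and the key new step is to aggregate the residuals efficiently. Each block $r^{(i)}$ appears in the differencing bound for the edge above node $i$ and for the edges of the children of $i$, i.e. with multiplicity at most $\text{Deg}(i)$, so $\sum_e (\|r^{(i)}\|_2 + \|r^{(j)}\|_2) \le \sum_i \text{Deg}(i)\|r^{(i)}\|_2 \le (\sum_i \text{Deg}(i)^2)^{1/2}\,\|Ah\|_2 \le \text{Deg}(G)\sqrt{n}\,\|Ah\|_2$ by Cauchy--Schwarz. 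Multiplying by the per-edge constant $\sqrt{s'}$ gives an edge contribution of order $\text{Deg}(G)\sqrt{ns'}\,\|Ah\|_2$, while the root contributes $\sqrt{s}\,\|Ah\|_2$; hence $\tau \lesssim \sqrt{s} + \text{Deg}(G)\sqrt{ns'}$. It is exactly this pooling that converts the naive $n\sqrt{s'}$ into $\sqrt{ns'}$. Finally I would invoke the standard recovery bound implied by the $\ell_1$ robust null space property \citep{foucart2013invitation}: since $h^\star$ is feasible and $h^\#$ minimises the objective, $\|h^\#\|_1 \le \|h^\star\|_1$ and $\|A(h^\#-h^\star)\|_2 \le 2\eta$, and as $h^\star$ is supported on $S$ one obtains $\|h^\#-h^\star\|_1 \lesssim \tfrac{\tau}{1-\rho}\,\eta \lesssim (\sqrt{s}+\text{Deg}(G)\sqrt{ns'})\eta$, which is the claim.

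I expect the main obstacle to be the robust block estimate for the edges adjacent to the root: there the block-row differencing does not isolate a single $A_v\Delta_e$ cleanly, so controlling $\|(\Delta_e)_{S_e}\|_1$ in terms of $\|x_1\|_1$, $\|\Delta_e\|_1$ and the residual requires the delicate $\text{Deg}(1)$-type argument, and achieving the contraction factor $\tfrac{1}{\text{Deg}(1)}$ forces each such matrix to satisfy a robust null space property with constant as small as $\Omega(1/\text{Deg}(1))$, hence $N \gtrsim \text{Deg}(1)^2 s' = O(n^2 s')$ samples; a union bound over the $O(n)$ concentration events then yields the stated $N_{\text{Non-root}} \gtrsim n^2 s' (\log d + \log(n/\epsilon))$. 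A secondary difficulty is that Theorem \ref{thm:TVBPD} does not assume the non-root matrices are equal, so the clean differencing identity $A_v\Delta_e = r^{(i)}-r^{(j)}$ fails in general; as flagged at the end of Section \ref{sec:ProofSketch}, this case needs the alternative analysis, and I would verify that it degrades only the constants and not the $\sqrt{s}+\text{Deg}(G)\sqrt{ns'}$ scaling.
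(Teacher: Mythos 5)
Your overall architecture matches the paper's: reformulate \eqref{equ:DecentralisedDenoising} into a Basis Pursuit Denoising instance with the augmented matrix $A$ and support $S$ of Section \ref{sec:Proof:reformulated}, establish a Robust Null Space Property $\|h_S\|_1 \leq \rho\|h_{S^c}\|_1 + \tau\|Ah\|_2$, and invoke the standard recovery bound. Your residual-pooling step is also exactly the paper's: each node's residual appears in at most $\text{Deg}(G)$ edge bounds, and Cauchy--Schwarz over the $n$ nodes converts $\sum_{v}\|A_v x_v\|_2$ into $\sqrt{n}\big(\sum_v \|A_vx_v\|_2^2\big)^{1/2}$, which is where $\text{Deg}(G)\sqrt{n s^{\prime}}$ comes from.

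The genuine gap is in your per-edge estimate. For an edge $e$ not incident to the root you difference the two block rows to isolate $A_v\Delta_e = r^{(i)} - r^{(j)}$; this identity holds only when the two nodes share the same design matrix, and Theorem \ref{thm:TVBPD} assumes independent sub-Gaussian matrices $\widetilde{A}_v$ at every node, so the identity fails for every edge, not just those adjacent to the root. You flag this as a ``secondary difficulty'' to be fixed by verifying that ``only the constants degrade,'' but that verification is the substantive content of the paper's Lemma \ref{equ:RobustNullSpace:Decentralised}: without differencing, the paper bounds $\|(\Delta_e)_{S_e}\|_1 \leq \|(x_v)_{S_e}\|_1 + \|(x_w)_{S_e}\|_1$ and controls each term through the RIP of that node's own matrix applied to the \emph{full} vector $x_v = x_1 + \sum_{\widetilde e \in \pi(v)}\Delta_{\widetilde e}$, so every edge contributes $\tfrac{2\delta^{\text{Non-root}}_{2s^{\prime}}}{1-\delta^{\text{Non-root}}_{s^{\prime}}}\big(\|x_1\|_1 + \sum_{e}\|\Delta_e\|_1\big)$ rather than a contraction of $\|\Delta_e\|_1$ alone. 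Summing over the $n-1$ edges yields $\rho^{\prime} \asymp n\,\delta^{\text{Non-root}}_{2s^{\prime}}$, which forces $\delta^{\text{Non-root}}_{2s^{\prime}} \lesssim 1/n$ and hence $N_{\text{Non-root}} \gtrsim n^2 s^{\prime}$. This also shows your accounting of the sample complexity is wrong: you attribute the $n^2 s^{\prime}$ to a $\text{Deg}(1)^2$ contraction for root-adjacent edges, but $\text{Deg}(1)=1$ for a path while the theorem still demands $n^2 s^{\prime}$; the paper's noisy proof in fact treats all edges uniformly and has no $\text{Deg}(1)$-specific argument. So the structure of the bound, not merely its constants, changes once the matrices differ, and your proposal does not supply the argument that closes this.
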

Theorem \ref{thm:TVBPD} gives conditions on the sample size so a bound on the $\ell_1$ estimation error can be achieved. For the stepwise approach in Section \ref{sec:Noiseless:Setup} the $\ell_1$ estimation error scales as $\sqrt{s} + n \times \sqrt{s^{\prime}}$. Therefore, TVBPD yields an order $\sqrt{n}$ saving in $\ell_1$ estimation error over the step wise approach. This highlights two sample size regimes. When the total sample size is $O(s + n s^{\prime})$, the step wise approach is provably feasible and the estimation error is $O(\sqrt{s} + n \sqrt{s^{\prime}})$. Meanwhile, when the total sample size is $O(s + n^{3} s^{\prime})$, TVBPD is provably feasible and the estimation is $O(\sqrt{s} + \sqrt{n s^{\prime}})$. The gap in sample size requirements between the noisy case with TVBPD $O(s + n^3 s^{\prime})$ and the noiseless  case with TVBP $O(s + n^2\text{Diam}(G)s^{\prime})$ is due to a different proof technique for TVBPD. We leave extending the techniques for analysing TVBP to the case of TVBPD to future work.

\vspace{-0.1cm}
\subsection{Experiments for Total Variation Basis Pursuit Denoising}
\label{sec:NoisySimulated}
This section present simulation results for the Total Variation Basis Pursuit Denoising problem \eqref{equ:DecentralisedDenoising}. The following paragraphs, respectively, describe results for synthetic and real data.

\textbf{Synthetic Data.} Figure \ref{fig:GroupLasso1} plots the $\ell_1$ estimation error for Total Variation Basis Pursuit Denoising, group Lasso and the Dirty Model of \citep{jalali2010dirty}, against the number of agents for both path and balanced tree topologies. As the number of agents grows, the estimation error for the group Lasso methods grows quicker than the total variation approach. The group Lasso variants perform poorly here due to the union of supports growing with the number of agents, and thus, the small overlap between agent's supports. 

\begin{figure}[!h]
\centering
\includegraphics[width = \picwidth\textwidth]{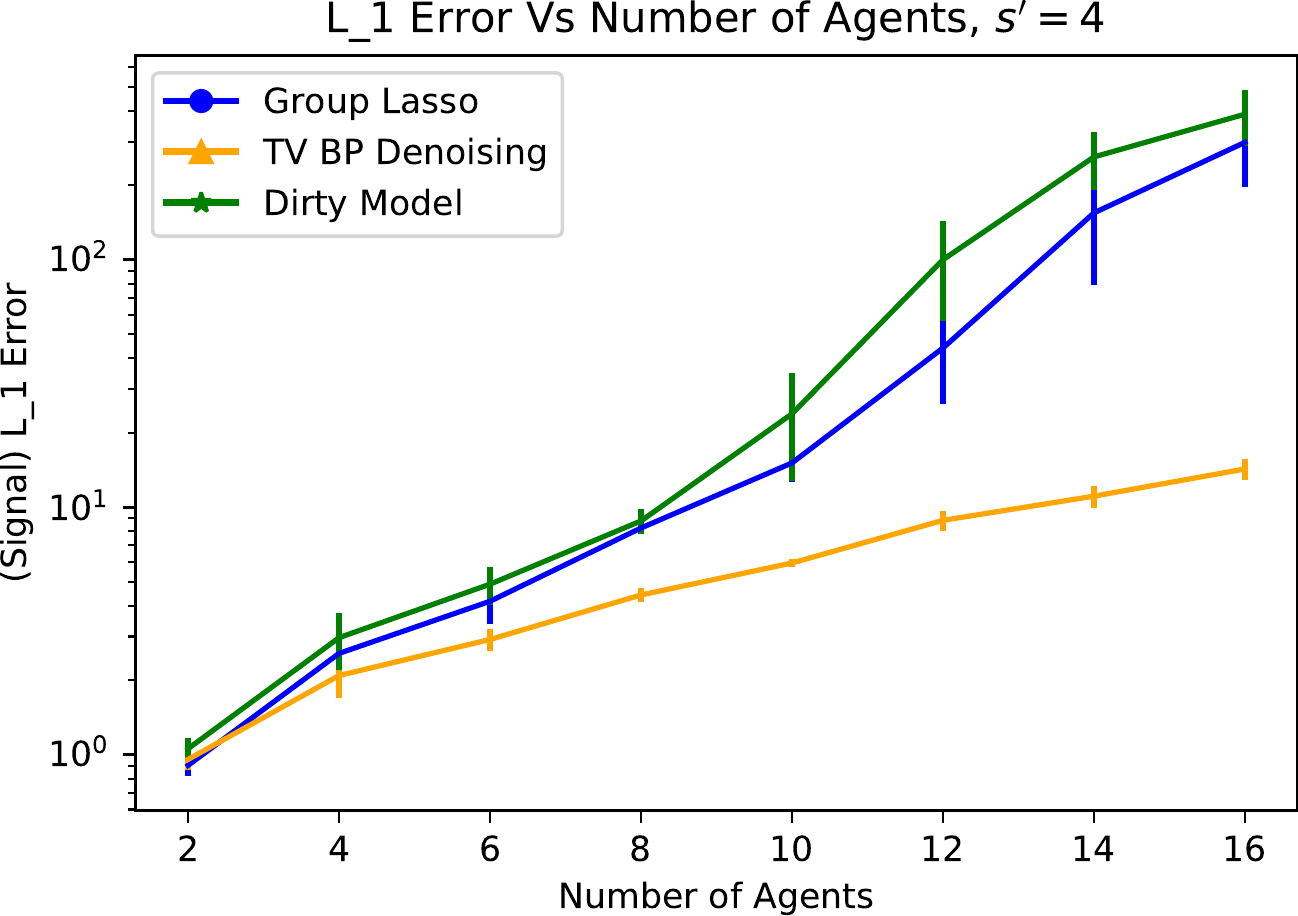}
\hspace{0.2cm}
\includegraphics[width = \picwidth\textwidth]{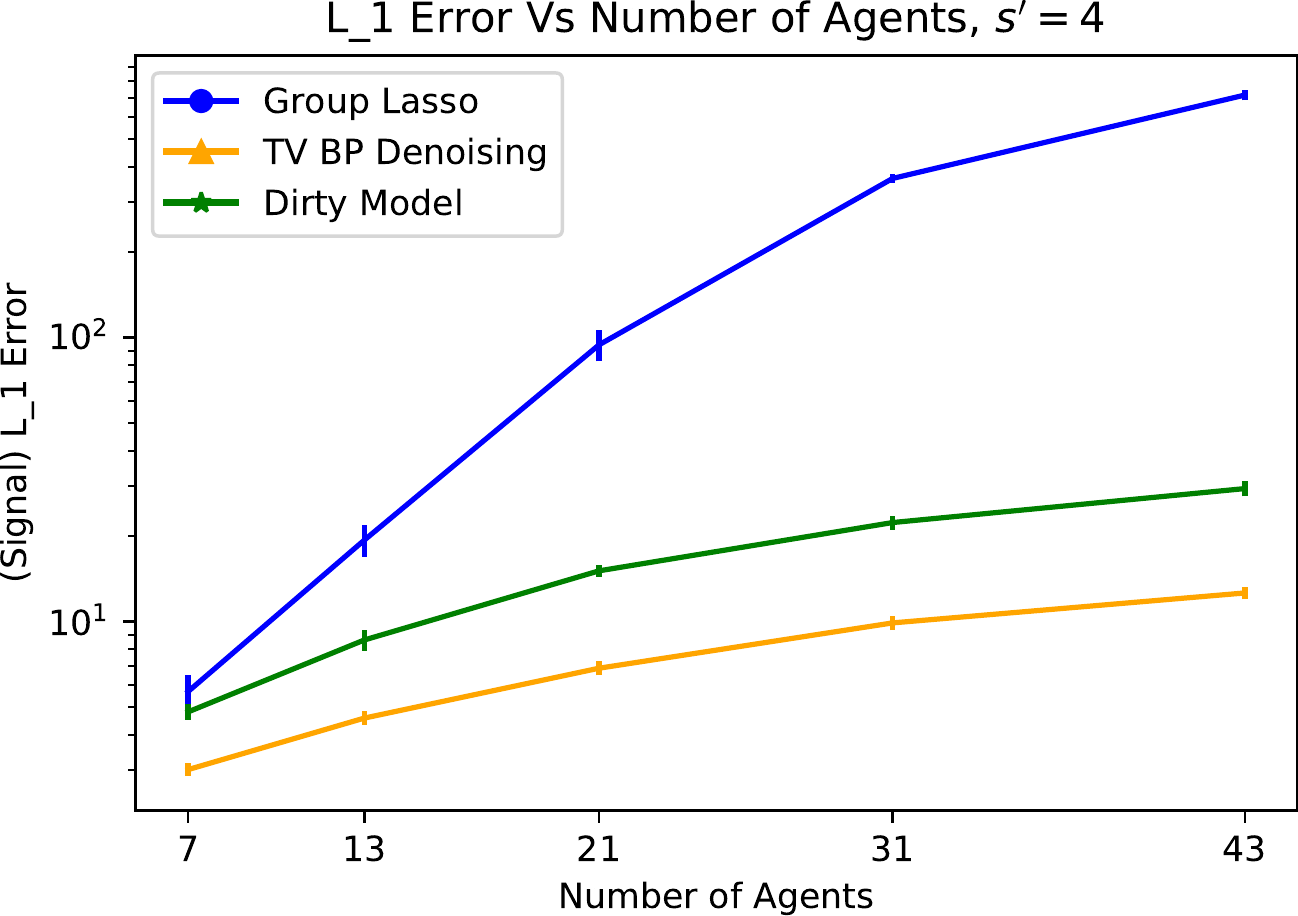}
\caption{ 
$\ell_1$ estimation error $\sum_{v \in V} \|x_v - x^{\star}_v\|_1$ ($\log_{10}$ scale) against number of agents for Total Variation Basis Pursuit Denoising solved using SPGL1 Python package (\textit{Yellow}), group Lasso (\textit{blue}) and Dirty Model of \citep{jalali2010dirty} (\textit{Green}). \textit{Left}: Path topology. \textit{Right}: Balanced tree topology height $2$ branching rate $\{2,3,4,5,6\}$. The same i.i.d.\ standard Gaussian matrix was associated to each node with $N_v = 200$ for $v \in V$,  with parameters were $d=2^{9}$, $s = 25$  and $s^{\prime} = 4$. Signal at the root $x_1^{\star}$ and differences  $\{x^{\star}_v - x{\star}_w \}_{(v,w) \in E}$ sampled from $\{+1,-1\}$ with no overlap in supports. 
}
\label{fig:GroupLasso1}
\end{figure}

\textbf{Hyperspectral Unmixing.}  
We apply Total Variation Basis Pursuit Denoising to the popular AVIRIS Cuprite mine reflectance dataset \url{https://aviris.jpl.nasa.gov/data/free_data.html} with a subset of the USGS library splib07 \citep{kokaly2017usgs}. As signals can be associated to pixels in a 2-dimensional image, it is natural to consider the total variation associated with a grid topology. Computing the total variation explicitly in this case can be computationally expensive, see for instance  \citep{padilla2017dfs}. We therefore simplify the objective by tilling the image into groups of $n=4$ pixels arranged in a 2x2 grid, with each group considered independently. This is common approach within parallel rendering techniques, see for instance \citep{molnar1994sorting}, and is justified in our case as the signals are likely most strongly correlated with their neighbours in the graph.  Note that this also allows our approach to scale to larger images as the algorithm can be run on each tile in an embarrassingly parallel manner.  
More details of the experiment are in Appendix \ref{sec:AVIRIS:Extra}. 

We considered four methods: applying Basis Pursuit Denoising to each pixel independently; Total Variation Denoising \eqref{equ:DecentralisedDenoising} applied to the groups of 4 pixels as described previously; the group Lasso applied to the groups of $4$ pixels described previously.; and a baseline Hyperspectral algorithm SUNnSAL \citep{bioucas2010alternating}. Figure \ref{fig:AVIRIS_small} then gives plots of the coefficients associated to two minerals for three of the methods. Additional plots associated to four minerals and the four methods have been   Figure \ref{fig:AVIRIS:1} Appendix \ref{sec:AVIRIS:Extra}. Recall, by combining pixels the aim is to estimate more accurate coefficients than from denoising them independently. 
Indeed for the Hematite, Andradite and Polyhalite minerals, less noise is present for the total variation approach, alongside larger and brighter clusters. This is also in comparison to SUNnSAL, where the images for Andradite and Polyhalite from the total variation approach have less noise and brighter clusters. Although, we note that combining groups of pixels in this manner can cause the images to appear at a lower resolution. 
\begin{figure}[!h]
\centering
\includegraphics[width = 0.19\textwidth]{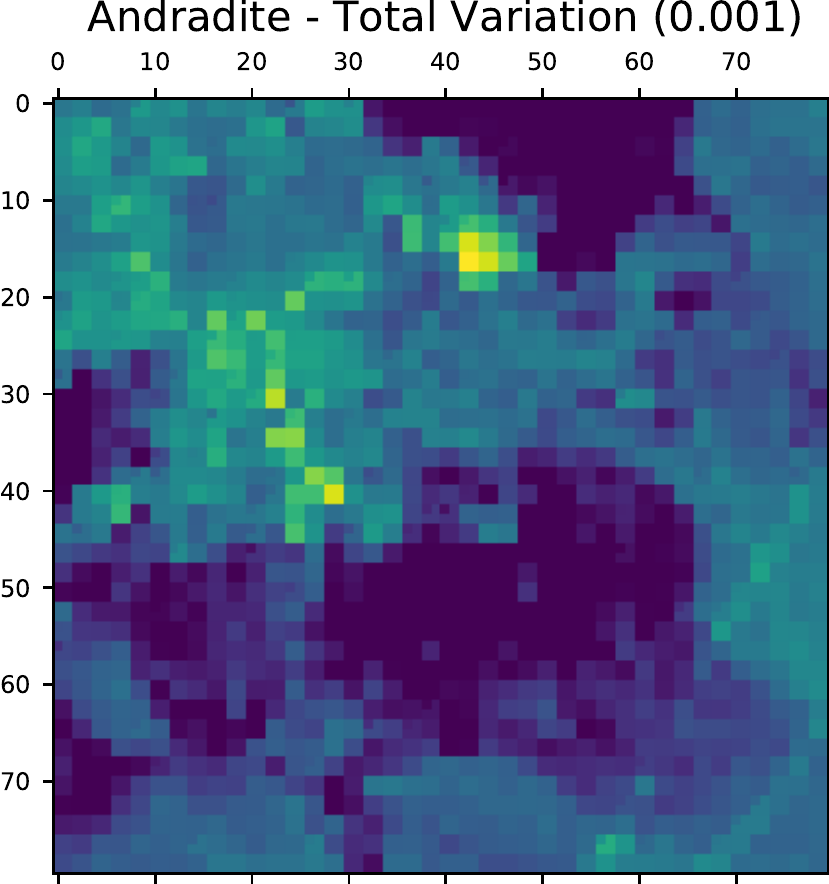}
\includegraphics[width = 0.19\textwidth]{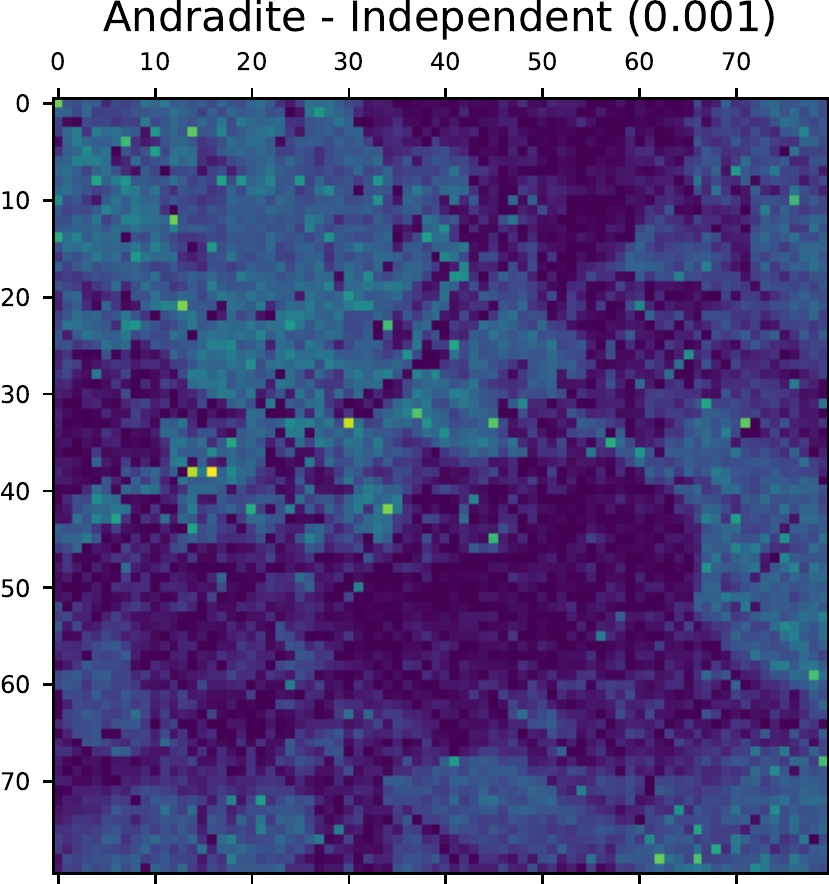}
\includegraphics[width = 0.19\textwidth]{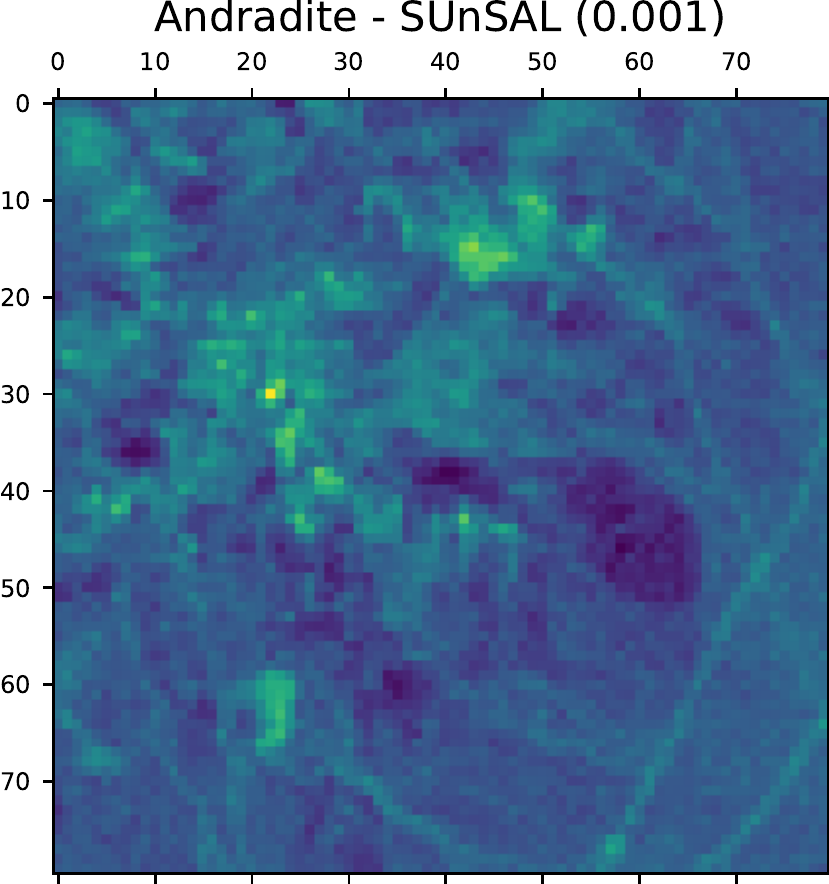}\\
\includegraphics[width = 0.19\textwidth]{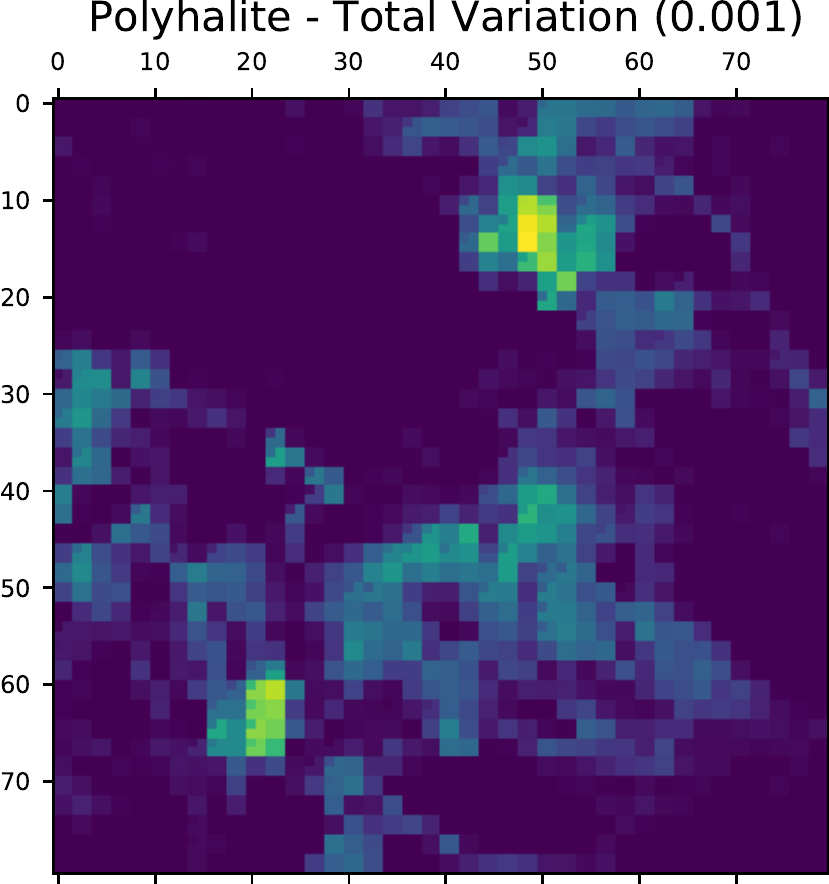}
\includegraphics[width = 0.19\textwidth]{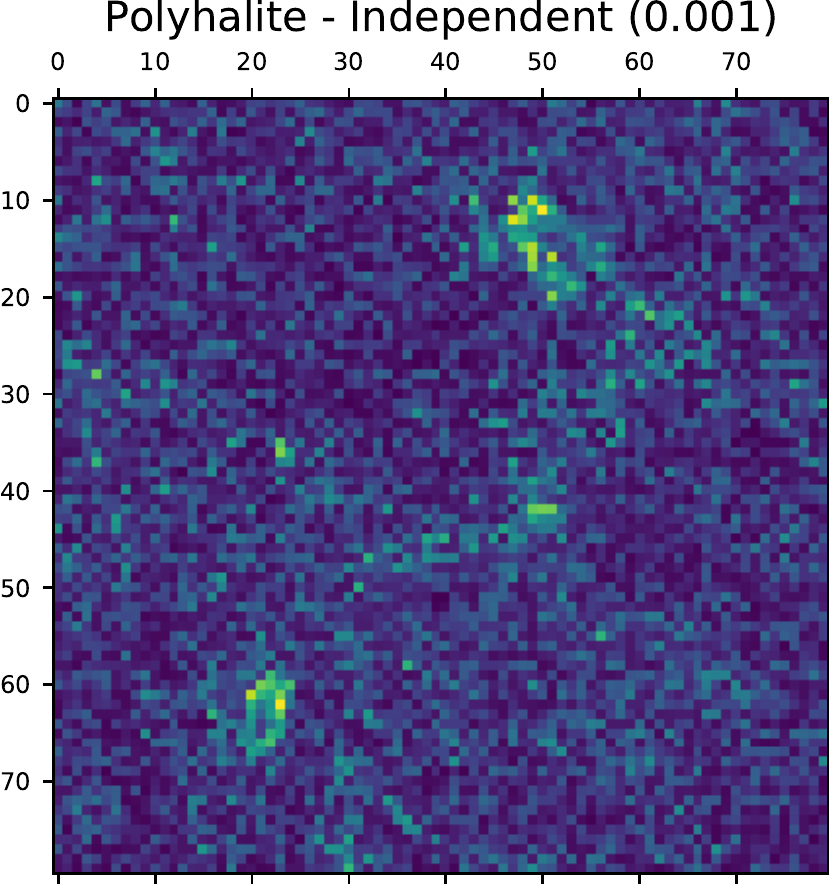}
\includegraphics[width = 0.19\textwidth]{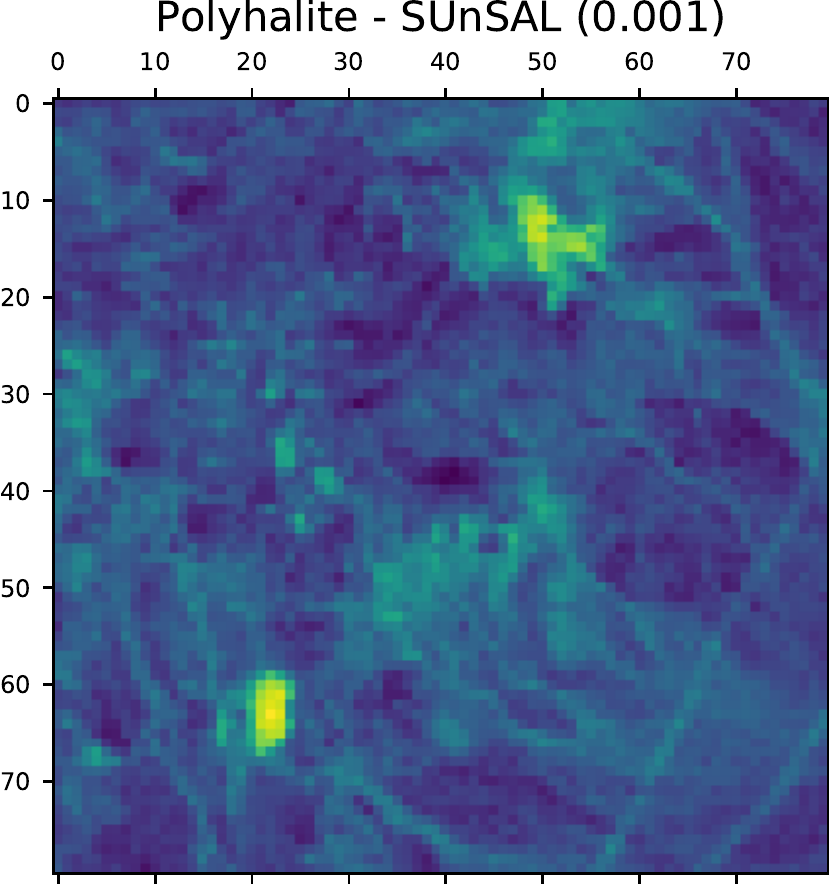}
\caption{Coefficients associated to the minerals Andradite (\textit{top}), and Polyhalite (\textit{bottom}). Methods are, \textit{left}: Total Variation Basis Pursuit Denoising applied to 2x2 pixel tiles with $\eta  = 0.001$; \textit{Middle}: Basis Pursuit Denoising applied independently to each pixel with $\eta = 0.001$. \textit{right}: SUNnSAL with regularisation of $0.001$. Yellow pixels indicate higher values. }
\label{fig:AVIRIS_small}
\vspace{-0.7cm}
\end{figure}

\section{Conclusion}
In this work we investigated total variation penalty methods to jointly learn a collection of sparse linear models over heterogeneous data. We assumed a graph-based sparse structure for the signals, where the signal at the root and the signal differences along edges are sparse. This setting differs from previous work on solving collections of sparse problems, which assume large overlapping supports between signals. We demonstrated (in noiseless and noisy settings) that statistical savings can be achieved over group Lasso methods as well as solving each problem independently, in addition to developing a distributed ADMM algorithm for solving the objective function in the noiseless case. 

The theoretical results currently suggest having identical matrices for non-root agents is more sample efficient over having different matrices (Table \ref{tab:samples:TVBP} and proof sketch Section \ref{sec:ProofSketch}). A natural direction is to investigate whether this is a limitation of the analysis, or fundamental to using the Total Variation penalty. Following this work, a distributed ADMM algorithm can also be developed for Total Variation Basis Pursuit Denoising optimisation problem \eqref{equ:DecentralisedDenoising}.

\section*{Acknowledgements}
D.R. is supported by the EPSRC and MRC through the
OxWaSP CDT programme (EP/L016710/1), and the London Mathematical Society ECF-1920-61. S.N. was supported by NSF DMS 1723128. P.R. was supported in part by the Alan Turing Institute under the EPSRC grant EP/N510129/1.

\bibliographystyle{plain}
\bibliography{References}

\appendix
\onecolumn

\section{Additional Material - Noiseless setting}
In this section we present additional material associated to the noiseless setting. 
Section \ref{sec:RefinedResults} presents refined sample complexity results when additional assumptions are placed on the graph $\widetilde{G}$ or the design matrices $\{A_v\}_{v \in V \backslash \{1\}}$.
Section \ref{sec:Simulations:Decentralised} presents details related to the distributed ADMM algorithm presented within Section \ref{sec:Noiseless:SampleComplexity} of the main body.  

\subsection{Refined Theoretical Results for Total Variation Basis Pursuit} 
\label{sec:RefinedResults}
In this section we present refined results for Total Variation Basis Pursuit. 
Section \ref{sec:Starlike} consider the case of a tree graph with different design matrices at each node. Section \ref{sec:MatchingNonRoot} considers the case of a tree graph with identical design matrices at non-root agents. 

\subsubsection{Total Variation Basis Pursuit with known Tree Graph }
\label{sec:Starlike}
Let us now consider the case where $G$ is a known tree graph. The sample complexity in this case is summarised within the follow Theorem. 
\begin{theorem}
\label{thm:Starlike}
Suppose $G$ is a tree graph, the signals $\{x^{\star}_{v}\}_{v \in V}$ are $(G,s,s^{\prime})$-sparse and matrices satisfy $A_v = \frac{1}{\sqrt{N_{v}}} \widetilde{A}_v$ where $\{\widetilde{A}_v\}_{v \in V}$ have i.i.d.\ sub-Gaussian entries. Fix $\epsilon > 0$. If
\begin{align*}
    N_{\text{Root}} & \gtrsim 
    \max\{s, n^2s^{\prime}  \} \big( \log(d) + \log(1/\epsilon) \big) 
    \text{ and }\\
    N_{\text{Non-root}} & \gtrsim \max\big\{ n ,\text{Deg}(V \backslash \{1\})^2 \text{Diam}(G)^2 \big\}s^{\prime} \big(   \log(d) + \log(n/\epsilon) \big)
\end{align*}
then with probability greater than $1-\epsilon$ the solution $\{x^{TVBP}_{v}\}_{v \in V}$ with $G = \widetilde{G}$ is unique and satisfies $x^{TVBP}_{v} = x^{\star}_{v}$ for all $v \in V$.
\end{theorem}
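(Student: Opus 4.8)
The plan is to follow the two-step template of the proof sketch for Theorem~\ref{thm:GeneralTree} — reduce TVBP to a single augmented Basis Pursuit problem \eqref{equ:BasisPursuit} and then verify the Restricted Null Space Property \eqref{equ:RSP} — but now exploiting that $\widetilde{G}=G$ is the \emph{true} tree, so that the augmented difference variables correspond exactly to the $s'$-sparse edge differences (rather than blowing up to $\text{Diam}(G)s'$ as in the star reformulation behind Theorem~\ref{thm:GeneralTree}). Concretely, I would introduce the root variable $x_1$ together with one difference variable $\Delta_e$ per edge $e\in E$; since $G$ is a tree, every node signal is $x_v = x_1 + \sum_{e\in\text{path}(1,v)}\Delta_e$, so each constraint $A_v x_v = y_v$ is linear in $(x_1,\{\Delta_e\}_{e\in E})$. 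This yields a standard Basis Pursuit problem with augmented matrix $A$, true signal $x^{\star}=(x_1^{\star},\{\Delta_e^{\star}\})$ where $\Delta_e^{\star}=x_v^{\star}-x_w^{\star}$ is supported on $S_e$ with $|S_e|\le s'$, and support $S = S_1 \cup \bigcup_{e\in E}(S_e+\text{offset}_e)$ placed in disjoint blocks. Recovering all $\{x_v^{\star}\}_{v\in V}$ is equivalent to recovering this augmented signal, so it suffices to establish $2\|(x)_S\|_1 \le \|x\|_1$ for every nonzero $x\in\text{Ker}(A)$.

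Next I would characterize the kernel: $x=(x_1,\{\Delta_e\})\in\text{Ker}(A)$ if and only if $A_v x_v = 0$ for all $v\in V$ with $x_v = x_1 + \sum_{e\in\text{path}(1,v)}\Delta_e$; in particular $x_1\in\text{Ker}(A_1)$ and $x_v\in\text{Ker}(A_v)$ for each $v$. The root term is handled directly by the sub-Gaussian RNSP bound quoted after \eqref{equ:RSP}: taking $N_{\text{Root}}\gtrsim\max\{s,n^2 s'\}\log(\cdot)$ ensures that $A_1$ both recovers $x_1$ on $S_1$ at a constant RNSP level (the $s$ term) and delivers the small-constant, order-$1/n$ control of the root-incident edge supports $S_e$ that the recursion needs at the top level (the $n^2 s'$ term, with $n^2=c^{-2}$).

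For the edge differences I would argue recursively down the tree. For an edge $e=(v,w)$ with $w$ the parent, the relation $A_v x_v = 0$ reads $A_v(x_w+\Delta_e)=0$, so $\Delta_e = x_v - x_w$ with $x_v\in\text{Ker}(A_v)$; bounding $\|(\Delta_e)_{S_e}\|_1 \le \|(x_v)_{S_e}\|_1 + \|(x_w)_{S_e}\|_1$ and applying the RNSP bound to $A_v$ and $A_w$ with respect to support sets that already contain $S_e$ controls each difference through the endpoint signal norms $\|x_v\|_1 \le \|x_1\|_1 + \sum_{e'\in\text{path}(1,v)}\|\Delta_{e'}\|_1$. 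Processing nodes in order of increasing distance from the root, level-$i$ contributions are expressed through already-bounded level-$(i-1)$ quantities, so the recursion runs over at most $\text{Diam}(G)$ levels and, at each node, over its $\le\text{Deg}(v)$ incident edges whose supports $\bigcup_{e\ni v}S_e$ (size $\le\text{Deg}(v)s'$) the matrix $A_v$ must simultaneously handle. Choosing the per-node RNSP constants small enough for this recursion to close — while never re-controlling $S_1$ at non-root nodes — is what produces the $\max\{n,\text{Deg}(V\setminus\{1\})^2\text{Diam}(G)^2\}s'$ requirement on $N_{\text{Non-root}}$, the squares coming from the $c^{-2}$ scaling of the sub-Gaussian bound. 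Summing all contributions so the total is at most $\tfrac12\|x\|_1$ verifies \eqref{equ:RSP}, and a union bound over the $n$ design matrices (each exceptional event of probability $\le\epsilon/n$, giving the $\log(n/\epsilon)$) yields the claim with probability $\ge 1-\epsilon$.

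The main obstacle is precisely this edge step. With distinct non-root matrices the clean cancellation $\widetilde{A}\Delta_e=0$ available in the equal-matrix refinement (Theorem~\ref{thm:IdenticalMatrices}) is lost, since subtracting the constraints at adjacent nodes no longer eliminates $x_1$ and the upstream differences. Each $\Delta_e$ must therefore be controlled through its two endpoint signals, which lie in different kernels and each accumulate every difference along their root-path. Making the path-length ($\text{Diam}$) and degree ($\text{Deg}$) accumulations close simultaneously, while keeping the per-node sample size at the stated level and not reintroducing an $s$-dependence off the root, is the delicate part and is the source of the extra $\text{Deg}^2\text{Diam}^2$ overhead.
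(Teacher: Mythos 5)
Your high-level template (reformulate into an augmented Basis Pursuit problem, verify the Restricted Null Space Property, invoke sub-Gaussian RIP concentration with a union bound over the $n$ matrices) matches the paper, and you correctly identify the crux: with distinct non-root matrices the cancellation $\widetilde{A}\Delta_e=0$ of Theorem~\ref{thm:IdenticalMatrices} is unavailable. But your resolution of that crux does not deliver the stated sample complexity. In your edge step you bound $\|(\Delta_e)_{S_e}\|_1 \le \|(x_v)_{S_e}\|_1+\|(x_w)_{S_e}\|_1$ and then control each endpoint via $x_v\in\text{Ker}(A_v)$ and $\|x_v\|_1\le\|x_1\|_1+\sum_{e'\in\pi(v)}\|\Delta_{e'}\|_1$. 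Summing over all $n-1$ edges, the coefficient of $\|x_1\|_1$ is of order $n\,\delta^{\text{Non-root}}_{2s'}$, so closing the recursion forces $\delta^{\text{Non-root}}_{2s'}\lesssim 1/n$ and hence $N_{\text{Non-root}}\gtrsim n^2 s'$. That is exactly the argument the paper uses for the \emph{noisy} case (Lemma~\ref{equ:RobustNullSpace:Decentralised}, which indeed requires $N_{\text{Non-root}}\gtrsim n^2 s'$), and it is strictly weaker than the claimed $\max\{n,\text{Deg}(V\backslash\{1\})^2\text{Diam}(G)^2\}s'$ whenever $\text{Deg}^2\text{Diam}^2=o(n^2)$, e.g.\ for stars or balanced trees.

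The missing idea is that the paper never folds $\|x_1\|_1$ into the endpoint norms at non-root nodes. Writing $\Delta_{\widetilde e}$ as a difference of the pure path sums $\sum_{e\in\pi(w)}\Delta_e$ (without $x_1$), the shelling argument against $A_w$ leaves a residual term $\|A_w x_1\|_2$ rather than a multiple of $\|x_1\|_1$. The path-sum part alone accumulates to $2\,\text{Deg}(V\backslash\{1\})\,\text{Diam}(G)\sum_e\|\Delta_e\|_1$ (your $\text{Deg}^2\text{Diam}^2$ intuition is right for this piece), while the aggregate $\sum_{v\neq 1}\|A_v x_1\|_2$ is handled separately: Cauchy--Schwarz converts it to $\sqrt{n-1}\,\|A_{\text{Combined}}x_1\|_2$ for the row-wise concatenation $A_{\text{Combined}}$ of all non-root matrices, and since $x_1\in\text{Ker}(A_1)$ is ``flat'' ($\|x_1\|_2\lesssim\|x_1\|_1/\sqrt{\ell}$ once $A_1$ has RIP at level $\ell\asymp n^2 s'$), Lemma~\ref{lem:balls} and the RIP of $A_{\text{Combined}}$ at level $\ell$ give $\|A_{\text{Combined}}x_1\|_2\lesssim\|x_1\|_1/\sqrt{\ell}$. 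Pooling the $(n-1)N_{\text{Non-root}}$ rows is what makes $N_{\text{Non-root}}\gtrsim n s'$ (not $n^2 s'$) suffice for this term, and the flatness requirement is the true origin of the $n^2 s'$ term in $N_{\text{Root}}$ --- not, as you suggest, an order-$1/n$ control of the root-incident edge supports, of which there are only $\text{Deg}(1)$.
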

\begin{proof}
    See Appendix \ref{sec:proof:starlike}. 
\end{proof}
The sample complexity listed within the second row of Table \ref{tab:samples:TVBP} is then arrived at by simply summing up the above bound to arrive at $N_{\text{Total Samples}} = O(s + \max\{n^2 s^{\prime},n \text{Deg}(V \backslash \{1\})^2\text{Diam}(G)^2\}s^{\prime})$. We then note that Theorem \ref{thm:GeneralTree} within the main body of the work is a direct consequence of Theorem \ref{thm:Starlike}. We formally presented these steps within the following proof. 

\begin{proof}[Proof of Theorem \ref{thm:GeneralTree}]
    Let us consider a signal $\{x^{\star}_v\}_{v \in V}$ that is $(G,s,s^{\prime})-$sparse with respect to a general graph $G$. We then see that the signal is then $(\widetilde{G},s,\text{Diam}(G)s^{\prime})$-sparse when $\widetilde{G}$ is a star topology. Using Theorem \ref{thm:Starlike} there after (swapping $G$ for $\widetilde{G}$ and $s^{\prime}$ for $\text{Diam}(G) s^{\prime}$) and noting that both $\text{Diam}(\widetilde{G}) = 1$ and the degree of the non-root agents is $\text{Deg}(V \backslash \{1\}) =1 $ yields the result. 
\end{proof}

\subsubsection{Tree Graph with Identical Matrices for Non-root Agents}
\label{sec:MatchingNonRoot}
Let us now consider when non-root agents have the \emph{same} sensing matrices  i.e $A_v = A_w$ for $v,w \in V \backslash \{1\}$. The result is then summarised within the follow Theorem \ref{thm:IdenticalMatrices}. 
\begin{theorem}
\label{thm:IdenticalMatrices}
Suppose $G$ is a tree graph, $\{x^{\star}_{v}\}_{v \in V}$ is $(G,s,s^{\prime})$-sparse, $A_v = \frac{1}{\sqrt{N_{\text{Non-root}}}}A_{\text{Non-root}}$ for $v \not= 1$ and $A_1 = \frac{1}{\sqrt{N_1}} A_{Root}$. Assume that $A_{\text{Root}},A_{\text{Non-root}}$ each have i.i.d.\ sub-Gaussian entries. Fix $\epsilon >0$. If 
\begin{align*}
    N_{\text{Root}} & \gtrsim \max\{s,\text{Deg}(1)^2 s^{\prime} \} 
    \big( \log(d) + \log(1/\epsilon) \big) \text{ and } \\
    N_{\text{Non-root}} & \gtrsim \text{Deg}(1)^2 s^{\prime}\big( \log(d) + \log(1/\epsilon) \big)
\end{align*}
then with probability greater than $1-\epsilon$ the solution to TVBP with $\widetilde{G}=G$ is unique and satisfies $x^{TVBP}_{v} = x^{\star}_{v}$ for all $v \in V$. 
\end{theorem}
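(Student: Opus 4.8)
The plan is to follow the two-step strategy of the proof sketch in Section~\ref{sec:ProofSketch}, reducing exact recovery to the \emph{Restricted Null Space Property} (RSP) \eqref{equ:RSP} and exploiting that all non-root nodes now share one matrix $\widetilde A$. First I would rewrite TVBP \eqref{equ:TVBP} with $\widetilde G = G$ as a single Basis Pursuit problem \eqref{equ:BasisPursuit} in the stacked variable $x = (x_1, \Delta_1, \dots, \Delta_{n-1})$, where $\Delta_e$ denotes the difference along edge $e$, using the block matrix $A$ displayed in the sketch. The augmented target $x^\star$ is then supported on a set $S$ built from $S_1 = \text{Supp}(x_1^\star)$ (one block) and the edge supports $S_e = \text{Supp}(x_v^\star - x_w^\star)$ (one block per edge), with $|S_1| \le s$ and $|S_e| \le s'$. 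By the characterisation recalled after \eqref{equ:RSP}, it suffices to prove $\|(x)_S\|_1 \le \tfrac12\|x\|_1$ for every $x \in \text{Ker}(A)\setminus\{0\}$; taking the constants with a little slack makes this inequality strict, which in turn gives uniqueness of the minimiser.

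Second, I would read off the kernel constraints. Because $A_v = \widetilde A/\sqrt{N_{\text{Non-root}}}$ is identical at every non-root node, the equation at node $v$ becomes $\widetilde A\big(x_1 + \sum_{e \in P_v}\Delta_e\big) = 0$, where $P_v$ is the set of edges on the path from $v$ to the root, while the root equation is $A_1 x_1 = 0$. Subtracting the equations of a child and its parent, exactly as in the sketch, collapses the constraint along any edge $e=(v,w)$ with $1 \notin e$ to $\widetilde A\Delta_e = 0$; for the $\text{Deg}(1)$ edges $e = (1,w)$ incident to the root the constraint does not decouple and reads $\widetilde A(x_1 + \Delta_e) = 0$. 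I would then split $\|(x)_S\|_1 = \|(x_1)_{S_1}\|_1 + \sum_{1 \notin e}\|(\Delta_e)_{S_e}\|_1 + \sum_{1 \in e}\|(\Delta_e)_{S_e}\|_1$ and bound each group.

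For the first two groups the standard sub-Gaussian RSP bound quoted after \eqref{equ:RSP}, applied to $A_1$ at scale $s$ with constant $\tfrac14$ and to $\widetilde A$ at scale $s'$ with constant $\tfrac14$, gives $\|(x_1)_{S_1}\|_1 \le \tfrac14\|x_1\|_1$ and $\|(\Delta_e)_{S_e}\|_1 \le \tfrac14\|\Delta_e\|_1$, requiring only $N_{\text{Root}} \gtrsim s$ and $N_{\text{Non-root}} \gtrsim s'$. The root-incident edges are the crux. Here I would invoke the \emph{strong} RSP at scale $s'$ but with the small constant $c = \Theta(1/\text{Deg}(1))$; since $N \gtrsim s'\,c^{-2}$ this is exactly what forces the $\text{Deg}(1)^2 s'$ sample requirement on both $A_1$ and $\widetilde A$. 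Writing $z := x_1 + \Delta_e \in \text{Ker}(\widetilde A)$ so that $\Delta_e = z - x_1$, the triangle inequality together with the strong RSP of $\widetilde A$ applied to $z$ and of $A_1$ applied to $x_1$ (recall $A_1 x_1 = 0$) yields $\|(\Delta_e)_{S_e}\|_1 \lesssim \tfrac{1}{\text{Deg}(1)}\big(\|x_1\|_1 + \|\Delta_e\|_1\big)$. Summing over the $\text{Deg}(1)$ root edges contributes at most $\tfrac14\|x_1\|_1 + \tfrac14\sum_{1 \in e}\|\Delta_e\|_1$, and combining all three groups gives $\|(x)_S\|_1 \le \tfrac12\|x_1\|_1 + \tfrac14\sum_e\|\Delta_e\|_1 \le \tfrac12\|x\|_1$, as needed; collecting the sample conditions yields $N_{\text{Root}} \gtrsim \max\{s, \text{Deg}(1)^2 s'\}$ and $N_{\text{Non-root}} \gtrsim \text{Deg}(1)^2 s'$.

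The final bookkeeping is the union bound: because there is only one non-root matrix $\widetilde A$ and one root matrix $A_1$, the whole argument invokes a constant number of RSP events, so the probability budget costs only $\log(1/\epsilon)$ and --- crucially --- no $\log n$ factor, which is the quantitative payoff of identical non-root matrices relative to Theorem~\ref{thm:Starlike}. The main obstacle is precisely the root-incident edges: unlike interior edges, their kernel constraint couples $x_1$ and $\Delta_e$ and cannot be reduced to $\widetilde A\Delta_e = 0$, so controlling $\|(\Delta_e)_{S_e}\|_1$ requires the sharper RSP constant $\sim 1/\text{Deg}(1)$, and this is what produces the $\text{Deg}(1)^2$ dependence in the sample complexity.
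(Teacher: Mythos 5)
Your proposal is correct and follows essentially the same route as the paper's proof: reformulate TVBP as a single Basis Pursuit problem, reduce to the Restricted Null Space Property, exploit that identical non-root matrices collapse the kernel constraint on interior edges to $\widetilde A\Delta_e = 0$, and handle the root-incident edges by the add-and-subtract decomposition $\Delta_e = (x_1+\Delta_e) - x_1$ with the sharper constant of order $1/\text{Deg}(1)$, which is exactly what produces the $\text{Deg}(1)^2 s^{\prime}$ sample requirement and the $\log(1/\epsilon)$ (rather than $\log(n/\epsilon)$) union-bound cost. The only cosmetic difference is that the paper routes the null-space bounds through explicit Restricted Isometry constants and a shelling lemma, whereas you invoke the equivalent sub-Gaussian null-space fact directly.
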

\begin{proof}
    See Appendix \ref{sec:EqualMatrices}. 
\end{proof}
The entry within the third row of Table \ref{tab:samples:TVBP} is then arrived at by summing up the above bound to arive at $N_{\text{Total Samples}} = O( s + n \text{Deg}(1)^2 s^{\prime})$.

\subsection{Distributed ADMM Algorithm}
\label{sec:Simulations:Decentralised}
In this section present the Distributed ADMM algorithm for solving the Total Variation Basis Pursuit problem \eqref{equ:TVBP}. We begin by reformulating the problem into an consensus optimisation form. 
Specifically, with $\Delta_{e} = x_v - x_w$ for $e = \{v,w\} \in E$, we consider 
\begin{align*}
& \min_{x_v, v \in V} \|x_1\|_1  + \sum_{e \in V} \|\Delta_e\|_1 \text{ subject to } \\
 & A_v x_v = Y_v \text{ for all } v \in V  \text{ and }  x_v - x_w = \Delta_e \text{ for all } e = \{v,w\} \in E.
\end{align*}
We then  propose the Alternating Direction Method of Multipliers (ADMM) to solve the above.  The key step is consider the augmented Lagrangian from dualizing the consensus constraint which, with $\|x\|_1 = \|x_1\|_1 + \sum_{e \in E} \|\Delta_{e}\|_1$, is  for $\rho > 0$
\begin{align*}
&\mathcal{L}_{\rho}(\{x_v\}_{v \in V}, \!\{\Delta_e\}_{e \in E}, \{\gamma_{e}\}_{e \in E} ) 
 \!\! = \!\!
\|x\|_1 +  \!\!\!
\sum_{e=\{v,w\} \in E} \frac{\rho}{2} \|x_v-x_w - \Delta_e\|_2 + \langle \gamma_e,x_v-x_w - \Delta_e\rangle.
\end{align*}
The ADMM algorithm then proceeds to minimise $\mathcal{L}_{\rho}$ with respect to $\{ x_v \}_{v \in V}$, then $\{\Delta_{e}\}_{e \in E}$, followed by a ascent step in the dual variable $\{\gamma_{e} \}_{e \in E}$. Full details of the ADMM updates have been given in Appendix \ref{ADMM:Updates}. Each step can be computed in closed form, expect for the update for $x_1$ which requires solving a basis pursuit problem with an $\ell_2$ term in the objective. This can be solved to a high precision efficiently by utilising a simple dual method, see \citep[Appendix B]{mota2011distributed}. The additional computational required by the root node in this case aligns with the framework we consider, since we assume the root node also has an additional number of samples $N_1$. 

The theoretical convergence guarantees of ADMM have gained much attention lately due to the wide applicability of ADMM to distributed optimisation problems \citep{boyd2011distributed,he20121,hong2017linear}. While a full investigation of the convergence guarantees of ADMM in this instance is outside the scope of this work, we note for convex objectives with proximal gradient steps computed exactly, ADMM has been shown to converge at worst case a polynomial rate of order $1/t$ \citep{he20121}. A number of works have shown linear convergence under additional assumptions which include full column rank on the constraints or strong convexity, which are not satisfied in our case \footnote{
The constraint dualised by ADMM, $x_v - x_w = \Delta_e$ for $e =\{v,w\} \in E$, can be denoted in terms the signed incident matrix of the graph. This is a linear constraint, but the signed incident matrix does not have full column rank.
}. Although, if one considers a proximal variant of ADMM with an additional smoothing term, linear convergence can be shown in the absence of the column rank constraint \citep{hong2017linear}. The convergence of ADMM can be sensitive hyperparameter choice $\rho$, which has motivated a number of adaptive schemes, see for instance \citep{he2000alternating}.

\section{Additional Material - Noisy Setting}
In this section we present additional material associated to the noisy setting within the main body of the manuscript. Section \ref{sec:AdditionalPlotsTVDenoising} presents details for experiments on simulated data. Section \ref{sec:AVIRIS:Extra} details for the experiments on real data.

\subsection{Details for Total Variation Basis Pursuit Denoising - Simulated Data}
\label{sec:AdditionalPlotsTVDenoising}

We now provide some details related to the \textbf{Simulated Data} experiments in Section \ref{sec:TVBasisPursuitDenoisingProblem} the manuscript. Group Lasso used best regularisation from between $[10^{-6},10^{-2}]$. Dirty model regularisation followed \citep{jalali2010dirty} with (in their notation) $5 \times 5$ ($\log$ -scale) grid search for $\lambda_g$ and $\lambda_b$ with $\lambda_g/\lambda_b \in [10^{-3},10]$, $\lambda_b = c \sqrt{7/200}$ and $c \in [10^{-2},10]$. Dirty model was fit using MALSAR \citep{zhou2011malsar}. The group Lasso variants used normalised matrices $A_{v}/\sqrt{N_v}$ and responses $y_{v}/\sqrt{N_{v}}$.
Total Variation Basis Pursuit Denoising parameter was $\eta = \sqrt{200 \times n} 0.1$. Each point and error bars from $5$ replications.

\subsection{Data Preparation and Experiment Parameters for AVIRIS Application}
\label{sec:AVIRIS:Extra}

In this section we present details associated to the application of Total Variation Basis Pursuit Denoising TVBPD (\ref{equ:DecentralisedDenoising}) to the AVIRIS Cuprite dataset. We begin with Figure \ref{fig:AVIRIS:MarkedArea}, which presents the sector of the AVIRIS Cuprite dataset used, as well as the 80 x 80 pixel subset portion sub-sampled for our experiment. We note each pixel in the dataset is associated to 224 spectral bands between 400 and 2500 nm and, in short, the objective is to decompose the spectrum of each pixel into a sparse linear combination known mineral spectra. The specific bandwidth presented in Figure \ref{fig:AVIRIS:MarkedArea} demonstrate that this area maybe a region of interest. Following \citep{iordache2011sparse,iordache2012total}, we construct a spectral library $ A_{\text{Lib}}$ by randomly sampling 240 mineral from the USGS library splib07 
\footnote{\url{https://crustal.usgs.gov/speclab/QueryAll07a.php}}. After cleaning the AVIRIS dataset and the library we are left with $N_v = 184$ spectral bands for each pixel $v \in V$, and thus, $A_v= A_{\text{Lib}} \in \mathbb{R}^{184 \times 240}$ and $y_v \in \mathbb{R}^{184}$. We now go on to describe more detail the experimental steps.
\begin{figure}[!h]
\includegraphics[width = 0.5\textwidth]{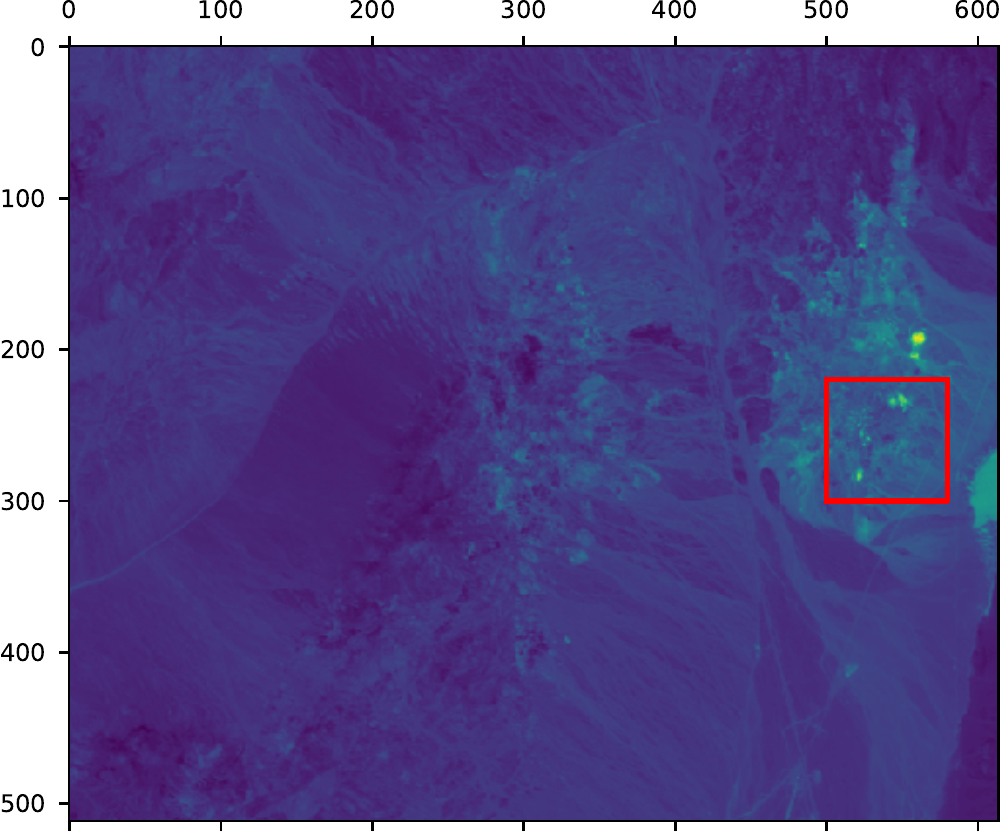}
\includegraphics[width = 0.42\textwidth]{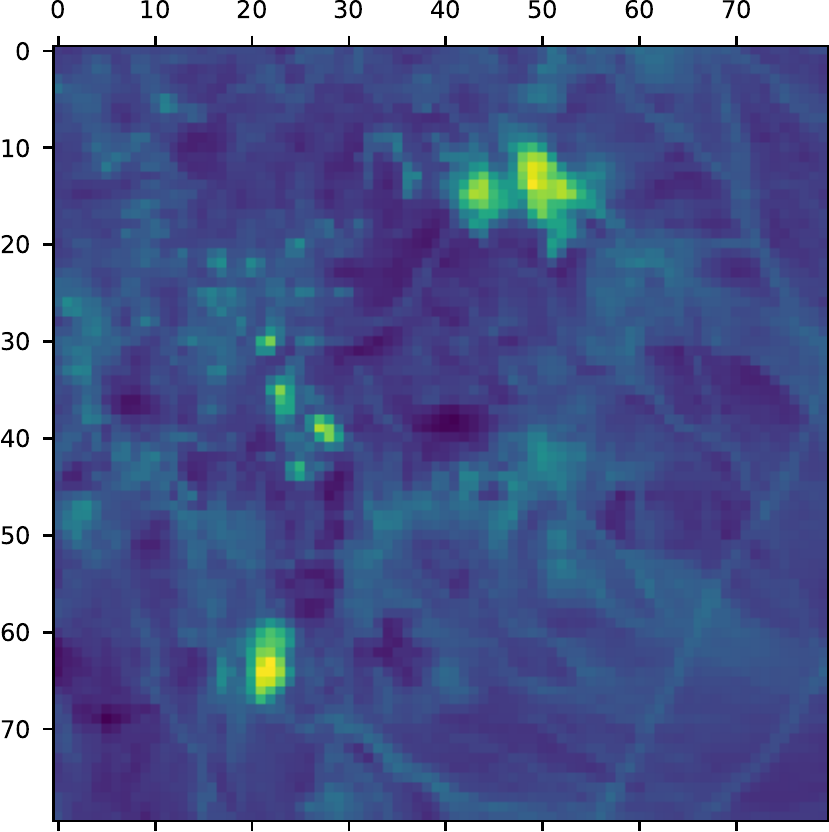}
\caption{\textit{Left}: Sector \texttt{f970619t01p02\_r04\_sc03.a.rfl} 
of AVIRIS data set at bandwidth of 557.07 nm. Red square indicates $80\times 80$ portion of the sector used as the data set. \textit{Right}: Red squared section zoomed in. }
\label{fig:AVIRIS:MarkedArea}
\end{figure}

\paragraph{Cleaning AVIRIS Cuprite Dataset} We followed \citep{iordache2012total} and removed the spectral bands 1-2, 105-115, 150-170 and 223-224, which are due to water absorption and low signal to noise. This would leave us with 188 spectral bands, although additional bands were removed due to large values within the USGS Library, see next paragraph. 
\paragraph{Sub-sampling USGS Library} We took a random sample of 240 minerals from splib07 library, that are specifically calibrated to the AVIRIS 1997 data set i.e.\ have been resampled at the appropriate bandwidths.  A number of the spectrum for the minerals were corrupted or had large reflectance values for particular wavelengths e.g. greater than $10^{34}$. We therefore restricted ourselves to minerals that had less than 10 corrupted wavelengths.  After sub-sampling, any wavelengths with a corrupted value (if it contained a value greater than $10$) were removed. This left us with 184 spectral bands. 
\paragraph{Algorithm Parameters} To apply Basis Pursuit Denoising independently to each pixel, we used the SPGL1 python package, which can be found at \url{https://pypi.org/project/spgl1/}. To solve the Total Variation Basis Pursuit Denoising problem \eqref{equ:DecentralisedDenoising}, we used the Alternating Direction Methods of Multiplers (ADMM) algorithm for $\ell_1$-problems in \citep{yang2011alternating}, specifically the inexact method (2.16). We applied this algorithm to the normalised data i.e.\ dividing by the matrix and response vector by the square root of the total number of samples (4 pixels $\times$ 184 spectral bands). We ran the algorithm for $500$ iterations with parameters (in the notation of \citep{yang2011alternating}) $\tau = 0.1$, $\beta = 2$, $\gamma = 0.1$ and $\delta=0.001$. We note that directly applying the SPGL1  python package to the Total Variation Basis Pursuit Denoising problem \eqref{equ:DecentralisedDenoising}, resulted in instabilities when choosing $\eta < 0.2$. We chose $\eta = 0.001$ for both independent Basis Pursuit Denoising case and the Total Variation Basis Pursuit Denoising \eqref{equ:DecentralisedDenoising}, following the regularisation choice in \citep{iordache2012total}. Meanwhile, the group Lasso was fit using scikit-learn with regularisation $0.001$, and the SUNnSAL algorithm \citep{bioucas2010alternating} with regularisation $0.001$ was applied using the python implementation which can be found at \url{https://github.com/Laadr/SUNSAL}. We note when using SUNnSAL it is common to perform a computationally expensive pre-processing step involving a non-convex objective, see \citep{iordache2011sparse,iordache2012total}. This was not performed in this case, as all of the other methods did not pre-process the data. 

\begin{figure}[!h]
\centering
\includegraphics[width = 0.24\textwidth]{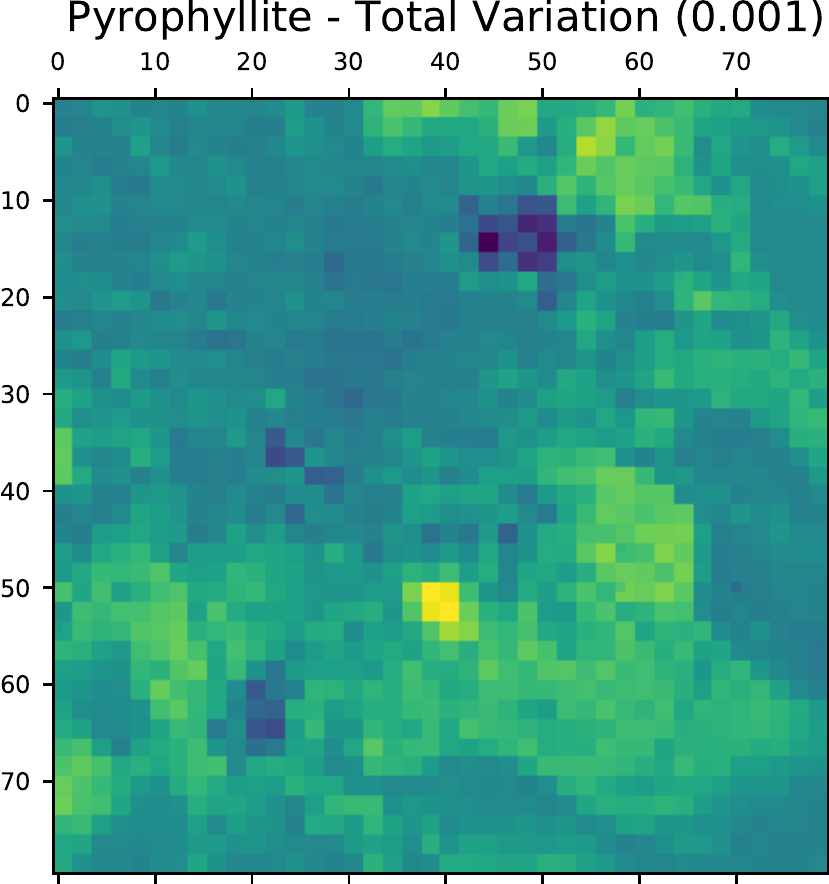}
\includegraphics[width = 0.24\textwidth]{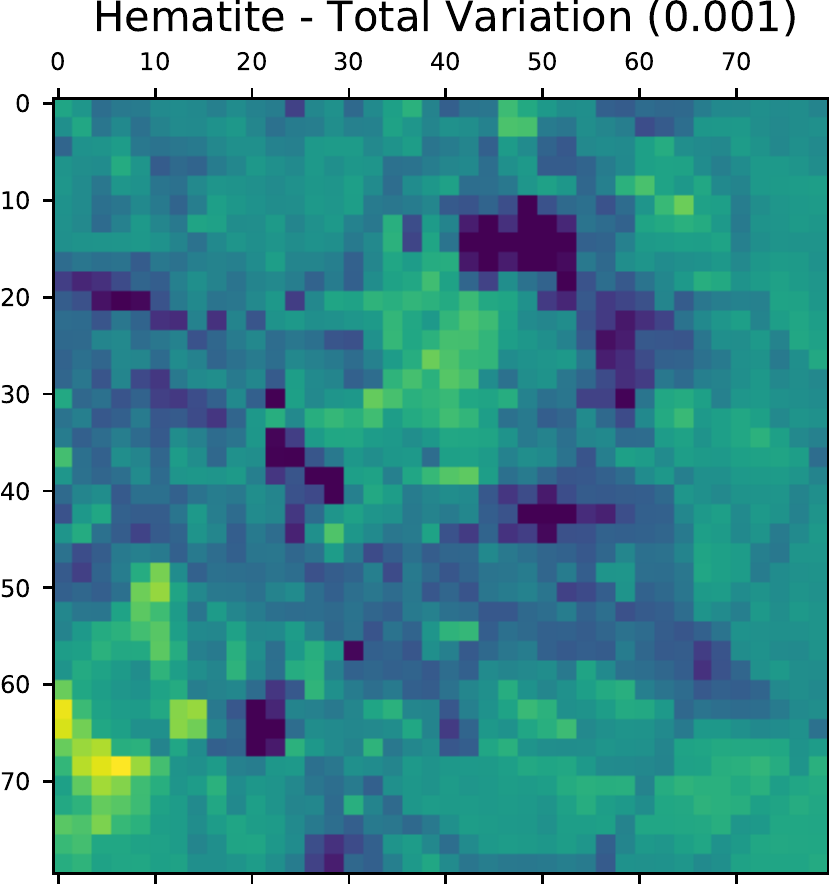}
\includegraphics[width = 0.24\textwidth]{Plots/Andradite_TV.pdf}
\includegraphics[width = 0.24\textwidth]{Plots/Polyhalite_TV.pdf}
\\
\includegraphics[width = 0.24\textwidth]{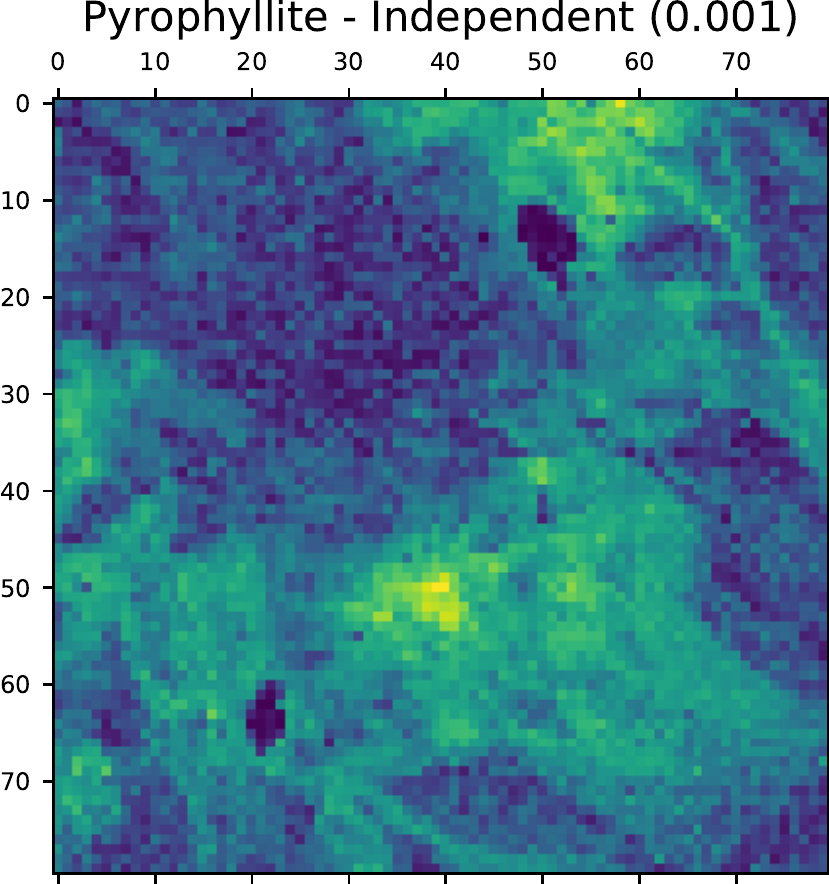}
\includegraphics[width = 0.24\textwidth]{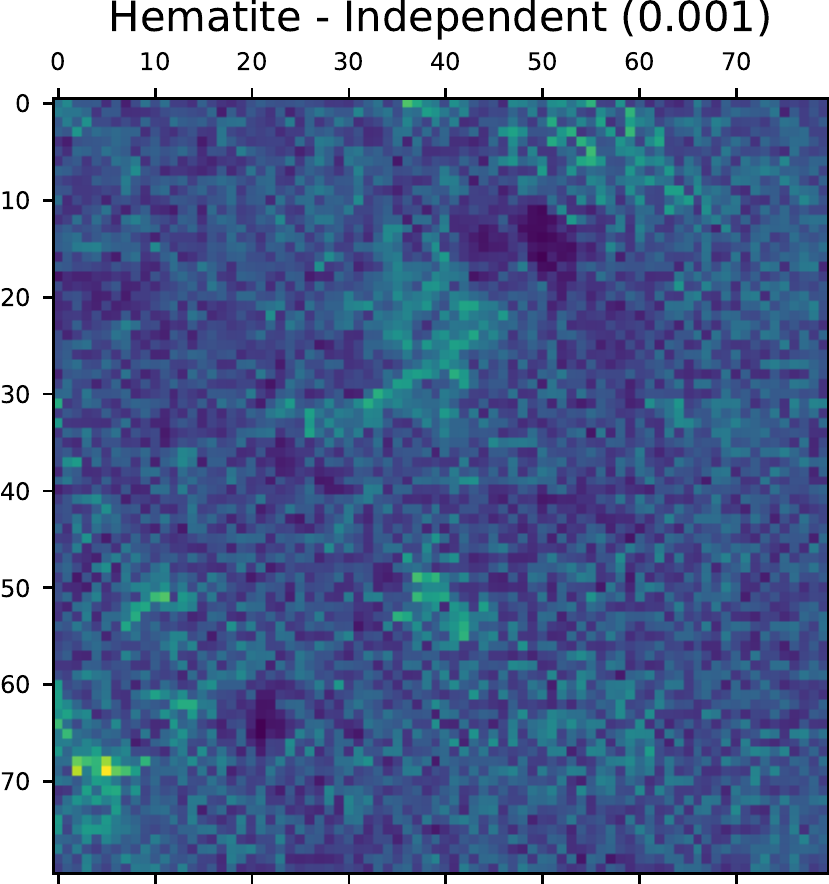}
\includegraphics[width = 0.24\textwidth]{Plots/Andradite_Indep_Noise1.pdf}
\includegraphics[width = 0.24\textwidth]{Plots/Polyhalite_Indep_Noise1.pdf}
\\
\includegraphics[width = 0.24\textwidth]{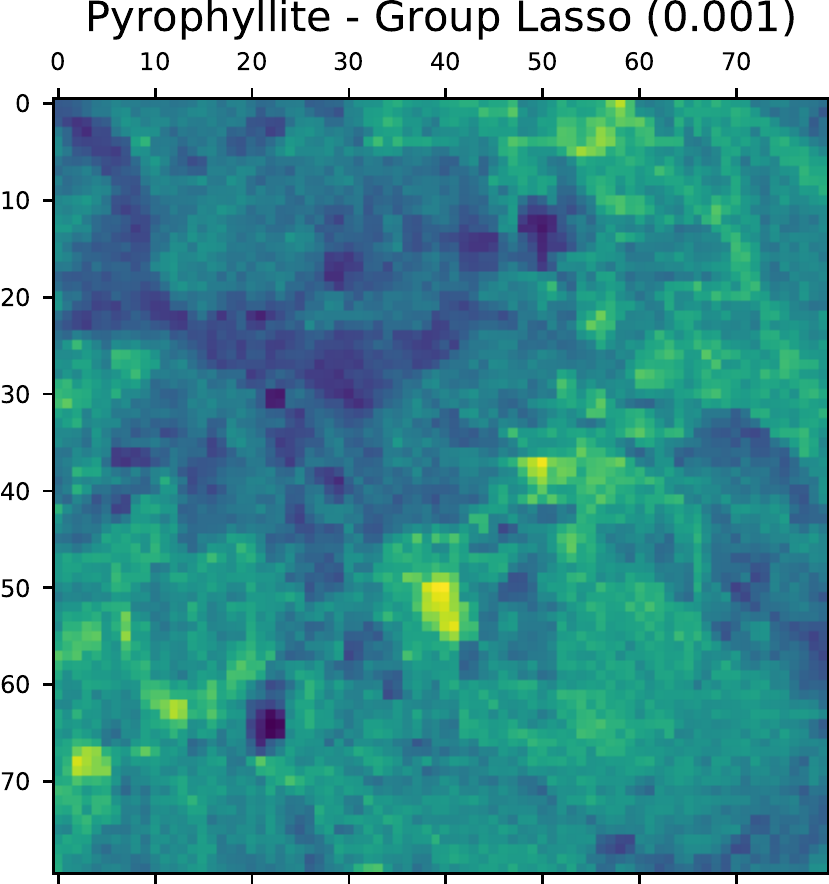}
\includegraphics[width = 0.24\textwidth]{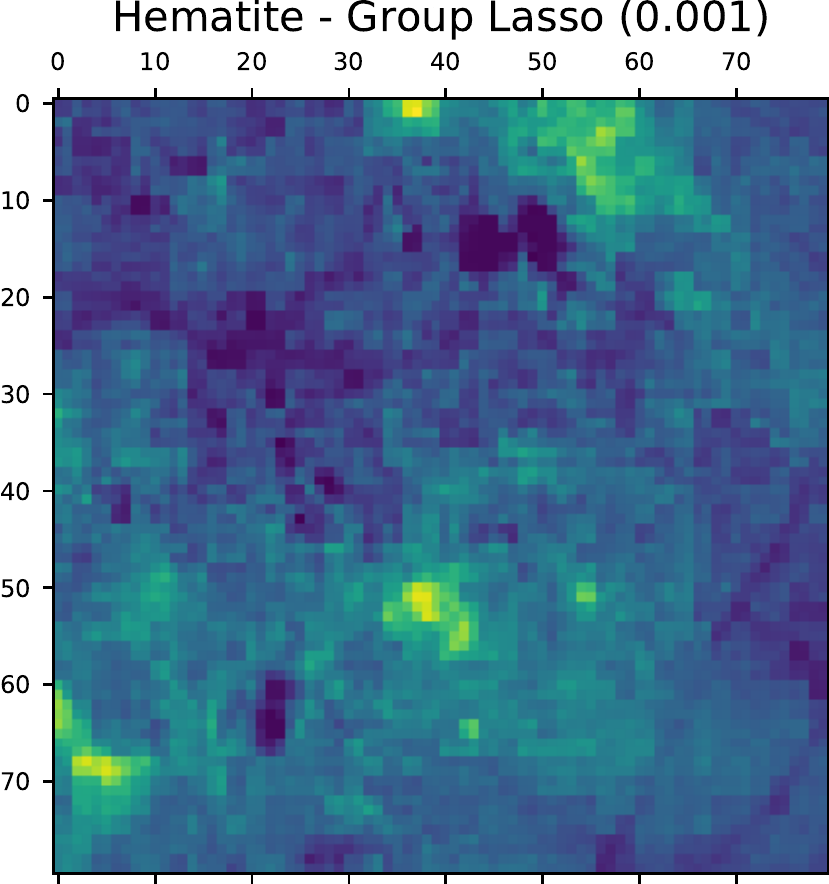}
\includegraphics[width = 0.24\textwidth]{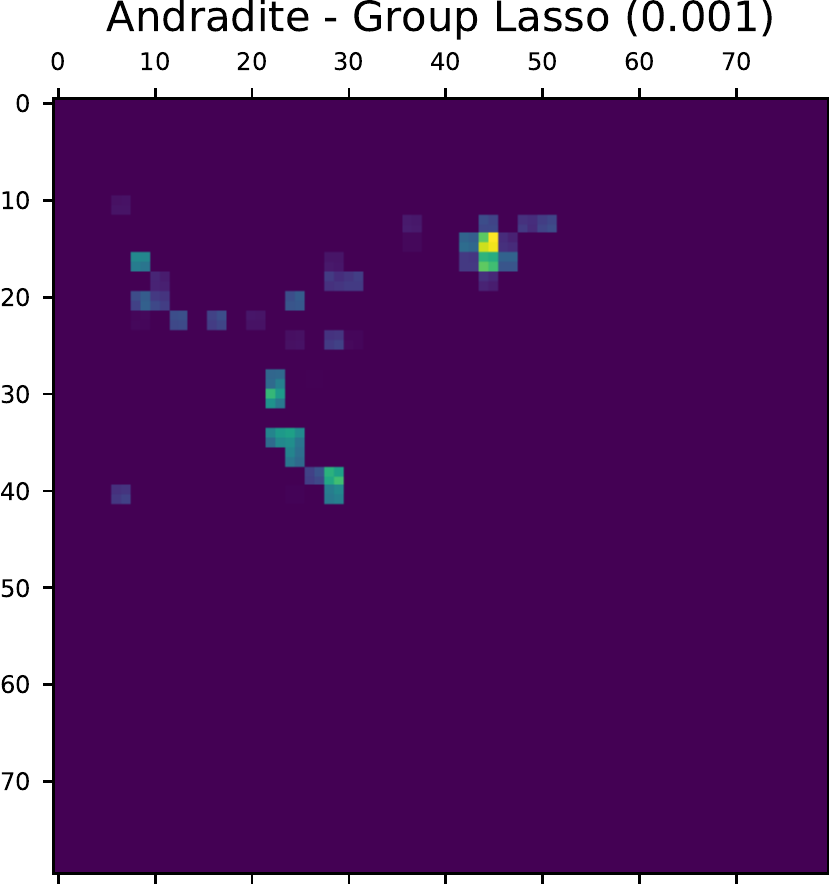}
\includegraphics[width = 0.24\textwidth]{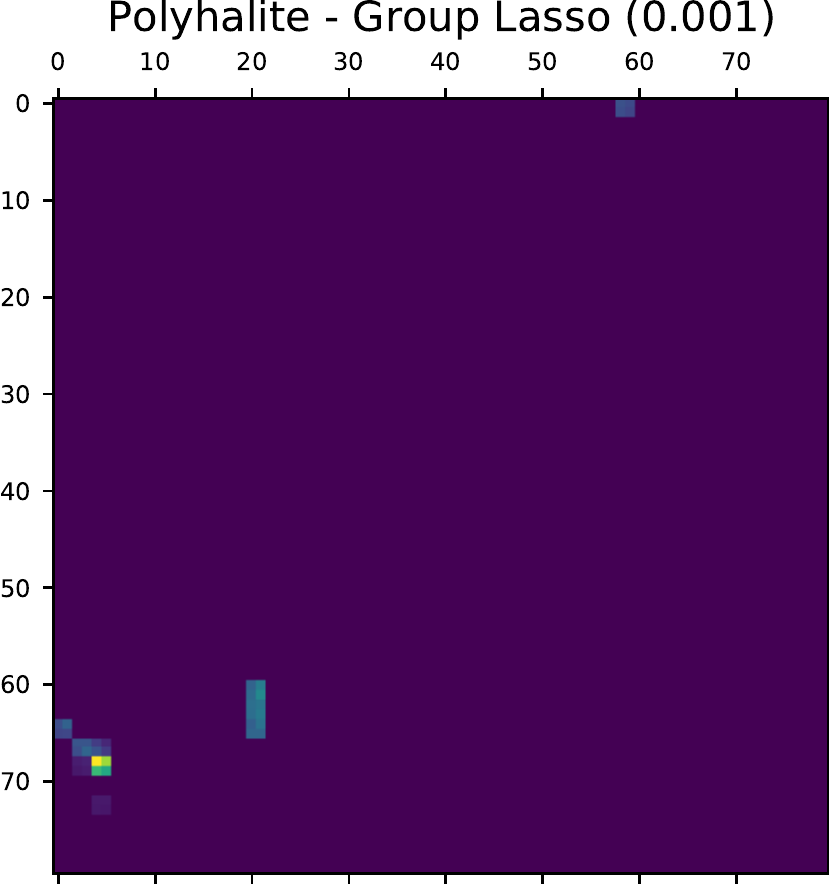}
\\
\includegraphics[width = 0.24\textwidth]{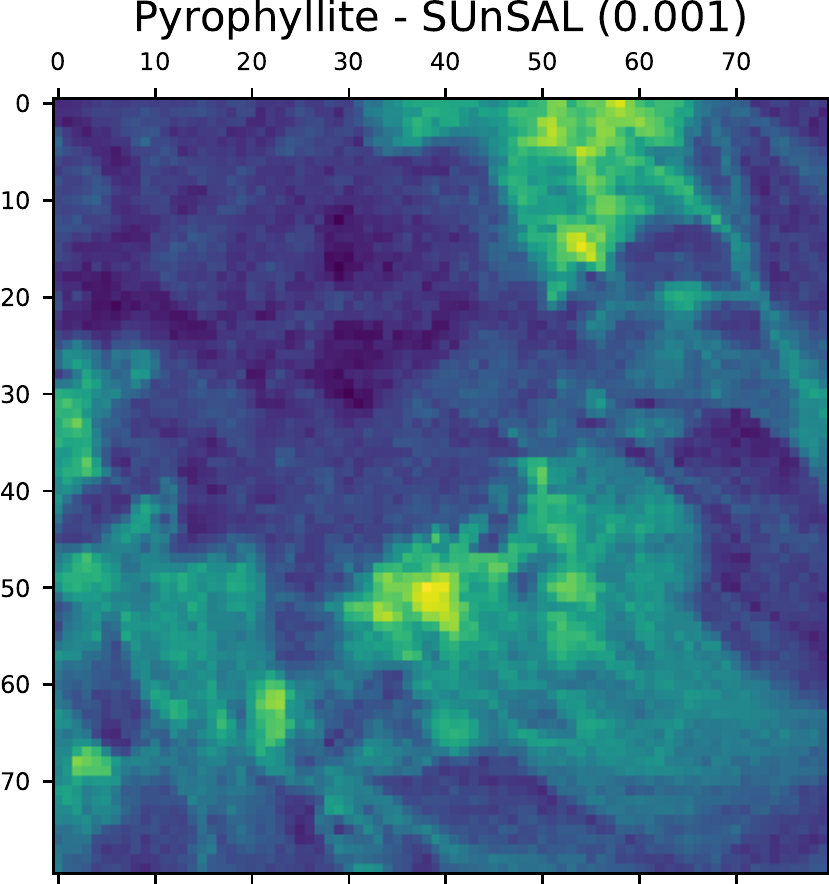}
\includegraphics[width = 0.24\textwidth]{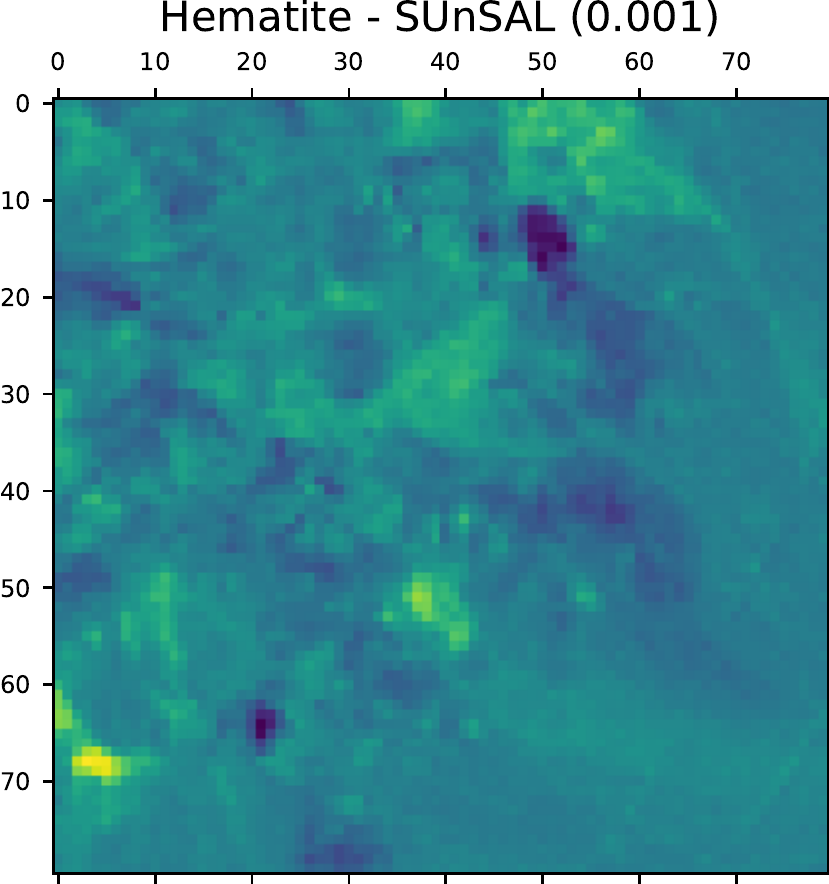}
\includegraphics[width = 0.24\textwidth]{Plots/Andradite_SUNSAL.pdf}
\includegraphics[width = 0.24\textwidth]{Plots/Polyhalite_SUNSAL.pdf}
\\
\caption{Coefficients associated to the mineral Pyrophyllite (\textit{Left}), Hematite (\textit{Left-Middle}), Andradite (\textit{right-middle}), and Polyhalite (\textit{Right}). Methods considered are: \textit{Top}: Total Variation Basis Pursuit Denoising applied to 2x2 pixels simultaneously with $\eta  = 0.001$; \textit{Middle-Top}: Basis Pursuit Denoising applied independently to each pixel with $\eta = 0.001$. \textit{Middle-Bottom}: group Lasso (jointly penalised all coefficients) applied to 2x2 pixels simultaneously with regularisation $0.001$.  \textit{Bottom}: SUNSAL with regularisation of $0.001$. Yellow pixels indicate higher values. }
\label{fig:AVIRIS:1}
\end{figure}

\section{Distributed ADMM Updates for Total Variation Basis Pursuit}
\label{ADMM:Updates}
In this section we more precisely describe the Distributed ADMM algorithm for fitting the Total Variation Basis Pursuit problem \eqref{equ:DecentralisedPursit:Reformulated}. We recall the consensus optimisation formulation of the Total Variation Basis Pursuit problem is as follows
\begin{align*}
& \min_{x_v, v \in V} \|x_1\|_1  + \sum_{e \in V} \|\Delta_e\|_1 \text{ subject to } \\
 & A_v x_v = Y_v \text{ for all } v \in V \\
& x_v - x_w = \Delta_e \text{ for all } e = \{v,w\} \in E.
\end{align*}
where  we consider the Augmented Lagrangian from dualizing the consensus constraint
\begin{align*}
&\mathcal{L}_{\rho}(\{x_v\}_{v \in V}, \{\Delta_e\}_{e \in E}, \{\gamma_{e}\}_{e \in E} ) 
 = 
\|x_1\|_1 \\
&  +  
\sum_{e=\{v,w\} \in E} \!\! \|\Delta_e\|_1 + 
\frac{\rho}{2} \|x_v-x_w - \Delta_e\|_2 + \langle \gamma_e,x_v-x_w - \Delta_e\rangle.
\end{align*}
Now the ADMM algorithm initialized at $\big(\{x_v^{1}\}_{v \in V}, \{\Delta^{1}_e\}_{e \in E}, \{\gamma_{e}^{1}\}_{e \in E}\big)$ then proceeds to update the iterates for $t \geq 1$ as 
\begin{align}
\label{equ:ADMMUpdates}
x^{t+1}_{v} & = \argmin_{x^{t}_v} \mathcal{L}_{\rho}(\{x^{t}_v\}_{v \in V}, \{\Delta^{t}_e\}_{e \in E}, \{\gamma^{t}_{e}\}_{e \in E} ) 
\text{ subject to } A_v x_v = Y_v  \text{ for } v \in V \\
\Delta^{t+1}_e & = \argmin_{\Delta^{t}_v} \mathcal{L}_{\rho}(\{x^{t+1}_v\}_{v \in V}, \{\Delta^{t}_e\}_{e \in E}, \{\gamma^{t}_{e}\}_{e \in E} ) 
\quad\quad\quad\quad\quad\quad\quad\quad \text{ for } e \in E \nonumber \\
\gamma^{t+1}_e & = \gamma_e^{t} + \rho \big( x_v - x_w - \Delta_e \big) 
\quad\quad\quad\quad\quad\quad\quad\quad\quad\quad\quad\quad\quad\quad\quad\quad\quad\,\,\,\,
 \text{ for } e \in E \nonumber 
\end{align}
We now set to show how each of the above updates can be implemented in a manner that respects the network topology due to the Augmented Lagrangian $\mathcal{L}_{\rho}$ decoupling across the network. These will be precisely described within the following sections. For clarity each update will be given its own subsection and the super script notation i.e.\ $x^{t}_{v}$ will be suppressed. 

\subsection{Updating $\{x_v\}$}
The updates for $\{x_v\}_{v \in V}$ take two different forms depending on whether $v$ is associated to the root node i.e.\ $v=1$ or otherwise. We begin with the case of a root note. 
\subsubsection{Root Node $x_1$}
The update for $x_1$ in the ADMM algorithm  \eqref{equ:ADMMUpdates} requires solving
\begin{align*}
	& \min_{x_1} \|x_1\|_1 + \sum_{e = (i,j) \in E : i = 1} \frac{\rho}{2} \|x_1 - x_j - \Delta_e \|_2^2 + \langle \gamma_e, x_1 - x_j - \Delta_e \rangle\\
	& \quad\quad\quad\quad\quad + 
	\sum_{e = (i,j) \in E : j = 1} \frac{\rho}{2} \|x_j - x_1 - \Delta_e \|_2^2 + \langle \gamma_e, x_j - x_1 - \Delta_e \rangle
	\\
	& \text{subject to }
	A_1 x_1 = y_1
\end{align*}
where we note the two summations in the objective arise from the orientation of the edges within the network.  This is then equivalent to considering solve a problem of the form
\begin{align}
\label{equ:LinPursuitWL2}
\min_{x} \|x\|_1 + \nu^{\top} x + c \|x\|_2^2 \text{ subject to }
Ax = b
\end{align}
with parameters $A = A_1$, $b=y_1$, $c = \text{Deg}(1) \frac{\rho}{2}$ where $\text{Deg}(1)$ is the degree of the root node $1$ and \\
$\nu = \sum_{e = (i,j) \in E : i = 1} \gamma_e - \rho(x_i + \Delta_e) + \sum_{e = (i,j) \in E : j = 1} - \gamma_e - \rho(x_j + \Delta_e) $.

To solve the problem \eqref{equ:LinPursuitWL2} we adopt the approach used in \citep[Appendix B]{mota2011distributed} to an optimisation problem of the same form. That is, we consider the dual problem
\begin{align*}
\max_{\lambda} \lambda^{\top}b + \sum_{i=1}^{p} \inf_{x_{i}} \big( |x_i| + u_i(\lambda) x_{i} + c x_i^2\big) 
\end{align*}
where the dual variable $\lambda \in \mathbb{R}^{n}$ and $u(\lambda) = \nu - A^{\top} \lambda$. 
The gradient of the above problem is then $b - A x(\lambda)$ where $x(\lambda) = (x(\lambda)_{1},\dots, x(\lambda)_{p})$ is constructed from the unique minimiser of $|x_i| + u_i(\lambda) x_{i} + c x_i^2$ for $i=1,\dots,p$ which is $x(\lambda)_i$. This can then be written in closed form as 
\begin{align*}
	x_i(\lambda) = 
	\begin{cases}
	0 & \text{ if } -1 \leq u_i(\lambda) \leq 1 \\
	-(u_i(\lambda) + 1)/2c & \text{ if } u_i(\lambda) < -1 \\
	-(u_i(\lambda) - 1)/2c & \text{ if } u_i(\lambda) > 1
	\end{cases}
\end{align*}
Given a solution $\lambda^{\star}$ the solution to the original problem is then $x(\lambda^{\star})$. To solve the Dual problem we use the Barzilai - Borwein algorithm \citep{raydan1997barzilai} with warm restarts using the dual variable from the previous iteration.

\subsubsection{Non-Root Node} 
In the case of $x_v$ which is not the root node i.e.\ $v \not=1$, we require solving the optimisation problem
\begin{align*}
	& \min_{x_v} \sum_{e = (i,j) \in E : i = v} \frac{\rho}{2} \|x_v - x_j - \Delta_e\|_2^2 + \langle \gamma_e, x_v - x_j - \Delta_e \rangle \\
	& \quad\quad\quad\quad + 
	\sum_{e = (i,j) \in E: j = v } \frac{\rho}{2} \|x_i - x_v - \Delta_e\|_2^2 + \langle \gamma_e, x_i - x_v - \Delta_e \rangle  \\
	& \text{subject to}
	\quad 
	A_v x_v = y_v
\end{align*}
This minimisation can be written in the form 
\begin{align}
\label{equ:NonPeanalisedzUpdate}
	&\min_{x} \|x\|_2^2 + \langle a,x\rangle \text{ subject to } \\
	& Ax = b \nonumber
\end{align}
with parameters $A = A_v$, $b= y_v$ and \\
$a = \frac{2}{\text{Deg}(v)} \Big( \big( \sum_{e \in \{i,j\}: i= v } - \Delta_e - x_j + \frac{\gamma_e}{\rho}\big) + \big( \sum_{e \in \{i,j\}: j = v} \Delta_e  - x_i  - \frac{\gamma_e}{\rho} \big) \Big)$. 
Since $\|x\|_2^2 + \langle a,x\rangle = \|x + \frac{a}{2} \|_2^2 - \frac{1}{2} \|a\|_2^2$,
This leads to the equivalent optimisation problem
\begin{align*}
	& \min_{u} \|u\|_2^2 \text{ subject to } \\
	& A u = b + A \frac{a}{2}.
\end{align*} 
This is exactly the least norm solution to a linear system, and is solved by $u = A^{\dagger}(b + A \frac{a}{2})$ where $A^{\dagger}$ is the Moore-Penrose pseudo-inverse. We then recover the solution to \eqref{equ:NonPeanalisedzUpdate} by setting  $x = A^{\dagger}(b + A \frac{a}{2}) - \frac{a}{2}$.

\subsection{Updating $\{\Delta_e\}_{e \in V}$}
For each edge $e = (i,j) \in E$ the updates require solving 
\begin{align*}
	\min_{\Delta_e} \|\Delta_e\|_1 + \frac{\rho}{2} \|x_i - x_j - \Delta_e\|_2^2 - \langle \gamma_e ,\Delta_e\rangle
\end{align*}
which is a equivalent to 
\begin{align*}
\min_{\Delta_e} \|\Delta_e\|_1 + \frac{\rho}{2} \|\Delta_e\|_2^2  - \langle \Delta_e, \gamma_e + z_i - z_j \rangle.
\end{align*}
This is a shrinkage step and thus the minimiser can be written as 
\begin{align*}
\Delta_e = \begin{cases}
0 & \text{ if } |\gamma_e + \rho(z_{i} - z_{j}) | < 1 \\
\frac{1}{\rho} \big( \gamma_e + \rho(z_i - z_{j}) - 1\big) & \text{ if }  \gamma_e + \rho(z_{i} - z_{j}) > 1 \\
\frac{1}{\rho} \big( \gamma_e + \rho(z_i - z_{j}) + 1\big) & \text{ if } \gamma_e + \rho(z_i - z_{j}) < -1
 \end{cases}
\end{align*}

\section{Proofs For Noiseless Case}
\label{sec:Proof:Noiseless}
In this section we present proofs for the results associated to Total Variation Basis Pursuit (TVBP) \eqref{equ:TVBP}.
This section is then structured as follows. Section \ref{sec:TechLemmas} presents technical lemmas associated to the Restricted Isometry Property of matrices. Section  \ref{sec:Proof:BP} introduces the Basis Pursuit problem. 
Section \ref{sec:Proof:reformulated} demonstrates how the Total Variation Basis Pursuit (TVBP) problem can be reformulated into a Basis Pursuit problem. Section \ref{sec:EqualMatrices} presents the proof of Theorem \ref{thm:IdenticalMatrices}. Section \ref{sec:proof:starlike} presents the proof of Theorem \ref{thm:Starlike}.

\subsection{Technical Lemmas for the Restricted Isometry Property}
\label{sec:TechLemmas}
Recall that a matrix $A \in \mathbb{R}^{N \times d}$ satisfies Restricted Isometry Property at level $k$ if there exists a constant $\delta_{k} \in [0,1)$ such that for any $k$-sparse vector $x \in \mathbb{R}^{p}, \|x\|_0 \leq k$ we have
\begin{align*}
    (1-\delta_k) \|x\|_2^2 \leq \|Ax\|_2^2 \leq (1+\delta_k)\|x\|_2^2 
\end{align*}
Now, Theorem 9.2 from \citep{foucart2013invitation}  demonstrates that a sub-Gaussian matrix can satisfy the Restricted Isometry Property in high probability provided the sample size is sufficiently large. This is presented within the following theorem. 
\begin{theorem}
\label{thm:RIPConc}
Let $A \in \mathbb{R}^{N \times p}$ be sub-Gaussian matrix with independent and identically distributed entries. Then there exists a constant $C > 0$ (depending on sub-Gaussian parameters $\beta$ and $\kappa$) such that the Restricted Isometry Constant of $A/\sqrt{N}$ satisfies $\delta_{k} \leq \delta$ with probability atleast $1-\epsilon$ provided 
\begin{align*}
    N \geq C \delta^{-2} \big( k \log(ep/k) + \log(\epsilon/2) \big).
\end{align*}
\end{theorem}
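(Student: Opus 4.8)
The plan is to reduce the two-sided inequality defining the restricted isometry constant to a single uniform operator-norm bound over all $k$-element supports, and then establish that bound by a concentration-plus-covering argument. Write $\widetilde{A} = \sqrt{N}\,A$, so that $\widetilde{A}$ has i.i.d.\ sub-Gaussian entries; as is implicit in the target being $I_k$, I normalise so that these entries are mean-zero and unit-variance (otherwise $\frac{1}{N}\widetilde{A}^\top\widetilde{A}$ would concentrate around $\sigma^2 I_k$, and the dependence of $C$ on $\beta,\kappa$ absorbs this scaling). Fix a support set $S \subseteq \{1,\dots,p\}$ with $|S| = k$ and let $\widetilde{A}_S$ denote the $N \times k$ column submatrix. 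For any $x$ supported on $S$ one has $\|Ax\|_2^2 = \frac{1}{N}\|\widetilde{A}_S x_S\|_2^2$, so the RIP condition $(1-\delta)\|x\|_2^2 \le \|Ax\|_2^2 \le (1+\delta)\|x\|_2^2$ for all such $x$ is exactly the statement $\big\|\frac{1}{N}\widetilde{A}_S^\top\widetilde{A}_S - I_k\big\|_{2\to 2} \le \delta$. The goal is therefore to control this spectral deviation, uniformly over all $\binom{p}{k}$ choices of $S$.

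First I would prove a pointwise concentration bound. For a fixed unit vector $u \in \R^k$, the variable $\frac{1}{N}\|\widetilde{A}_S u\|_2^2 = \frac{1}{N}\sum_{i=1}^N \langle \widetilde{a}_i, u\rangle^2$ is an average of i.i.d.\ mean-one sub-exponential random variables, since each $\langle \widetilde{a}_i, u\rangle$ is sub-Gaussian and its square is sub-exponential. A Bernstein inequality for sums of sub-exponential variables gives $\mathbb{P}\big(\big|\frac{1}{N}\|\widetilde{A}_S u\|_2^2 - 1\big| \ge t\big) \le 2\exp\!\big(-cN\min(t^2, t)\big)$, with $c$ depending only on $\beta,\kappa$. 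For the relevant regime $t = \delta \le 1$ the exponent is $cN\delta^2$, which is precisely the source of the $\delta^{-2}$ scaling in the final bound.

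Next I would upgrade this to a uniform statement over the sphere $S^{k-1}$ by covering. Choosing a $\rho$-net $\mathcal{N}$ with $|\mathcal{N}| \le (1 + 2/\rho)^k$ (say $\rho = 1/4$), a union bound over $\mathcal{N}$ controls the quadratic form on the net, and the standard lemma bounding the operator norm of a symmetric matrix by its values on a net converts this into $\big\|\frac{1}{N}\widetilde{A}_S^\top\widetilde{A}_S - I_k\big\|_{2\to 2} \le \delta$ on the whole sphere (after rescaling $\delta$ by a net-dependent constant). This costs a factor $(1+2/\rho)^k$ per support $S$; a further union bound over the $\binom{p}{k} \le (ep/k)^k$ supports yields total failure probability at most $2\,(ep/k)^k (1+2/\rho)^k \exp(-c'N\delta^2)$.

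Finally I would solve for $N$: taking logarithms, this is below $\epsilon$ as soon as $c'N\delta^2 \gtrsim k\log(ep/k) + k\log(1+2/\rho) + \log(2/\epsilon)$, and since $\log(1+2/\rho)$ is an absolute constant the middle term is absorbed into $k\log(ep/k)$, giving the stated $N \ge C\delta^{-2}\big(k\log(ep/k) + \log(2/\epsilon)\big)$. The main obstacle is the concentration step: establishing the Bernstein bound with the correct $\min(t^2,t)$ behaviour and tracking how $\beta,\kappa$ enter the constant $c$, together with the bookkeeping in the net-to-sphere passage that fixes the final $C$. Since the statement is exactly Theorem 9.2 of \citep{foucart2013invitation}, in practice I would simply invoke that result; the sketch above is the argument underlying it.
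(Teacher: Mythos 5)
The paper does not prove this statement itself: it is quoted verbatim as Theorem 9.2 of \citep{foucart2013invitation}, and your closing remark that you would simply invoke that result matches the paper's treatment exactly. Your sketch of the underlying argument (reduction to a spectral-norm bound on $\tfrac{1}{N}\widetilde{A}_S^\top\widetilde{A}_S - I_k$, Bernstein concentration for sub-exponential squares, an $\eps$-net over each $k$-dimensional sphere, and a union bound over the $\binom{p}{k}\le (ep/k)^k$ supports) is the standard and correct proof of that theorem, and your observation that the entries must implicitly be centred and normalised to unit variance --- something the paper's tail-bound-only definition of sub-Gaussianity does not guarantee --- is a legitimate caveat; note also that the paper's $\log(\epsilon/2)$ should read $\log(2/\epsilon)$, as your derivation correctly produces.
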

We will also make use of the following assumption. 
\begin{proposition}
\label{prop:DisjointRIP}
Let $\mathbf{u},\mathbf{v} \in \mathbb{R}^{p}$ be vectors such that $\|\mathbf{u}\|_0 \leq s$ and $\|\mathbf{v}\|_0 \leq k$, and matrix $A \in \mathbb{R}^{N \times p}$ satisfy Restricted Isometry Property up to $s+k$ with constant $\delta_{s+k}$. If the support of the vectors is disjoint $\text{Supp}(v) \cap \text{Supp}(u) = \emptyset$ then 
\begin{align*}
    |\langle A\mathbf{u},A \mathbf{v}\rangle| \leq 
    \delta_{s+k} \|\mathbf{u}\| \|\mathbf{v}\|_2
\end{align*}
\end{proposition}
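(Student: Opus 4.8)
The plan is to reduce the claim to the classical ``restricted orthogonality'' consequence of the Restricted Isometry Property, exploiting the polarization identity together with the disjointness of the supports. By homogeneity it suffices to treat unit vectors: after discarding the trivial cases $\mathbf{u}=0$ or $\mathbf{v}=0$, I would replace $\mathbf{u}$ by $\mathbf{u}/\|\mathbf{u}\|_2$ and $\mathbf{v}$ by $\mathbf{v}/\|\mathbf{v}\|_2$ and aim to prove $|\langle A\mathbf{u}, A\mathbf{v}\rangle| \leq \delta_{s+k}$; undoing this normalization at the end multiplies the right-hand side by $\|\mathbf{u}\|_2\|\mathbf{v}\|_2$, which is exactly the stated bound.

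First I would record the two ways in which the hypothesis $\text{Supp}(\mathbf{u}) \cap \text{Supp}(\mathbf{v}) = \emptyset$ enters. Since the supports do not overlap, $\mathbf{u}\pm\mathbf{v}$ has at most $s+k$ nonzero entries and is therefore $(s+k)$-sparse, so the Restricted Isometry Property at level $s+k$ applies to it. Disjointness also forces $\langle \mathbf{u},\mathbf{v}\rangle = 0$, so under the normalization above $\|\mathbf{u}\pm\mathbf{v}\|_2^2 = \|\mathbf{u}\|_2^2 + \|\mathbf{v}\|_2^2 = 2$. Next I would invoke the polarization identity
\begin{align*}
\langle A\mathbf{u}, A\mathbf{v}\rangle = \frac{1}{4}\Big( \|A(\mathbf{u}+\mathbf{v})\|_2^2 - \|A(\mathbf{u}-\mathbf{v})\|_2^2 \Big),
\end{align*}
and bound each term by the appropriate side of the RIP inequality: the upper bound $\|A(\mathbf{u}+\mathbf{v})\|_2^2 \leq (1+\delta_{s+k})\|\mathbf{u}+\mathbf{v}\|_2^2$ on the positive term and the lower bound $\|A(\mathbf{u}-\mathbf{v})\|_2^2 \geq (1-\delta_{s+k})\|\mathbf{u}-\mathbf{v}\|_2^2$ on the subtracted term. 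Substituting $\|\mathbf{u}\pm\mathbf{v}\|_2^2 = 2$ collapses the estimate to $\langle A\mathbf{u}, A\mathbf{v}\rangle \leq \delta_{s+k}$, and running the same argument with $\mathbf{v}$ replaced by $-\mathbf{v}$ (equivalently, swapping which squared norm is bounded above) yields the matching lower bound $\langle A\mathbf{u}, A\mathbf{v}\rangle \geq -\delta_{s+k}$.

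There is essentially no hard step here, since this is the standard restricted-orthogonality lemma underlying RIP-based compressed sensing. The only place requiring care is ensuring the disjoint-support hypothesis is used in \emph{both} of its roles, namely to certify the $(s+k)$-sparsity of $\mathbf{u}\pm\mathbf{v}$ so that RIP at level $s+k$ is legitimately applicable, and to make $\|\mathbf{u}\pm\mathbf{v}\|_2^2$ equal to $\|\mathbf{u}\|_2^2+\|\mathbf{v}\|_2^2$ exactly. Were the supports allowed to overlap, the cross term in $\|\mathbf{u}\pm\mathbf{v}\|_2^2$ would not cancel and the clean constant $\delta_{s+k}$ would degrade, so this is the structural point worth flagging rather than any genuine analytic difficulty.
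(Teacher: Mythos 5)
Your proof is correct. The paper states Proposition \ref{prop:DisjointRIP} without proof, treating it as the standard restricted orthogonality lemma from the compressed sensing literature (it is essentially Proposition 6.3 of the cited Foucart--Rauhut monograph), and your polarization argument --- normalize to unit vectors, use disjointness to get $(s+k)$-sparsity of $\mathbf{u}\pm\mathbf{v}$ and $\|\mathbf{u}\pm\mathbf{v}\|_2^2=2$, then bound the two squared norms by the opposite sides of the RIP inequality --- is exactly the canonical proof of that result, so it fills the omitted step in the intended way.
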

We also make use of the following lemma which is can be found as Lemma 11 in the Supplementary material of \citep{loh2011high}. It will be useful to denote the $\ell_1$ ball as $\mathbb{B}_1(r) = \{x \in \mathbb{R}^{d} : \|x\|_1 \leq r \}$ and similarly for $\ell_2$ and $\ell_0$ balls as  $\mathbb{B}_{2}(r),\mathbb{B}_{0}(r)$, respectively.
\begin{lemma}
\label{lem:balls}
For any integer $s \geq 1$, we have 
\begin{align*}
    \mathbb{B}_1(\sqrt{s}) \cap \mathbb{B}_2(1) \subseteq  
    \mathrm{cl}\Big\{ \mathrm{conv}\Big\{\mathbb{B}_0(s) \cap \mathbb{B}_2\big(3\big) \Big\} \Big\}
\end{align*}
where $\mathrm{cl}$ and $\mathrm{conv}$ denote the topological closure and convex hull, respectively.
\end{lemma}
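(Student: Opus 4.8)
The plan is to prove the inclusion constructively, by an explicit block decomposition of an arbitrary $x \in \mathbb{B}_1(\sqrt{s}) \cap \mathbb{B}_2(1)$ that exhibits $x$ as a finite convex combination of $s$-sparse vectors of bounded $\ell_2$ norm; this already places $x$ in the convex hull, so the closure on the right-hand side is obtained for free. First I would fix such an $x$ and relabel coordinates so that $|x_{(1)}| \geq |x_{(2)}| \geq \dots \geq |x_{(d)}|$, then partition the index set into consecutive blocks $T_1, T_2, \dots, T_m$ of size $s$ (with $T_m$ possibly smaller), where $T_1$ holds the $s$ largest-magnitude coordinates, $T_2$ the next $s$, and so on. Writing $x_{T_i}$ for the restriction of $x$ to $T_i$ (zero outside $T_i$), each $x_{T_i}$ lies in $\mathbb{B}_0(s)$ and $x = \sum_{i=1}^m x_{T_i}$.

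The quantitative heart of the argument is to bound $\sum_{i=1}^m \|x_{T_i}\|_2$. Because the coordinates are sorted, for every $i \geq 2$ each entry of $x_{T_i}$ has magnitude no larger than the average magnitude over the preceding block $T_{i-1}$, so that $\|x_{T_i}\|_\infty \leq \frac{1}{s}\|x_{T_{i-1}}\|_1$ and hence $\|x_{T_i}\|_2 \leq \sqrt{s}\,\|x_{T_i}\|_\infty \leq \frac{1}{\sqrt{s}}\|x_{T_{i-1}}\|_1$ (this also covers the shorter last block, since $|T_m| \leq s$). Summing over $i \geq 2$ collapses the $\ell_1$ mass into a single term, giving $\sum_{i \geq 2}\|x_{T_i}\|_2 \leq \frac{1}{\sqrt{s}}\|x\|_1 \leq 1$, while the leading block satisfies $\|x_{T_1}\|_2 \leq \|x\|_2 \leq 1$. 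Therefore $\sum_{i=1}^m \|x_{T_i}\|_2 \leq 2$. I expect this to be the main obstacle, as it is the one place where both defining constraints $\|x\|_1 \leq \sqrt{s}$ and $\|x\|_2 \leq 1$ are simultaneously used, and where the sorting/averaging estimate must be set up correctly.

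Given this bound, the convex combination is routine bookkeeping. For each nonzero block I would set $\lambda_i := \|x_{T_i}\|_2/2$, so that $\sum_i \lambda_i \leq 1$, and $u_i := x_{T_i}/\lambda_i$, which is $s$-sparse with $\|u_i\|_2 = 2 \leq 3$, hence $u_i \in \mathbb{B}_0(s) \cap \mathbb{B}_2(3)$. Assigning the remaining weight $1 - \sum_i \lambda_i \geq 0$ to the zero vector (which also lies in $\mathbb{B}_0(s) \cap \mathbb{B}_2(3)$ as $s \geq 1$) yields $x = \sum_i \lambda_i u_i + \big(1 - \sum_i \lambda_i\big)\cdot 0$ as a genuine convex combination of elements of $\mathbb{B}_0(s) \cap \mathbb{B}_2(3)$. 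Thus $x \in \mathrm{conv}\{\mathbb{B}_0(s) \cap \mathbb{B}_2(3)\}$, and \emph{a fortiori} $x$ lies in the closure of this convex hull, which gives the claimed inclusion. The closure in the statement is not strictly necessary in finite dimension, and the gap between the constant $2$ produced here and the stated $3$ leaves a comfortable margin.
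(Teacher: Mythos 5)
Your proof is correct. Note that the paper itself does not prove this lemma at all --- it simply cites Lemma~11 of the supplementary material of Loh and Wainwright, so there is no in-paper argument to match against; the standard proof in that reference proceeds via a comparison of support functions (polar duality), which is why the closure of the convex hull appears in the statement. Your argument is a self-contained constructive alternative: the sorted block decomposition $x = \sum_i x_{T_i}$, the shelling estimate $\|x_{T_i}\|_2 \leq \tfrac{1}{\sqrt{s}}\|x_{T_{i-1}}\|_1$ for $i \geq 2$, and the resulting bound $\sum_i \|x_{T_i}\|_2 \leq \|x\|_2 + \tfrac{1}{\sqrt{s}}\|x\|_1 \leq 2$ are all correct, and the bookkeeping with weights $\lambda_i = \|x_{T_i}\|_2/2$ and the zero vector absorbing the slack is valid (each $u_i$ has $\ell_2$ norm exactly $2 \leq 3$, and no division by zero occurs since you only define $\lambda_i$ for nonzero blocks). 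What your route buys is twofold: it exhibits membership in the convex hull itself rather than its closure, and it yields the sharper radius $2$ in place of $3$; what it gives up relative to the duality argument is only brevity in settings where the support function of $\mathbb{B}_0(s)\cap\mathbb{B}_2(1)$ (the $\ell_2$ norm of the top $s$ entries) is already in hand.
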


\subsection{Basis Pursuit}
\label{sec:Proof:BP}
For completeness we recall some fundamental properties of the Basis Pursuit problem. Consider a sparse signal $x^{\star}$, sensing matrix $A \in \mathbb{R}^{N \times d}$ and response $y  = Ax^{\star}$. The Basis Pursuit problem is then defined as 
\begin{align}
\label{equ:BasisPursuit:app}
    \|x\|_1  & \text{ subject to} \\
    & Ax = y.
    \nonumber
\end{align}
Denote the solution to the above as $x^{\text{BP}}$. Suppose that $x^{\star}$ is supported on the set $S \subset \{1,\dots,d\}$. Then it is well know \citep{foucart2013invitation} that $x^{\text{BP}}$ is both unique and satisfies $x^{\text{BP}}= x^{\star}$ if and only if $A$ satisfies the \emph{restricted null space property} with respect to $S$, that is, 
\begin{align}
\label{equ:RestrictedNullSpace:app}
    2 \|(x)_{S}\|_1 \leq  \|x\|_1 \text{ for any } x \in \text{Ker}(A) \backslash \{0\}.
\end{align}
Following the proof sketch in the manuscript, we now proceed to reformulate the Total Variation Pursuit Problem \eqref{equ:TVBP} into a Basis Pursuit problem \eqref{equ:BasisPursuit:app}.

\subsection{Reformulating Total Variation Basis Pursuit into Basis Pursuit}
\label{sec:Proof:reformulated}
We now describe the steps in reformulating the Total Variation Basis Pursuit Denoising problem \eqref{equ:TVBP} into a Basis Pursuit problem \eqref{equ:BasisPursuit:app}. We begin by introducing some notation. For node $v \in V$, denote the set of edges making a path from node $v$ to the root node $1$ by 
$\pi(v) = \{ \{v , w_1 \} , \{ w_1 , w_2 \},\dots,\{w_{k_v -1},w_{k_v}\},\{w_{k_v},1\} \} \subseteq E $  
where $k_v \geq 1$ is the number of intermediate edges. In the case $k_v =0$ there is only a single edge and so we write $\pi(v) = \{v,1\} \in E$. Meanwhile, for the root node itself $v=1$ we simply have the singleton $\pi(v)= \pi(1) = \{1\}$, and thus, we have the root node included $v \in \pi(v)$ but no edges i.e.\ $e \notin \pi(v)$ for any $e \in E$. For each edge $e = \{v,w\} \in E$ the difference is denoted $\Delta_e = x_v - x_w$, and so the vector associated to any node $x_v$ can be decomposed into the root node $x_1$ plus the differences along the path $x_v = x_1 + \sum_{e \in \pi(v)}  \Delta_{e}$. Similarly, the signal associated to each node $x^{\star}_v$ can be decomposed into differences of signals associated to the edges $e = \{v,w\} \in E$ with  $\Delta^{\star}_{e}  = x_v^{\star} - x_{w}^{\star}$.

With this notation we can then reformulate  \eqref{equ:TVBP} in terms of $x_1$ and $\{\Delta_e \}_{e \in E}$ as follows 
\begin{align}
\label{equ:DecentralisedPursit:Reformulated}
\min_{x_1, \{\Delta_e\}_{e \in E} } \|x_1\|_1 & + \sum_{e = (v,w) \in E } \|\Delta_e \|_1 \text{ subject to }  \\
 A_v\Big( x_1 & + \sum_{e \in \pi(v) } \Delta_e \Big) = y_v \quad \forall v \in V.
\nonumber
\end{align}
Optimisation problem \eqref{equ:DecentralisedPursit:Reformulated} is now in terms of a standard basis pursuit problem \eqref{equ:BasisPursuit} with, if edges are labeled with integers, the vector $x \! = \! (x_1,\Delta_1,\dots,\Delta_{|E|})$, true signal $x^{\star} \! = \! (x_1^{\star},\Delta_1^{\star},\dots,\Delta^{\star}_{|E|})$, and a matrix $A$. 
To be precise, the matrix $A$ can be defined in terms of blocks 
$A = (H_1^{\top} , \dots , H_n^{\top} )^{\top} \! \in \! \mathbb{R}^{ (\sum_{v \in V} N_{v}) \times  n p }$ with each block  
$H_v \! \in \! \mathbb{R}^{N_v \times n p}$ for $v \! \in \! V$. Each block then defined as 
$
H_v \! = \! \begin{pmatrix}
H_{v1},H_{v2},\dots,H_{vn}
\end{pmatrix}
$
with, for $i \! = \! 1,\dots,n$, the matrix $H_{vi}  \! = \! A_v$ if node $i$ is included on the path going from node $v$ to the root node $1$ i.e.\ $i \! \in \! \pi(v)$, and $0$ otherwise. 

The signal associated to the reformulated problem \eqref{equ:DecentralisedPursit:Reformulated} remains sparse and is supported on a set $S$ with a particular structure due to encoding the sparsity of the differences $\{\Delta^{\star}_{e} \}_{e \in E}$. Specifically, the set $S$ contains the entries from $\{1,\dots,p\}$ aligned with $S_1$ and, labeling the edges $e \in E$ with the integers $i =1,\dots,|E|$, the elements from $\{1,\dots,p\}$ associated to $S_e$ offset by $i \times p$.  Now that \eqref{equ:TVBP} is in terms of a Basis Pursuit problem, its success relies on the matrix $A$ satisfying the Restricted Null Space Property \eqref{equ:RestrictedNullSpace:app} with respect to the sparsity set $S$. This can be rewritten in terms of $x_1$ and $\{\Delta_{e}\}_{e \in E}$ as follows 
\begin{align}
\label{equ:RestrictedNullSpace:Reformulated}
    \|(x_1)_{S_1} \|_1 
    + 
    \sum_{e \in E} \|(\Delta_{e})_{S_e}\|_1
    & \leq 
    \frac{1}{2}
    \Big( 
    \|x_1\|_1 
    + 
    \sum_{e \in E} \|\Delta_{e} \|_1
    \Big) \\
    &\quad\quad\quad\quad\quad\quad \text{ for }
    x_1 + \sum_{e \in \pi(v) } \Delta_e  \in \text{Ker}(A_{v}) \backslash \{0\} 
    \text{ for } v \in V.
    \nonumber
\end{align}
From now on we will let $S_1$ denote the largest $s$ entries of $x_1$, and for $e \in E$ the set $S_{e}$ as the largest $s^{\prime}$ entries of $\Delta_{e}$. We that we begin with the proof of Theorem \ref{thm:IdenticalMatrices} in Section \ref{sec:EqualMatrices}, as the analysis of matching non-root matrices $A_v = A_w$ for $v,w \not= 1$ is simpler. This will then be followed by the proof of Theorem \ref{thm:Starlike} in Section \ref{sec:proof:starlike}.

\subsection{Proof of Theorem \ref{thm:IdenticalMatrices}}
\label{sec:EqualMatrices}
We now provide the proof of Theorem \ref{thm:IdenticalMatrices}. We begin with the following lemma which follows a standard shelling technique, see for instance \citep{foucart2013invitation}. 
\begin{lemma}
\label{lem:RIPtoL2}
Suppose the matrix $B \in \mathbb{R}^{N \times d}$ satisfies Restricted Isometry Property up-to sparsity level $d \geq k > 0$ with constant $\delta_{k} \in [0,1)$. If $x \in \text{Ker}(B)\backslash \{0\}$
 then for any $U \subseteq \{1,\dots,d\}$ such that $|U| = k$ we have 
 \begin{align*}
     \|(x)_{U} \|_2 \leq 
     \frac{\delta_{2k}}{1-\delta_{k}} \frac{1}{\sqrt{k}} \|x\|_1
 \end{align*}
\end{lemma}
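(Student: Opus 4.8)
The plan is to prove Lemma \ref{lem:RIPtoL2} via the standard \emph{shelling} (or bucketing) argument, which controls the action of a RIP matrix on a vector in its kernel by decomposing the coordinates outside $U$ into blocks of size $k$ ordered by decreasing magnitude. First I would set up the decomposition: let $U = U_0$ be the given index set of size $k$, and partition the complement $\{1,\dots,d\}\setminus U_0$ into successive blocks $U_1, U_2, \dots$ where $U_1$ indexes the $k$ largest-in-magnitude entries of $x$ restricted to $U_0^c$, $U_2$ the next $k$ largest, and so on. Since $x \in \mathrm{Ker}(B)$ we have $B(x)_{U_0} = -\sum_{j \geq 1} B(x)_{U_j}$, so that applying the lower RIP bound on the $k$-sparse vector $(x)_{U_0}$ gives
\begin{align*}
(1-\delta_k)\|(x)_{U_0}\|_2^2 \leq \|B(x)_{U_0}\|_2^2 = \Big| \Big\langle B(x)_{U_0}, -\sum_{j\geq 1} B(x)_{U_j} \Big\rangle \Big| \leq \sum_{j\geq 1} |\langle B(x)_{U_0}, B(x)_{U_j}\rangle|.
\end{align*}
Each cross term is then bounded using Proposition \ref{prop:DisjointRIP}, since $U_0$ and $U_j$ have disjoint supports and each block is $k$-sparse, yielding $|\langle B(x)_{U_0}, B(x)_{U_j}\rangle| \leq \delta_{2k}\|(x)_{U_0}\|_2\|(x)_{U_j}\|_2$.

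Dividing through by $\|(x)_{U_0}\|_2$ gives $(1-\delta_k)\|(x)_{U_0}\|_2 \leq \delta_{2k}\sum_{j\geq 1}\|(x)_{U_j}\|_2$, so it remains to convert the sum of $\ell_2$ block norms into the $\ell_1$ norm of $x$. Here I would invoke the key shelling estimate: because the entries in $U_j$ are all dominated in magnitude by the average of the entries in $U_{j-1}$, one has the standard bound $\|(x)_{U_j}\|_2 \leq \frac{1}{\sqrt{k}}\|(x)_{U_{j-1}}\|_1$ for each $j\geq 1$. Summing over $j$ telescopes the right-hand side into $\frac{1}{\sqrt{k}}\sum_{j\geq 0}\|(x)_{U_j}\|_1 = \frac{1}{\sqrt{k}}\|x\|_1$ (the blocks $U_0, U_1,\dots$ partition all coordinates). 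Combining the two displays yields
\begin{align*}
\|(x)_{U_0}\|_2 \leq \frac{\delta_{2k}}{1-\delta_k}\frac{1}{\sqrt{k}}\|x\|_1,
\end{align*}
which is exactly the claimed inequality.

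The one delicate point I would be careful about is that the inequality $\|(x)_{U_j}\|_2 \leq \frac{1}{\sqrt{k}}\|(x)_{U_{j-1}}\|_1$ requires the blocks to be ordered by decreasing magnitude \emph{on the complement of $U_0$}, and the telescoping sum must be anchored so that the magnitudes in $U_1$ are controlled by $\|(x)_{U_0}\|_1$ only if $U_0$ were itself the largest entries — which is not assumed here, as $U_0$ is an arbitrary set of size $k$. I expect this to be the main obstacle: one must order the blocks $U_1, U_2,\dots$ among themselves by magnitude but take $U_0$ as given, and the chaining bound $\|(x)_{U_{j}}\|_2 \le \tfrac{1}{\sqrt k}\|(x)_{U_{j-1}}\|_1$ only needs to hold for $j \ge 2$ (relating consecutive complement-blocks), while the $j=1$ term $\|(x)_{U_1}\|_2$ is bounded directly by $\|(x)_{U_1}\|_1/\sqrt{\,}$-type estimates or simply absorbed, so that the final sum still collapses to $\frac{1}{\sqrt k}\|x\|_1$. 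Handling this indexing cleanly — so that every coordinate of $x$ is accounted for exactly once in the final $\|x\|_1$ and the constant $\delta_{2k}/(1-\delta_k)$ is not degraded — is the crux of writing the argument correctly.
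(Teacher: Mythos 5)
Your overall strategy is exactly the paper's: the lower RIP bound on $(x)_{U}$, the kernel identity $B(x)_{U}=-B(x)_{U^{c}}$, cross terms controlled by Proposition \ref{prop:DisjointRIP}, and a shelling decomposition of $U^{c}$ into size-$k$ blocks ordered by decreasing magnitude. You also correctly locate the one delicate step. The difficulty is that neither of your proposed resolutions of that step actually works, so as written the argument still has a hole at the $j=1$ block.

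Concretely, after dividing by $\|(x)_{U}\|_{2}$ you need $\sum_{j\geq 1}\|(x)_{U_j}\|_{2}\leq \tfrac{1}{\sqrt{k}}\|x\|_{1}$. The chain $\|(x)_{U_j}\|_{2}\leq \sqrt{k}\,\|(x)_{U_j}\|_{\infty}\leq \tfrac{1}{\sqrt{k}}\|(x)_{U_{j-1}}\|_{1}$ is valid for $j\geq 2$ and in total accounts for at most $\tfrac{1}{\sqrt{k}}\sum_{j\geq 1}\|(x)_{U_j}\|_{1}=\tfrac{1}{\sqrt{k}}\|(x)_{U^{c}}\|_{1}$; what remains is $\|(x)_{U_1}\|_{2}$, which must be charged to the leftover mass $\tfrac{1}{\sqrt{k}}\|(x)_{U}\|_{1}$. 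Your suggestion to bound this term ``directly by $\|(x)_{U_1}\|_{1}/\sqrt{k}$-type estimates'' fails because the norm inequality goes the wrong way: for a vector supported on $k$ coordinates one has $\|v\|_{2}\geq \|v\|_{1}/\sqrt{k}$, so $\|(x)_{U_1}\|_{2}$ cannot in general be dominated by $\tfrac{1}{\sqrt{k}}\|(x)_{U_1}\|_{1}$, and ``absorbing'' it has nowhere to go. The paper closes this step by asserting $\|(x)_{U_1}\|_{2}\leq \tfrac{1}{\sqrt{k}}\|(x)_{U}\|_{1}$, which holds precisely when every entry indexed by $U_1$ is dominated by the average magnitude of the entries in $U$, i.e.\ when $U$ is the set of the $k$ largest entries of $x$. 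That is how the lemma is invoked everywhere in the paper ($S_1$ and $S_e$ are defined as top-$s$ and top-$s^{\prime}$ sets, and the set $U$ in the proof of \eqref{equ:CombinedRIP} is the top-$\ell$ set), so the intended reading is that $U$ is a top-$k$ support set. To complete your proof you should either add that assumption explicitly, or, if you want genuinely arbitrary $U$, switch to the standard Cand\`es-style variant that treats $U\cup U_1$ as a single $2k$-sparse block and bounds $\|(x)_{U\cup U_1}\|_{2}$ directly, at the cost of a slightly different constant.
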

\begin{proof}[Proof of Lemma \ref{lem:RIPtoL2}]
Noting that $Bx_{U}  = - B x_{U^{c}}$ and using the restricted isometry property of $B$, we can bound 
\begin{align*}
    (1 - \delta_{k})\|(x)_{U}\|_2^2 
    \leq \|B(x)_{U}\|_2^2 = -\langle B(x)_{U},B(x)_{U^c}\rangle.
\end{align*}
Now decompose $U^{c}$ into disjoint sets $\{U_{j}\}_{j=1,2,\dots}$ of size $k$ so that $U^{C} = U_1 \cup U_{2} \cup \dots$ . Structure the sets so that $U_{1}$ is the largest $k$ entries of $(x)_{U^{c}}$, $U_{2}$ is the largest $k$ entries of $(x)_{(U \cup U_1)^{c}}$ and so on. Note that for $j=2,\dots,$ that $\|(x)_{B_{j}}\|_2 \leq \sqrt{k} \|(x)_{B_{j}}\|_\infty \leq \frac{1}{\sqrt{k}} \|(x)_{B_{j-1}}\|_1$. While $\|(x)_{B_{1}}\|_2 \leq \frac{1}{\sqrt{k}} \|x_{U}\|_1$. Returning to the equation above this allows us to bound with Proposition \ref{prop:DisjointRIP}, 
\begin{align*}
    (1 - \delta_{k})\|(x)_{U}\|_2^2 
    & \leq \delta_{2k} \|(x)_{U}\|_2 \sum_{j \geq 1} \|(x)_{B_j}\|_2 \\
    & \leq 
    \frac{\delta_{2k}}{\sqrt{k}}\|(x)_{U}\|_2 \big( \|(x)_{U}\|_1 + \sum_{j \geq 1} \|(x)_{B_{j-1}}\|_1 \big) \\
    & = \frac{\delta_{2k}}{\sqrt{k}}\|(x)_{U}\|_2 \|x\|_1
\end{align*}
Dividing both sides by $(1-\delta_{k}) \|(x)_{U}\|_2$ yields the result.
\end{proof}
We now proceed to the proof of Theorem \ref{thm:IdenticalMatrices}.

\begin{proof}[Proof of Theorem \ref{thm:IdenticalMatrices}]
Recall that it is sufficient to demonstrate that the restricted null space property for the reformulated problem \eqref{equ:RestrictedNullSpace:Reformulated} is satisfied with high probability. 
In this case we then have  $A_{v} = \frac{1}{\sqrt{N_{\text{Non-root}}}}A_{\text{Non-Root}}$ for $v \in V \backslash \{0\}$, and $A_{1} = \frac{1}{\sqrt{N_{\text{Root}}}}A_{\text{Root}}$.  We then begin by assuming that $\frac{1}{\sqrt{N_{\text{Non-root}}}}A_{\text{Non-Root}}$ satisfies Restricted Isometry Property up-to sparsity level $k^{\prime}$ with constant $\delta^{\text{Non-root}}_{k^{\prime}} \in [0,1)$ and $\frac{1}{\sqrt{N_{\text{Root}}}}A_{\text{Root}}$ satisfies Restricted Isometry Property up-to sparsity level $k$ with constant $\delta^{\text{Root}}_{k} \in [0,1)$. Let us also suppose that $k \geq k^{\prime}$. We will then return to satisfying this condition with high-probability at the end of the proof.

The proof proceeds by bounding $\|(x_1)_{S_1}\|_1, \|(\Delta)_{S_{e}}\|_1$ by using that $x=(x_1,\Delta_1,\dots,\Delta_{|E|}) \in \text{Ker}(A) \backslash \{0\}$. We split into three cases: the root note $\|(x_1)_{S_1}\|_1$; the term $\|(\Delta)_{S_{e}}\|_1$ for edges $e = (v,w)$ not directly connected to the root  $v,w \not=1$; and the term $\|(\Delta)_{S_{e}}\|_1$ for edges $e=(v,w)$ joined to the root $v=1$ or $w=1$. Each is now considered in its own paragraph, with the combination in a fourth paragraph. 

\textbf{Root Node} Note that $x_1 \in \text{Ker}(A_{\text{Root}}) \backslash \{0\}$ therefore from  Lemma \ref{lem:RIPtoL2} and the inequality $\|(x_1)_{S_{1}}\|_1 \leq \sqrt{s} \|(x_1)_{S_1}\|_2 $  we get the upper bound 
\begin{align*}
    \|(x_1)_{S_1}\|_1 \leq 
    \frac{\delta^{\text{Root}}_{2s}}{1-\delta_{s}^{\text{Root}}} \|x_1\|_1,
\end{align*}
as required.

\textbf{Edges not connected to the root}
For any edge $\widetilde{e} = (v,w) \in E$ not connected to the root node so $v,w \not= 1$, note we have $\Delta_{\widetilde{e}} \in \text{Ker}(A_{\text{Non-root}}) \backslash \{0\}$.  To see this, each vector is in the same null-space $x_1 + \sum_{e \in \pi(v) } \Delta_e,x_1 + \sum_{e \in \pi(w) } \Delta_e \in \text{Ker}(A_{\text{Non-root}}) \backslash \{0\} $ so their difference is also. That is if  $w$ is the furthest from the root node we can write 
$x_1 + \sum_{e \in \pi(w) } \Delta_{e} = x_1 + \sum_{e \in \pi(v) } \Delta_{e} + \Delta_{\widetilde{e}}$ and therefore
\begin{align*}
    \Delta_{\widetilde{e}} = \Big(x_1 + \sum_{e \in \pi(w) } \Delta_{e}\Big)
    - 
    \Big(x_1 + \sum_{e \in \pi(v) } \Delta_{e}\Big)
    \in \text{Ker}(A_{\text{Non-root}}). 
\end{align*}
An identical calculation can be done for the case of when $v$ is furthest from the root node. 
Therefore  Lemma \ref{lem:RIPtoL2}  yields the upper bound
\begin{align*}
    \|(\Delta_{\widetilde{e}})_{S_{\widetilde{e}}}\|_1 \leq 
    \frac{\delta^{\text{Non-root}}_{2 s^{\prime}}}{1-\delta^{\text{Non-root}}_{s^{\prime}}}
    \|\Delta_{\widetilde{e}}\|_1,
\end{align*}
as required.

\textbf{Edges connected to the root}
For edges connecting the root node so $\widetilde{e}  = (v,w) \in E$ such that $v = 1$ or $w = 1$, begin by  adding and subtracting $(x_1)_{S_{\widetilde{e}}}$ to decompose 
\begin{align*}
    \|(\Delta_{\widetilde{e}})_{S_{\widetilde{e}}}\|_1 
    \leq 
    \|(x_1)_{S_{\widetilde{e}}}\|_1
    + \|(x_1 + \Delta_{\widetilde{e}})_{S_{\widetilde{e}}}\|_1.
\end{align*}
Now if $|S_{\widetilde{e}}| \leq k^{\prime}$ we have from Lemma \ref{lem:RIPtoL2} the inequality $\|(x_1)_{S_{\widetilde{e}}}\|_1 \leq \sqrt{k^{\prime}} \|x_{S_{\widetilde{e}}}\|_2 \leq \frac{\delta^{\text{Root}}_{2k^{\prime}}}{1-\delta^{Root}_{k^{\prime}}} \|x_1\|_1$ since $x_1 \in \text{Ker}(A_\text{Root})\backslash \{0\}$. As well as from the fact that $x_1 + \Delta_{\widetilde{e}}  \in \text{Ker}(A_{\text{Non-root}})\backslash \{0\}$ the upper bound $\|(x_1 + \Delta_{\widetilde{e}})_{S_{\widetilde{e}}}\|_1 \leq \frac{\delta^{\text{Non-root}}_{2k^{\prime}}}{1-\delta^{\text{Non-root}}_{k^{\prime}}} \|x_1 + \Delta_{\widetilde{e}}\|_1$. Combining these bounds we get 
\begin{align*}
    \|(\Delta_{\widetilde{e}})_{S_{\widetilde{e}}}\|_1 
    & \leq  \frac{\delta^{\text{Root}}_{2k^{\prime}}}{1-\delta^{\text{Root}}_{k^{\prime}}} \|x_1\|_1
    + 
    \frac{\delta^{\text{Non-root}}_{2k^{\prime}}}{1-\delta^{\text{Non-root}}_{k^{\prime}}}
    \|x_1 + \Delta_{\widetilde{e}}\|_1\\
    & \leq 
    \Big(  \frac{\delta^{\text{Root}}_{2k^{\prime}}}{1-\delta^{Root}_{k^{\prime}}}
    + 
    \frac{\delta^{\text{Non-root}}_{2k^{\prime}}}{1-\delta^{\text{Non-root}}_{k^{\prime}}}
    \Big)
    \|x_1\|_1 
    + 
    \frac{\delta^{\text{Non-root}}_{2k^{\prime}}}{1-\delta^{\text{Non-root}}_{k^{\prime}}}
    \|\Delta_{\widetilde{e}}\|_1
\end{align*}

\textbf{Combining the upper bounds}
Let us now combine the upper bounds for $\|(x_1)_{S_1}\|_1$ and $\|(\Delta_{\widetilde{e}})_{S_{e}}\|_1$ with $e \in E$. Summing them up and noting that there are $\text{Deg}(1)$ edges connecting the root yields 
\begin{align*}
    \|(x_1)_{S_1}\|_1
    + 
    \sum_{e \in E} \|(\Delta_{e})_{S_{e}}\|_1
    & \leq 
    \underbrace{ 
     \Big( \frac{\delta^{\text{Root}}_{2k}}{1-\delta^{Root}_{k}}  + \text{Deg}(1)\frac{\delta^{\text{Root}}_{2k^{\prime}}}{1-\delta^{Root}_{k^{\prime}}}
    + 
    \text{Deg}(1) \frac{\delta^{\text{Non-root}}_{2k^{\prime}}}{1-\delta^{\text{Non-root}}_{k^{\prime}}}
    \Big)
    }_{\textbf{Multiplicative Term}}\\
    & \quad\quad\quad\quad\quad\quad\quad\quad\quad\quad\quad\quad\quad\quad\quad\quad \times \Big( \|x_1\|_1 
    + \sum_{e \in E} \|\Delta_e\|_1
    \Big)
\end{align*}
We then require $ \textbf{Multiplicative Term} \leq 1/2$ for the restricted Null space condition to be satisfied. Now, since $\delta^{\text{Root}}_{2k} \geq \delta^{\text{Root}}_{k}$ and $\delta^{\text{Non-root}}_{2k^{\prime}} \geq \delta^{\text{Non-root}}_{k^{\prime}}$, it is then sufficient for the Restricted Isometry Constants to satisfy the upper bounds 
\begin{align*}
    \delta^{\text{Root}}_{2s} \leq \frac{1}{3} \\
    \delta^{\text{Root}}_{2s^{\prime}} 
    \leq 
    \frac{1}{2 \text{Deg}(1)}\\
    \delta^{\text{Non-root}}_{2s^{\prime}} 
    \leq 
    \frac{1}{2\text{Deg}(1)} 
\end{align*}
Leveraging Theorem \ref{thm:RIPConc} and taking a union bound, this is then satisfied with probability greater than $1-\epsilon$ when  
\begin{align*}
    N_{\text{Root}} \geq 
    18 C  
    \big( \max\{s,\text{Deg}(1)^2 s^{\prime} \} \log(ed) + \text{Deg}(1)^2 \log(1/\epsilon) \big),
    \\
    N_{\text{Non-root}} 
    \geq 
    18 C \text{Deg}(1)^2 
    \big( s^{\prime} \log(ed) + \log(1/\epsilon) \big).
\end{align*}
This concludes the proof. 
\end{proof}

\subsection{Proof of Theorem \ref{thm:Starlike}}
\label{sec:proof:starlike}
We now present the proof of Theorem \ref{thm:Starlike}.  
\begin{proof}[Proof of Theorem \ref{thm:Starlike}]
Once again, recall it is sufficient to demonstrate the Restricted Null Space Property  \eqref{equ:DecentralisedPursit:Reformulated} is satisfied in this case. Following the proof of theorem \ref{thm:IdenticalMatrices}, let the restricted isometry constant of $A_1$ up-to sparsity level $k$ be denoted $\delta^{\text{Root}}_{k} \in [0,1)$. Meanwhile, let $\delta^{\text{Non-root}}_{k^{\prime}}$ now denote the \emph{maximum} restricted isometry constant of up-to sparsity level $k^{\prime}$ of the matrices associated to non-root agents i.e. $\{A_{v}\}_{v \in \backslash \{1\}}$. Furthermore, let $\widetilde{A}_{\text{Combined}} \in \mathbb{R}^{(n-1)N_{\text{Non-root}} \times d}$ be constructed from the row-wise concatenation of the non-root agent matrices $\{A_{v}\}_{v \in V \backslash \{1\}}$. Similarly, let $\delta^{\text{Combined}}_{\widetilde{k}} \in [0,1)$ denote the restricted isometry constant of $A_{\text{Combined}} : = \widetilde{A}_{\text{Combined}}/\sqrt{n-1}$ up to sparsity level $\widetilde{k}$. 

Following the proof of Theorem \ref{thm:IdenticalMatrices} we leverage that $x = (x_1,\Delta_1,\dots,\Delta_{|E|}) \in \text{Ker}(A) \backslash \{0\}$ to upper bound $\|(x_1)_{S_1}\|_1, \|(\Delta_e)\|_1$ for $e \in E$. In particular, we consider have three paragraphs: one associated to bounding $\|(x_1)_{S_1}\|_1$; one for bounding $\|(\Delta_e)\|_1$ for edges $e = (v,w) \in E$ not connected to the root $v,w \not=1$; and  one for bounding $\|(\Delta_e)\|_1$ for edges $e=(v,w)$ joined to the root $v=1$ or $w=1$. The fourth paragraph will then combined these bounds.

\textbf{Root Node} Since $x_1 \in \text{Ker}(A_1)$ we immediately have from Lemma \ref{lem:RIPtoL2} the upper bound for any $U \subset \{1,\dots,d\}$ such that $|U| = k$
\begin{align}
\label{equ:RootBound}
    \|(x_1)_{U}\|_2 \leq 
    \frac{\delta^{\text{Root}}_{2k}}{1-\delta^{\text{Root}}_{k}} 
    \frac{1}{\sqrt{k}}
    \|x_1\|_1.
\end{align}
Setting $U = S_1$ and recalling and following the proof of Theorem \ref{thm:IdenticalMatrices}, this immediately bounds $\|(x)_{S_{1}}\|_1 \leq \frac{\delta^{\text{Root}}_{2s}}{1-\delta^{\text{Root}}_{s}} \|x_1\|_1 $.

\textbf{Edges not connect to the root} Consider any edge $\widetilde{e} = (v,w) \in E$ not directly connected to the root i.e.\ $v,w \not= 1$. Without loss in generality suppose $w$ is the furthest from the root. This allows us to rewrite in terms of the difference
\begin{align*}
    \Delta_{\widetilde{e}} 
    = 
    \Big(  \sum_{e \in \pi(w)} \Delta_{e} \Big)
    - 
     \Big( \sum_{e \in \pi(v)} \Delta_{e} \Big).
\end{align*}
Each of the vectors are in potentially different null-spaces, and therefore, we bound each separately. Applying triangle inequality we then get 
\begin{align*}
    \|(\Delta_{\widetilde{e}})_{S_{\widetilde{e}}}\|_1
    & \leq 
    \Big\|
    \Big(  \sum_{e \in \pi(w)} \Delta_{e} 
    \Big)_{S_{\widetilde{e}}} 
    \Big\|_1 
    + 
    \Big\|
    \Big(  \sum_{e \in \pi(v)} \Delta_{e} 
    \Big)_{S_{\widetilde{e}}}
    \Big\|_1 \\
    & \leq 
    \Big\|
    \Big(  \sum_{e \in \pi(w)} \Delta_{e} 
    \Big)_{U_{1}} 
    \Big\|_1 
    + 
    \Big\|
    \Big(  \sum_{e \in \pi(v)} \Delta_{e} 
    \Big)_{U_{2}}
    \Big\|_1 \\
\end{align*}
where $U_1,U_2$ are the largest $s^{\prime}$ entries of $\sum_{e \in \pi(w)} \Delta_{e} $ and $\sum_{e \in \pi(v)} \Delta_{e} $ respectively. For the first term we have $x_1 + \sum_{e \in \pi(w)} \Delta_{e} \in \text{Ker}(A_{w})$ and therefore 
\begin{align*}
    (1-\delta^{\text{Non-root}}_{k^{\prime}})
    \Big\|
    \Big(  \sum_{e \in \pi(w)} \Delta_{e} 
    \Big)_{U_{1}} 
    \Big\|_2^2 & \leq 
    \Big\|
    A_{w}
    \Big(  \sum_{e \in \pi(w)} \Delta_{e}  
    \Big)_{U_1} 
    \Big\|_2^2\\
    & =
    - \Big\langle 
    A_w \Big(  \sum_{e \in \pi(w)} \Delta_{e} 
    \Big)_{U_{1}},
    A_w \Big(  \sum_{e \in \pi(w)} \Delta_{e} \Big)_{U_{1}^{c}}
    + 
    A_w x_1 \Big\rangle
\end{align*}
where we note that $A_{w}(x_1 + \sum_{e \in \pi(w)} \Delta_{e} ) = 0$ and therefore $A_{w}\Big(\sum_{e \in \pi(w)} \Delta_{e} \Big)_{U_{1}} = -A_{w}\Big(\sum_{e \in \pi(w)} \Delta_{e} \Big)_{U_{1}^c} - A_{w} x_1 $.
Following the shelling argument in the proof of Lemma \ref{lem:RIPtoL2} we can upper bound 
\begin{align*}
    \Big|\Big\langle 
    A_w \Big(  \sum_{e \in \pi(w)} \Delta_{e} 
    \Big)_{U_1},
    A_w \Big(  \sum_{e \in \pi(w)} \Delta_{e} \Big)_{U_1^{c}}
    \Big\rangle
    \Big|
    \leq 
    \frac{\delta^{\text{Non-root}}_{2s^{\prime}}}{\sqrt{s^{\prime}}} 
    \Big\|\Big(  \sum_{e \in \pi(w)} \Delta_{e} 
    \Big)_{U_1}
    \Big\|_2 \Big\|\sum_{e \in \pi(w)} \Delta_{e}\Big\|_1
\end{align*}
Upper bound the other inner product as
\begin{align*}
    \Big|\Big\langle 
    A_w \Big(  \sum_{e \in \pi(w)} \Delta_{e} 
    \Big)_{U_{1}},
    A_w x_1 \Big\rangle\Big|
    & \leq 
    \big\|A_w \Big(  \sum_{e \in \pi(w)} \Delta_{e} 
    \Big)_{U_{1}}\big\|_2 \|A_w x_1\|_2 
    \\
    & \leq 
    \sqrt{1 + \delta^{\text{Non-root}}_{s^{\prime}}} 
    \big\|
    \Big(  \sum_{e \in \pi(w)} \Delta_{e} 
    \Big)_{U_{1}}\big\|_2 \|A_w x_1\|_2 
\end{align*}
and dividing both sides by $(1-\delta^{\text{Non-root}}_{s^{\prime}}) \Big\|\Big(  \sum_{e \in \pi(w)} \Delta_{e} \Big)_{U_1} \Big\|_2$ then yields 
\begin{align*}
    \Big\|
    \Big(  \sum_{e \in \pi(w)} \Delta_{e} 
    \Big)_{U_1} 
    \Big\|_2
    \leq 
    \frac{\delta^{\text{Non-root}}_{2s^{\prime}}}{\sqrt{s^{\prime}}} \Big\|\sum_{e \in \pi(w)} \Delta_{e}\Big\|_1
    + 
    \frac{\sqrt{1+\delta^{\text{Non-root}}_{s^{\prime}}}}{1-\delta^{\text{Non-root}}_{s^{\prime}}}
    \|A_{w}x_1\|_2
\end{align*}
Repeating the steps above for the other node $v$ and going to $\ell_1$ norm from $\ell_2$ norm and bringing together the two bounds yields 
\begin{align}
\label{equ:EdgeBound1}
    \|(\Delta_{\widetilde{e}})_{S_{\widetilde{e}}}\|_1
    & \leq 
    \frac{\delta^{\text{Non-root}}_{2s^{\prime}}}{1-\delta^{\text{Non-root}}_{s^{\prime}}}
    \Big( \Big\|\sum_{e \in \pi(w)} \Delta_{e}\Big\|_1
    + 
    \Big\|\sum_{e \in \pi(v)} \Delta_{e}\Big\|_1
    \Big)\\
    & \quad\quad\quad\quad\quad\quad\quad 
    + 
    \frac{\sqrt{s^{\prime}} \sqrt{1 + \delta^{\text{Non-root}}_{s^{\prime}}}}{1-\delta^{\text{Non-root}}_{s^{\prime}}} 
    \big( \|A_{v}x_1\|_2 + \|A_w x_1 \|_2 \big)
    \nonumber
\end{align}

\textbf{Edges connecting to the root node} Consider an edge connected to the root node, that is, $\widetilde{e} = (v,w) \in E$ such that $v=1$ or $w = 1$. Without loss in generality, let us suppose that $w= 1$. We can then bound using Restricted Isometry Property 
\begin{align*}
    (1 - \delta_{s^{\prime}})^{\text{Non-root}}
    \|(\Delta_{\widetilde{e}})_{S_{\widetilde{e}}}\|_2^2
    & \leq
    \big\| A_{v}
    (\Delta_{\widetilde{e}})_{S_{\widetilde{e}}}
    \|_2^2\\
    & = 
    - \langle 
    A_v
    (\Delta_{\widetilde{e}})_{S_{\widetilde{e}}},
    A_v
    (\Delta_{\widetilde{e}})_{S_{\widetilde{e}}^c}
    + 
    A_v x_1 \rangle\\
    & \leq 
    \frac{\delta^{\text{Non-root}}_{2s^{\prime}} }{\sqrt{s^{\prime}}}
    \|
    (\Delta_{\widetilde{e}})_{S_{\widetilde{e}}}\|_2 
    \|\Delta_{\widetilde{e}}\|_1 
    + 
    \|A_v
    (\Delta_{\widetilde{e}})_{S_{\widetilde{e}}}\|_2
    \|A_v x_1 \|_2\\
    & \leq 
    \frac{\delta^{\text{Non-root}}_{2s^{\prime}} }{\sqrt{s^{\prime}}}
    \|
    (\Delta_{\widetilde{e}})_{S_{\widetilde{e}}}\|_2 
    \|\Delta_{\widetilde{e}}\|_1 
    + 
    \sqrt{1+\delta^{\text{Non-root}}_{s^{\prime}}}
    \|(\Delta_{\widetilde{e}})_{S_{\widetilde{e}}}\|_2
    \|A_v x_1 \|_2\\
\end{align*}
where for the equality we note that $x_1 + \Delta_{\widetilde{e}} \in \text{Ker}(A_v) \backslash \{0\}$ and therefore $A_{v}(\Delta_{\widetilde{e}})_{S_{\widetilde{e}}} = - A_{v}(\Delta_{\widetilde{e}})_{S_{\widetilde{e}}^c} - A_v x_1$. Meanwhile for the second inequality used a similar argument to previously. Dividing both sides by $(1-\delta^{\text{Non-root}}_{s^{\prime}}) \|(\Delta_{\widetilde{e}})_{S_{\widetilde{e}}}\|_2 $  and going to $\ell_1$ norm we then get 
\begin{align}
\label{equ:EdgeBound2}
    \|(\Delta_{\widetilde{e}})_{S_{\widetilde{e}}}\|_1
    \leq 
    \frac{\delta^{\text{Non-root}}_{2s^{\prime}} }{1 - \delta^{\text{Non-root}}_{s^\prime}} 
    \|\Delta_{\widetilde{e}}\|_1 
    + 
    \frac{\sqrt{s^{\prime}} \sqrt{1 + \delta^{\text{Non-root}}_{s^{\prime}}}}{1- \delta^{\text{Non-root}}_{s^{\prime}}} 
    \|A_v x_1 \|_2
\end{align}

\textbf{Combining upper bounds} Let us now combine the bounds on $\|(x)_{S_{1}}\|_1$ from \eqref{equ:RootBound}, as well as the bounds \eqref{equ:EdgeBound1} and \eqref{equ:EdgeBound2} for the edges $e \in E$. This yields  
\begin{align*}
    & \|(x_1)_{S_1}\|_1 
    + 
    \sum_{e \in E}\|(\Delta)_{S_{e}}\|_1  \\
    & = \|(x_1)_{S_{1}}\|_1 
    + 
    \sum_{e=(v,w) \in E :v,w \not=1 } \|(\Delta_{e} )_{S_{e}}\|_1
    + 
    \sum_{e=(v,w) \in E :v=1 \text{ or } w = 1} \|(\Delta_{e} )_{S_{e}}\|_1\\
    & \leq 
    \frac{\delta^{\text{Root}}_{2 s}}{1-\delta^{\text{Root}}_{s}}
    \|x_1 \|_1 \\
    &\quad\quad 
    +
    \frac{\delta^{\text{Non-root}}_{2s^{\prime}}}{1-\delta^{\text{Non-root}}_{s^{\prime}}}
    \underbrace{ 
    \sum_{e=(v,w) \in E :v,w \not=1 } 
    \Big( \Big\|\sum_{e \in \pi(w)} \Delta_{e}\Big\|_1
    + 
    \Big\|\sum_{e \in \pi(v)} \Delta_{e}\Big\|_1
    \Big)}_{\textbf{Term 1}} \\
    &\quad\quad
    \frac{\delta^{\text{Non-root}}_{2s^{\prime}}}{1-\delta^{\text{Non-root}}_{s^{\prime}}} \sum_{e \in E : v =1 \text{ or } w = 1} \|\Delta_{e}\|_1
  +  \underbrace{ 
  \frac{2 \sqrt{s^{\prime}} \sqrt{1 + \delta^{\text{Non-root}}_{s^{\prime}}}}{ 1-\delta^{\text{Non-root}}_{s^{\prime}}}
    \sum_{v \in V \backslash\{1\}}\|A_v x_1\|_2 }_{\textbf{Term 2}}.
\end{align*}
 Where we must now bound \textbf{Term 1} and \textbf{Term 2}. To bound \textbf{Term 1} we simply apply triangle inequality to get 
 \begin{align*}
     \textbf{Term 1} \leq 
     \sum_{e = (v,w) \in E :  v ,w\not= 1}
     2 \sum_{\widetilde{e} \in \pi(v) \cup \pi(w) }
     \|\Delta_{\widetilde{e}}\|_1
     \leq 2 \text{Deg}(V \backslash \{1\}) \text{Diam}(G) \sum_{e \in E} \|\Delta_{e}\|_1
 \end{align*}
where $\pi(v) \cup \pi(w)$ denotes the union without duplicates. We then note that the sum $\sum_{e = (v,w) \in E :  v ,w\not= 1} 2 \sum_{\widetilde{e} \in \pi(v) \cup \pi(w) } \dots $ can be seen as counting the number of times an edge is used on a path from any non-root node to the root. The edges which appear on most paths to the root are those directly connected to the root. The number of edges feeding into the edge directly connected to the root is then upper bounded by the max degree of non-root nodes times the graph diameter $\text{Deg}(V \backslash \{1\}) \text{Diam}(G)$.  
To bound \textbf{Term 2} we use Cauchy-Schwartz and recall the definition of $A_{\text{Combined}}$ to get
\begin{align*}
    \sum_{v \in V \backslash \{v\}} \|A_{v} x_1\|_2 
    \leq 
    \sqrt{n-1} \sqrt{ \sum_{v \in V \backslash \{v\}} \|A_{v} x_1\|_2^2 } 
    = (n-1) 
    \|A_{\text{Combined}} x_1\|_2
\end{align*}
Using the fact that $x_1 \in \text{Ker}(A_1)\backslash \{0\}$ as well as the Restricted Isometry Property of $A_{\text{Combined}}$, we have show the following upper bound for $\ell \geq 1$
\begin{align}
\label{equ:CombinedRIP}
    \textbf{Term 2}
    \leq
     \frac{6 \sqrt{(1 + \delta^{\text{Non-root}}_{s^{\prime}})(1 + \delta^{\text{Combined}}_{\ell})}}{ (1-\delta^{\text{Non-root}}_{s^{\prime}})(1-\delta^{\text{Root}}_{2\ell})}
    \frac{(n-1) \sqrt{s^{\prime}} }{\sqrt{\ell}}\big) \|x_1\|_1 
\end{align}
The proof of \eqref{equ:CombinedRIP} is then provided at the end.
Bringing everything together and collecting constants we get 
\begin{align*}
    & \|(x_1)_{S_1}\|_1 
    + 
    \sum_{e \in E}\|(\Delta)_{S_{e}}\|_1\\
    &   \leq 
    \underbrace{ 
    3 \max\Bigg\{
    \frac{\delta^{\text{Root}}_{2 s}}{1-\delta^{\text{Root}}_{s}},
    \frac{2 \text{Deg}(V \backslash \{1\}) \text{Diam}(G)  \delta^{\text{Non-root}}_{2s^{\prime}}}{1-\delta^{\text{Non-root}}_{s^{\prime}}},
    \frac{6 \sqrt{(1 + \delta^{\text{Non-root}}_{s^{\prime}})(1 + \delta^{\text{Combined}}_{\ell})}}{ (1-\delta^{\text{Non-root}}_{s^{\prime}})(1-\delta^{\text{Root}}_{2\ell})}
    \frac{(n-1) \sqrt{s^{\prime}} }{\sqrt{\ell}}\big) 
    \Bigg\}
    }_{\text{Multiplicative Term}}
    \\
    &
    \quad\quad\quad\quad\quad\quad\quad
    \quad\quad\quad\quad\quad\quad\quad
    \quad\quad\quad\quad\quad\quad\quad
    \quad\quad\quad\quad\quad\quad\quad
    \times
    \Big( 
    \|x_1\|_1 
     + 
    \sum_{e \in E}\|\Delta\|_1
    \Big)
\end{align*}
For the restricted nullspace property to be satisfied we must then ensure that $\textbf{Multiplicative Term} \leq 1/2$. This can then be ensured when setting $\ell = 156^2 (n-1)^2 s^{\prime}$ when the Restricted Isometry constants satisfy
\begin{align*}
    \delta^{\text{Root}}_{2s} \leq \frac{1}{4} \\
    \delta^{\text{Non-root}}_{2s^{\prime}} \leq \frac{1}{1 + 12 D} \\
    \delta^{\text{Root}}_{2\ell} \leq 1/2 \\
    \delta^{\text{Combined}}_{\ell} \leq 1
\end{align*}
Using theorem \ref{thm:RIPConc}, the conditions on $\delta^{\text{Root}}_{2s}, \delta^{\text{Root}}_{2\ell}$ and $\delta^{\text{Non-root}}_{2s^{\prime}}$ are ensured with probability  greater than $1-\epsilon$ when 
\begin{align*}
    N_{\text{Root}} \geq C \times 32 \times 156^2  \max\{n^2 s^{\prime},s\} \big(  \log(ed) + \log(1/\epsilon) \big)\\
    N_{\text{Non-root}} \geq C \times 13^2 \times \text{Deg}(V \backslash \{1\})^2 \text{Diam}(G) ^2 s^{\prime}
    \big(  \log(ed) + \log(n/\epsilon) \big)
\end{align*}
where  $\delta^{\text{Non-root}}_{2s^{\prime}}$   is the maximum restricted Isometry constant across the matrices $\{A_{v}\}_{v \in V \backslash \{1\}}$ and therefore, a union bound was taken. Meanwhile, for $\delta^{\text{Combined}}_{\ell}$, recall that the entries of $\sqrt{N_{\text{Non-root}} (n-1)} \times A_{\text{Combined}}$ are independent and sub-Gaussian i.e.\ $\widetilde{A}_{\text{Combined}}$ is the row-wise concatenation of $A_{v} = \widetilde{A}_{v}/\sqrt{N_{\text{Non-root}}}$ for $v \in V \backslash \{1\}$ where $\widetilde{A}_{v}$ has independent and identical sub-Gaussian entries.  Therefore, following Theorem \ref{thm:RIPConc} the condition on $\delta^{\text{Combined}}_{\ell}$ is then satisfied when 
\begin{align*}
    N_{\text{Non-root}} (n-1) \geq 
    C \times
    156^2 (n-1)^2 s^{\prime}
    \big( \log(ed) + \log(1/\epsilon) \big)
\end{align*}
Dividing both sides by $n-1$ and combining the conditions on $N_{\text{Non-root}}$ yields the result.

Let us now prove \eqref{equ:CombinedRIP}. 
Using \eqref{equ:RootBound} with $U$ being the largest $\ell$ entries of $x_1$ we have (since $x_1 \in \text{Ker}(A_1)\backslash \{0\}$) 
\begin{align*}
    \|x_1\|_2\leq \|(x_1)_{U^{c}}\|_2 + \|(x_1)_{U}\|_2 \leq \frac{1}{\sqrt{\ell}} \|x_1\|_1 + \frac{\delta^{\text{Root}}_{2 \ell}}{1-\delta^{\text{Root}}_{\ell}} \frac{1}{\sqrt{\ell}} \|x_1\|_1 = \big(1 + \frac{\delta^{\text{Root}}_{2 \ell}}{1-\delta^{\text{Root}}_{\ell}}\big) \frac{1}{\sqrt{\ell}} \|x_1\|_1
\end{align*} 
where have bounded using the shelling argument $U^{c} = B_1 \cup B_2 \cup \dots$ as $\|(x)_{U^{c}}\|_2 \leq \sum_{j \geq 1} \|(x)_{B_j}\|_2 \leq \frac{1}{\sqrt{\ell}} \|x\|_1$. That is $B_1$ is the largest $\ell$ entries of $x_1$ in $U^{c}$, $B_2$ is the largest $\ell$ entries in $(U \cup B_1)^{c}$ and so on. We then have $\|(x)_{B_{j-1}}\|_2 \leq \sqrt{\ell} \|(x)_{B_{j-1}}\|_{\infty} \leq \frac{1}{\sqrt{\ell}} \|(x)_{B_{j}}\|_1$.  Therefore we can bound with $c = \big(1 + \frac{\delta^{\text{Root}}_{2 \ell}}{1-\delta^{\text{Root}}_{\ell}}\big)$
\begin{align*}
    \|A_{\text{Combined}} x_1\|_2
    = 
    \|A_{\text{Combined}} \frac{x_1}{\|x_1\|_1} \|_2 \|x_1\|_1 
    & \leq 
    \Big( \max_{x: \|x\|_2 \leq \frac{c}{\sqrt{\ell}}, \|x\|_1 \leq 1 } \|A_{\text{Combined}} x\|_2
    \Big) 
    \|x_1\|_1
\end{align*}
where if we fix $x = \frac{x_1}{\|x_1\|_1}$ then it is clear $\|x\|_1 = 1$ and $\|x\|_2 = \frac{\|x_1\|_2}{\|x_1\|_1} \leq \frac{c}{\sqrt{\ell}}$.  We now study the maximum above. In particular, it can be rewritten since $c \geq 1$
\begin{align*}
    \max_{x: \|x\|_2 \leq \frac{c}{\sqrt{\ell}}, \|x\|_1 \leq 1 } \|A_{\text{Combined}} x\|_2
    & = \frac{c}{\sqrt{\ell}} \max_{x: \|x\|_2 \leq 1, \|x\|_1 \leq \frac{\sqrt{\ell}}{c}}
    \|A_{\text{Combined}} x\|_2\\
    & \leq 
    \frac{c}{\sqrt{\ell}} \max_{x: \|x\|_2 \leq 1, \|x\|_1 \leq \sqrt{\ell} }
    \|A_{\text{Combined}} x\|_2\\
    & = \frac{c}{\sqrt{\ell}}
    \max_{x \in \mathbb{B}_{2}(1) \cap \mathbb{B}_{1}(\sqrt{\ell}) }
    \|A_{\text{Combined}} x\|_2\\
\end{align*}
Using Lemma \ref{lem:balls} we can then bound 
\begin{align*}
    \max_{x \in \mathbb{B}_{2}(1) \cap \mathbb{B}_{1}(\sqrt{\ell}) }
    \|A_{\text{Combined}} x\|_2
    & \leq 
    \max_{x \in \mathbb{B}_{2}(3 ) \cap \mathbb{B}_{0}(\ell) }
    \|A_{\text{Combined}} x\|_2\\
    & \leq 
    3 \sqrt{1 + \delta_{\ell}^{\text{Combined}}}
\end{align*}
where at the end used the Restricted Isometry Property of $A_{\text{Combined}}$. Bringing everything together we get the bound for \textbf{Term 2} with $c  = 1 + \frac{\delta^{\text{Root}}_{2\ell}}{1-\delta^{\text{Root}}_{\ell}} \leq \frac{1}{1-\delta^{\text{Root}}_{2\ell}}$
\begin{align*}
    \textbf{Term 2}
    \leq
     \frac{6 \sqrt{(1 + \delta^{\text{Non-root}}_{s^{\prime}})(1 + \delta^{\text{Combined}}_{\ell})}}{ (1-\delta^{\text{Non-root}}_{s^{\prime}})(1-\delta^{\text{Root}}_{2\ell})}
    \frac{(n-1) \sqrt{s^{\prime}} }{\sqrt{\ell}}\big) \|x_1\|_1 
\end{align*}
as required.

\end{proof}

\section{Proofs for Noisy Case}
In this section we provide the proofs for the noisy setting. Section \ref{sec:BasisPursuitDenoisingProblem} begin by introducing the problem of Basis Pursuit Denoising. Section \ref{sec:TVBasisPursuitDenoisingProblem:proof} presents the proof of Theorem \ref{thm:TVBPD}. Section \ref{sec:Proof:Noisy} presents the proof for an intermediate lemma.

\subsection{Basis Pursuit Denoising}
\label{sec:BasisPursuitDenoisingProblem}
Let us begin by introducing Basis Pursuit Denoising. That is suppose $y = A x^{\star} + \epsilon$ for some noise $\epsilon \in \mathbb{R}^{n}$. The Basis Pursuit Denoising problem \citep{chen2001atomic} then considers replacing the equality with a bound on the $\ell_2$. Namely for $\eta \geq 0 $ 
\begin{align}
\label{equ:DenoisingBasisPursuit}
\min_{x } \|x\|_1 \text{ subject to } \|Ax - y \|_2 \leq \eta.
\end{align}
Naturally, the equality constraint $Ax=y$ in the noiseless setting has been swapped for an upper bound on the discrepancy $\|Ax - y\|_2$. To investigate guarantees for the solution to \eqref{equ:DenoisingBasisPursuit}, we consider the Robust Null Space Property, see for instance \citep{foucart2013invitation}. A matrix $A$ is said to satisfy the Robust Null Space Property for a set $S \subseteq \{1,\dots,p\}$ and parameters $\rho,\tau \geq 0$ if 
\begin{align}
\label{equ:RobustNullSpace}
\|x_{S}\|_1 \leq \rho \|x_{S^{c}}\|_1 + \tau \|Ax\|_2 
\text{ for all } x \in \mathbb{R}^{N}.
\end{align}
Given condition \eqref{equ:RobustNullSpace}, bounds on the $\ell_1$ estimation error between a solution to the Denoising Basis Pursuit problem \eqref{equ:DenoisingBasisPursuit} and the true underlying signal $x^{\star}$ can be obtained. That is, for any solution to \eqref{equ:DenoisingBasisPursuit}, $x \in \mathbb{R}^{p}$ with $y = Ax^{\star} + e$ where $\|e\|_2 \leq \eta$, we have
(see \citep{foucart2013invitation} with $z = x^{\star}$)
\vspace{-0.1cm}
\begin{align*}
\|x - x^{\star}\|_1 \leq \underbrace{ \frac{2(1+\rho)}{1-\rho}  \|(x^{\star})_{S^{c}}\|_1 }_{\text{Sparse Approximation}} + \underbrace{\frac{4 \tau}{1-\rho} \eta}_{\text{Noise}}.
\end{align*}
The first term above encodes that $x^{\star}$ is not exactly $s$ sparse, while the second term represents error from the noise. We now discuss the values taken by $\eta$ and $\tau$ in the case that $A$ has i.i.d.\ sub-Gaussian entries. Recall Theorem \ref{thm:RIPConc} that the scaled matrix $ A/\sqrt{N}$ in this case can satisfy a Restricted Isometry Property, and thus, it is natural to choose $\eta = \sqrt{N} \eta_{\text{Noise}}$ for $\eta_{\text{Noise}} \geq 0$ since the $\ell_2$ bound on the residuals in  \eqref{equ:DenoisingBasisPursuit} becomes $\| A x - y\|_2/\sqrt{N} \! \leq \! \eta_{\text{Noise}}$. We can then pick $\|e\|_2/\sqrt{N} \! \leq \! \eta_{\text{Noise}}$, which is an upper bound on the standard deviation of the noise. The Robust Null Space Property then holds in this case, see \citep[Theorem 4.22]{foucart2013invitation}, with $\tau \approx \sqrt{s}$, leading to a $\ell_1$ error bound of the order $\|x - x^{\star}\|_1 \! \lesssim \!  \|(x^{\star})_{S^{c}}\|_1 \!  + \! \eta_{\text{Noise}} \sqrt{s}$ (see \citep{wainwright2019high}).

\subsection{Proof of Theorem \ref{thm:TVBPD}}
\label{sec:TVBasisPursuitDenoisingProblem:proof}
We begin by recalling Section \ref{sec:Proof:reformulated} which reformulated the Total Variation Basis Pursuit problem \eqref{equ:TVBP} into a Basis Pursuit problem \eqref{equ:BasisPursuit:app}. In particular, we note that Total Variation Basis Pursuit Denoising \eqref{equ:DecentralisedDenoising} can be reformulated into Basis Pursuit Denoising \eqref{equ:DenoisingBasisPursuit} in a similar manner.  Let us suppose the root node signal $x^{\star}_1$  and the  $\{\Delta^{\star}_{e}\}_{e \in E}$ are approximately sparse and each agent $v \! \in \! V$ holds noisy samples $y_v \! \approx \! A_v x^{\star}_v$. Reformulating the Total Variation Basis Pursuit problem into a Basis Pursuit problem \eqref{equ:DecentralisedPursit:Reformulated} and bounding the $\ell_2$ norm of the residuals, then yields the \emph{Total Variation Basis Pursuit Denoising} problem 
\begin{align}
\label{equ:DecentralisedDenoising:Reformulated}
\min_{x_1, \Delta_e  \in E} \|x_1\|_1 & + \sum_{e \in E} \|\Delta_e \|_1 \text{ subject to }\\
 \sum_{v \in V} \| A_v \big(x_1 & + \sum_{e \in \pi(v)} \Delta_e \big) - y_v\|_2^2  \leq \eta^2 .
 \nonumber
\end{align}
Where $\eta^2$ now  upper bounds the squared $\ell_2$ norm of the noise summed across all of the nodes i.e.\ $\sum_{v \in V} \! \|A_v x^{\star}_v \! - \! y_v\|_2^2$. This is now in the form of \eqref{equ:DenoisingBasisPursuit} with an augmented matrix $A$ as in the reformulated problem described in Section \ref{sec:Proof:reformulated}. 

Let us now recall that $\delta^{\text{Non-root}}_k$ denotes the largest Restricted Isometry Constant of the matrices associated to the non-root agents $\{A_v\}_{v \in V \backslash \{1\} }$. Similarly, $\delta^{\text{Root}}_{k}$ denotes the  Restricted Isometry Constant associated to the root matrix $A_1$. 
The following theorem then gives, values for $\rho$ and $\tau$ for which the augmented matrix $A$ and sparsity set $S$ (described in Section \ref{sec:Proof:reformulated}) satisfies  the Robust Null Space Property \eqref{equ:RobustNullSpace}. 
\begin{lemma}
\label{equ:RobustNullSpace:Decentralised}
Consider the $A$ matrix and sparsity set $S$ as constructed in Section \ref{sec:Proof:reformulated}. Then $A$ satisfies the Robust Null Space Property with $\rho = \rho^{\prime}/(1-\rho^{\prime})$ and $\tau = \tau^{\prime} /(1-\rho^{\prime})$ where 
\begin{align*}
\rho^{\prime}  & = 4 \Big( \frac{n \delta^{\text{Non-root}}_{2s^{\prime}}}{1-\delta^{\text{Non-root}}_{s^{\prime}}} \vee \frac{\delta^{\text{Root}}_{2s}}{1-\delta^{\text{Root}}_{s}} \Big)
\quad \text{ and } \\ 
 \tau^{\prime} & = \frac{ \sqrt{1 + \delta^{\text{Non-root}}_{s^{\prime}}}}{1-\delta^{\text{Non-root}}_{s^{\prime}}} \vee \frac{ \sqrt{1 + \delta^{\text{Root}}_{s}}}{1-\delta^{\text{Root}}_{s}} \big( \sqrt{s} + \text{Deg}(G)\sqrt{n s^{\prime}} \big). 
\end{align*}
\end{lemma}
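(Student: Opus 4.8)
The plan is to reduce the Robust Null Space Property to a single \emph{full-norm} inequality and then convert. Working in the reformulated variables of Section~\ref{sec:Proof:reformulated}, so that $\|x_S\|_1 = \|(x_1)_{S_1}\|_1 + \sum_{e\in E}\|(\Delta_e)_{S_e}\|_1$ and $\|x\|_1 = \|x_1\|_1 + \sum_{e\in E}\|\Delta_e\|_1$, I would first establish, for every $x=(x_1,\{\Delta_e\}_{e\in E})$, a bound of the shape
\[
\|x_S\|_1 \;\le\; \rho'\,\|x\|_1 \;+\; \tau'\,\|Ax\|_2 .
\]
Since $\|x\|_1=\|x_S\|_1+\|x_{S^c}\|_1$, this rearranges to $(1-\rho')\|x_S\|_1 \le \rho'\|x_{S^c}\|_1 + \tau'\|Ax\|_2$, hence to the Robust Null Space Property with $\rho=\rho'/(1-\rho')$ and $\tau=\tau'/(1-\rho')$, exactly as stated; the step needs $\rho'<1$, which is guaranteed because the sample-size hypotheses of Theorem~\ref{thm:TVBPD} force the Restricted Isometry constants to be small via Theorem~\ref{thm:RIPConc}.

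The main tool is a robust version of Lemma~\ref{lem:RIPtoL2}: if $B$ satisfies the Restricted Isometry Property up to level $2k$ with constant $\delta_{2k}$, then for any vector $u$ and any $U$ with $|U|=k$,
\[
\|(u)_U\|_2 \;\le\; \frac{\delta_{2k}}{1-\delta_k}\,\frac{1}{\sqrt{k}}\,\|u\|_1 \;+\; \frac{\sqrt{1+\delta_k}}{1-\delta_k}\,\|Bu\|_2 .
\]
The proof mirrors Lemma~\ref{lem:RIPtoL2}: starting from $(1-\delta_k)\|(u)_U\|_2^2 \le \|B(u)_U\|_2^2$ and writing $B(u)_U = Bu - B(u)_{U^c}$, one bounds $|\langle B(u)_U, Bu\rangle| \le \sqrt{1+\delta_k}\,\|(u)_U\|_2\,\|Bu\|_2$ and controls $|\langle B(u)_U, B(u)_{U^c}\rangle|$ by the shelling argument and Proposition~\ref{prop:DisjointRIP}, then divides by $(1-\delta_k)\|(u)_U\|_2$. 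Multiplying by $\sqrt{k}$ converts this to an $\ell_1$ bound with $\ell_1$-coefficient $\delta_{2k}/(1-\delta_k)$ and noise coefficient $\sqrt{k}\,\sqrt{1+\delta_k}/(1-\delta_k)$.

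Next I introduce the node vectors $z_v := x_1 + \sum_{e\in\pi(v)}\Delta_e$, so the $v$-th block of $Ax$ is $A_v z_v$ and $\|Ax\|_2^2 = \sum_{v\in V}\|A_v z_v\|_2^2$. For the root, $z_1=x_1$, and the robust lemma with $B=A_1$, $U=S_1$, $k=s$ gives $\|(x_1)_{S_1}\|_1 \le \frac{\delta^{\text{Root}}_{2s}}{1-\delta^{\text{Root}}_{s}}\|x_1\|_1 + \sqrt{s}\,\frac{\sqrt{1+\delta^{\text{Root}}_{s}}}{1-\delta^{\text{Root}}_{s}}\|(Ax)_1\|_2$. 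For each edge $\widetilde e=(v,w)$ I write $\Delta_{\widetilde e}=z_w-z_v$, restrict to $S_{\widetilde e}$, and apply the triangle inequality $\|(\Delta_{\widetilde e})_{S_{\widetilde e}}\|_1 \le \|(z_w)_{S_{\widetilde e}}\|_1 + \|(z_v)_{S_{\widetilde e}}\|_1$; the robust lemma is then applied to each endpoint's node vector with its own matrix ($U=S_{\widetilde e}$, $k=s'$; root constants for the root endpoint, non-root constants otherwise), bounding the full norms crudely by $\|z_u\|_1\le\|x\|_1$. Summing the root bound and all $2(n-1)$ endpoint contributions gives an $\ell_1$ coefficient $2(n-1)\frac{\delta^{\text{Non-root}}_{2s'}}{1-\delta^{\text{Non-root}}_{s'}} + \frac{\delta^{\text{Root}}_{2s}}{1-\delta^{\text{Root}}_{s}} \le 4\big(n\frac{\delta^{\text{Non-root}}_{2s'}}{1-\delta^{\text{Non-root}}_{s'}} \vee \frac{\delta^{\text{Root}}_{2s}}{1-\delta^{\text{Root}}_{s}}\big)=\rho'$. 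For the noise term I collect the root piece $\sqrt{s}\,\frac{\sqrt{1+\delta^{\text{Root}}_{s}}}{1-\delta^{\text{Root}}_{s}}\|(Ax)_1\|_2$ and the edge pieces $\sqrt{s'}\,\frac{\sqrt{1+\delta^{\text{Non-root}}_{s'}}}{1-\delta^{\text{Non-root}}_{s'}}\sum_{\widetilde e}(\|(Ax)_v\|_2+\|(Ax)_w\|_2)$; since each node $u$ is counted $\text{Deg}(u)$ times, $\sum_{\widetilde e}(\cdots)=\sum_u \text{Deg}(u)\|(Ax)_u\|_2 \le \text{Deg}(G)\sqrt{n}\,\|Ax\|_2$ by Cauchy--Schwarz over the $n$ blocks, which yields the coefficient $\big(\frac{\sqrt{1+\delta^{\text{Non-root}}_{s'}}}{1-\delta^{\text{Non-root}}_{s'}} \vee \frac{\sqrt{1+\delta^{\text{Root}}_{s}}}{1-\delta^{\text{Root}}_{s}}\big)(\sqrt{s}+\text{Deg}(G)\sqrt{ns'})=\tau'$.

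I expect the main obstacle to be the edge bookkeeping rather than any single inequality. The delicate point is that $\Delta_{\widetilde e}$ cannot be controlled directly (its two endpoints live in \emph{different} matrices' approximate kernels), so it must be routed through the node vectors $z_v,z_w$, and then the per-block residuals $\|(Ax)_u\|_2$ must be aggregated into $\|Ax\|_2$ with the correct $\text{Deg}(G)\sqrt{n}$ factor and the correct count of node appearances. A secondary technicality is that edges incident to the root mix root and non-root constants; since $s'\le s$ gives $\delta^{\text{Root}}_{2s'}\le\delta^{\text{Root}}_{2s}$ and $\text{Deg}(1)\sqrt{s'}\le\text{Deg}(G)\sqrt{ns'}$, these contributions fold into the stated $\rho'$ and $\tau'$ without changing their form. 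The robust RIP-to-$\ell_2$ lemma itself is a routine extension of Lemma~\ref{lem:RIPtoL2}, differing only by retaining the extra inner product $\langle B(u)_U, Bu\rangle$.
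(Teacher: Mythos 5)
Your proposal matches the paper's proof essentially step for step: the same reduction of the Robust Null Space Property to the full-norm inequality $\|x_S\|_1 \le \rho'\|x\|_1 + \tau'\|Ax\|_2$, the same robust extension of Lemma~\ref{lem:RIPtoL2} that retains the residual term $\|Bu\|_2$, the same routing of each $\Delta_e$ through the node vectors $x_1+\sum_{e\in\pi(v)}\Delta_e$ via the triangle inequality, and the same degree-counting plus Cauchy--Schwarz aggregation of the per-block residuals into $\text{Deg}(G)\sqrt{n s'}\,\|Ax\|_2$. No gaps; if anything you are slightly more explicit than the paper about how the root-incident edges fold into the stated constants.
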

Note the parameter $\tau^{\prime}$ scales (up to a network degree \text{Deg}(G) factor) with the sparsity of the differences. Naturally we require $\rho^{\prime} < 1/2$, which can be ensured if each agent $\{A_v\}_{v \in V}$ have i.i.d.\ sub-Gaussian matrices. In particular, we require 
\begin{align*}
    \delta^{\text{Root}}_{2s} \leq \frac{1}{9}
    \text{  and  }
    \delta^{\text{Non-root}}_{2s^{\prime}} \leq \frac{1}{1+8n}
\end{align*}
Following Theorem \ref{thm:RIPConc} this can be ensured with probability greater than $1-\epsilon$ when 
\begin{align*}
    N_{\text{Root}} \geq 81 C \big(  2s\log(ed/s) + \log(n/\epsilon)\big) \\
    N_{\text{Non-root}} \geq 81 n^2 C \big(  2s^{\prime}\log(ed/s^{\prime}) + \log(n/\epsilon)\big)
\end{align*}
Choosing  $\eta = \sqrt{\sum_{v \in V} N_{v}} \eta_{\text{Noise}}$ where $\eta_{\text{Noise}} > 0$ upper bounds the noise standard deviation across all of the agents, the $\ell_1$ estimation error of the solution to \eqref{equ:DecentralisedDenoising} is then of the order 
\begin{align*}
 \|x_1 - x^{\star}_1\|_1 & + \sum_{e \in E} \|\Delta_e - \Delta^{\star}_e\|_1 
  \lesssim 
\underbrace{ \|(x^{\star})_{S^{c}}\|_1 }_{\text{Approximate Sparsity}} 
+ \underbrace{ (\sqrt{s} + \text{Deg}(G) \sqrt{n s^{\prime}}) \eta_{\text{Noise}}}_{\text{Noise}}.
\end{align*}
The error scales with the approximate sparsity of the true signal through $\|(x^{\star})_{S^{c}}\|_1$ and now the noise term with the effective sparsity $\sqrt{s} + \text{Deg}(G) \sqrt{n s^{\prime}}$. Picking $x^{\star}$ to be supported on $S$, the approximate sparsity term goes to zero, as required.

\subsection{Proof of Lemma \ref{equ:RobustNullSpace:Decentralised}}
\label{sec:Proof:Noisy}

We now set to show that the Robust Null Space Property \eqref{equ:RobustNullSpace} holds for some $\rho,\tau$. We note it suffices to show the following which is  equivalent to the Robust Null Space Property 
\begin{align*}
\|(x)_{S}\|_1 \leq \rho^{\prime} \|x\|_1 + \tau^{\prime} \|Ax\|_2 \text{ for all } x \in \mathbb{R}^{N}. 
\end{align*}
In particular, by adding $\rho \|(x)_{S}\|_1$ to both sides of the inequality for the Robust Null Space Property \eqref{equ:RobustNullSpace} and dividing by $1+\rho$, we see that if the above holds then the Robust Null Space Property holds with $\rho = \frac{\rho^{\prime}}{1-\rho^{\prime}}$ and $\tau = \tau^{\prime}/(1-\rho^{\prime})$.

The proof naturally follows the noiseless case (proof for Theorem \ref{thm:Starlike}) although with an additional error term owing the noise. To make the analysis clearer, the steps following the noiseless case from the proof of Theorem \ref{thm:Starlike}, are simplified.

We begin by controlling the $\ell_1$ norm of $x_1 + \sum_{e \in \pi(v)} \Delta_e = x_{v}$ for $v \in V$ restricted to subsets $U$.  Considering the subset $U$ of size $|U| \leq s^{\prime}$, and in particular, the set $U$ associated to the largest $s^{\prime}$ entries of $x_v$. Following the shelling argument used within the proof of Lemma \ref{lem:RIPtoL2}
decompose  $U^{c} = B_1 \cup B_2 \cup \dots$ where $B_1$ are the largest $s^{\prime}$ entries of $x_v$ within $U^{c}$, $B_2$ are the $s^{\prime}$ largest entries of $x_v$ in $(U \cup B_1)^{c}$  and so on. We can then upper bound
\begin{align*}
(1-\delta^{\text{Non-Root}}_{s^\prime})\|(x_{v})_{U}\|_2^2 
& \leq 
\|A_v  (x_{v})_{U}\|_2^2 \\
& = \Big\langle A_v  (x_{v})_{U},A_v \Big( x_{v} - \sum_{j \geq 1} (x_{v})_{B_j} \Big) \Big\rangle \\
& = \langle A_v  (x_{v})_{U},A_v  x_{v}\rangle - \sum_{j \geq 1} \langle A_v  (x_{v})_{U},A_v (x_{v})_{B_j}\rangle  \\
& \leq \sqrt{1+\delta^{\text{Non-root}}_{s^{\prime}}} \|(x_{v})_{U}\|_2 \|A_v  x_{v}\|_2 + \frac{\delta^{\text{Non-root}}_{2s^\prime}}{\sqrt{s^{\prime}}} \|(x_{v})_{U}\|_2 \|x_{v}\|_1
\end{align*}
where we used the Restricted Isometry Property of $A_v$ to upper bound inner product $\langle A_v  (x_{v})_{U},A_v  x_{v}\rangle \leq \|A_v  (x_{v})_{U}\|_2 \|A_v  x_{v}\|_2 \leq \sqrt{1+\delta^{\text{Non-root}}_{s^{\prime}}} \|(x_{v})_{U}\|_2 \|\|A_v  x_{v}\|_2  $ and followed the steps in the proof of Lemma \ref{lem:RIPtoL2} to upper bound  $\sum_{j \geq 1} \langle A_v  (x_{v})_{U},A_v (x_{v})_{B_j}\rangle   \leq \delta^{\text{Non-root}}_{2s^{\prime}} \|(x_{v})_{U}\|_2 \sum_{j \geq 1} \| (x_{v})_{B_j}\|_1 \leq \frac{1}{\sqrt{s^{\prime}}} \delta^{\text{Non-root}}_{2s^{\prime}} \|(x_{v})_{U}\|_2 \|x_{v}\|_1 $. Dividing both sides by $(1-\delta^{\text{Non-root}}_{s^\prime})\|(x_{v})_{U}\|_2$ we then get 
\begin{align*}
	\|(x_{v})_{U}\|_2\leq 
	\frac{\delta^{\text{Non-root}}_{2s^\prime}}{1-\delta_{s^{\prime}}}\frac{1}{ \sqrt{s^{\prime}}} 
	\|x_{v}\|_1
	+
	\frac{ \sqrt{1+\delta^{\text{Non-root}}_{s^{\prime}}}}{1 - \delta^{\text{Non-root}}_{s^{\prime}}} \|A_v  x_{v}\|_2 
\end{align*}
Using that $\|(x_{v})_{U}\|_2 \geq \frac{1}{\sqrt{s^{\prime}}} \|(x_{v})_{U}\|_1$  as well as simply upper bounding $\|x_{v}\|_1 = \| x_1 + \sum_{e \in \pi(v)} \Delta_e\|_1 \leq \|x_1\|_1 + \sum_{e \in \pi(v)} \|\Delta_e\|_1 \leq \|x_1\|_1 + \sum_{e \in E } \|\Delta_e\|_1$ we have
\begin{align}
\label{equ:l_1norm:noise:edge}
\big\| \big( x_1 + \sum_{e \in \pi(v)} \Delta_e \big)_{U}\big\|_1 
\leq 
\frac{\delta^{\text{Non-root}}_{2s^{\prime}}}{1-\delta_{s^{\prime}}} \big( \|x_1\|_1 + \sum_{e \in E} \|\Delta_e\|_1 \big) 
+ 
\frac{ \sqrt{1+\delta^{\text{Non-root}}_{s^{\prime}}}}{1 - \delta^{\text{Non-root}}_{s^{\prime}}} \sqrt{ s^{\prime}} \|A_v  x_{v}\|_2.
\end{align} 

For $e =\{v,w\}\in E$ we now set to bound $\|(\Delta_{e})_{S_{e}}\|_1$ where recall $S_e$ are the largest $s^{\prime}$ elements of $\Delta_e$. Suppose $w$ is closest to the root node. If not, swap the $v,w$ in the following. By adding and subtracting $\big( x_1 + \sum_{\widetilde{e} \in \pi(w) } \Delta_{\widetilde{e}} \big)_{S_{e}}$ we then get 
\begin{align*}
\|(\Delta_{e})_{S_{e}}\|_1 
& \leq 
\big\| \big( x_1 + \sum_{\widetilde{e} \in \pi(w) } \Delta_{\widetilde{e}} \big)_{S_{e}} \big\|_1 
+ 
\big\| \big( x_1 + \sum_{\widetilde{e} \in v } \Delta_{\widetilde{e}} \big)_{S_{e}} \big\|_1 \\
& \leq 
\frac{2 \delta^{\text{Non-root}}_{2s^{\prime}}}{1-\delta_{s^{\prime}}} \big( \|x_1\|_1 + \sum_{e \in E} \|\Delta_e\|_1 \big) 
+ 
\frac{ \sqrt{1+\delta^{\text{Non-root}}_{s^{\prime}}}}{1 - \delta^{\text{Non-root}}_{s^{\prime}}} \sqrt{ s^{\prime}} \big( \|A_v  x_{v}\|_2 + \|A_w  x_{w}\|_2 \big)
\end{align*} 
where on the second inequality we applied $\eqref{equ:l_1norm:noise:edge}$ twice.  Summing the above over all edges $e \in E$, we note 
$\|A_v  x_{v}\|_2$ for $v \in V$ appears at most the max degree of the graph, as such we get 
\begin{align*}
\sum_{e \in E} \|(\Delta_{e})_{S_{e}}\|_1  
& \leq 
\frac{2 n \delta^{\text{Non-root}}_{2s^{\prime}}}{1-\delta^{\text{Non-root}}_{s^{\prime}}} \big( \|x_1\|_1 + \sum_{e \in E} \|\Delta_e\|_1 \big) 
+ 
\frac{ \sqrt{1+\delta^{\text{Non-root}}_{s^{\prime}}}}{1 - \delta^{\text{Non-root}}_{s^{\prime}}} \text{Deg}(G) \sqrt{ s^{\prime}} \sum_{v \in V} \|A_v  x_{v}\|_2\\
& \leq 
\frac{2 n \delta^{\text{Non-root}}_{2s^{\prime}}}{1-\delta^{\text{Non-root}}_{s^{\prime}}} \big( \|x_1\|_1 + \sum_{e \in E} \|\Delta_e\|_1 \big) 
+ 
\frac{ \sqrt{1+\delta^{\text{Non-root}}_{s^{\prime}}}}{1 - \delta_{s^{\prime}}} \text{Deg}(G) \sqrt{ n s^{\prime}} \sqrt{ \sum_{v \in V} \|A_v  x_{v}\|_2^2}
\end{align*}
where on the final inequality we upper bounded $\sum_{v \in V} \|A_v  x_{v}\|_2 \leq \sqrt{n} \sqrt{\sum_{v \in V}  \|A_v  x_{v}\|_2^2}$. 

We now consider the bound for $\|(x_1)_U\|_1$ but for subsets $U$ of size up to $s$. Following an identical set of steps as for \eqref{equ:l_1norm:noise:edge}, but with $s^{\prime}$ swapped with $s$ and $\delta_{s^{\prime}}$ swapped with $\delta^{(1)}_{s}$, we get the upper bound 
\begin{align*}
\| (x_1)_{U} \|_1 
& \leq 
\frac{\delta^{\text{Root}}_{2s}}{1-\delta^{\text{Root}}_{s}} \big( \|x_1\|_1 + \sum_{e \in E} \|\Delta_e\|_1 \big) 
+ 
\frac{ \sqrt{1+\delta^{\text{Root}}_{s}}}{1 - \delta^{\text{Root}}_{s}} \sqrt{ s} \|A_1  x_{1}\|_2 \\
& 
\leq 
\frac{\delta^{\text{Root}}_{2s}}{1-\delta^{\text{Root}}_{s}} \big( \|x_1\|_1 + \sum_{e \in E} \|\Delta_e\|_1 \big) 
+ 
\frac{ \sqrt{1+\delta^{\text{Root}}_{s}}}{1 - \delta^{\text{Root}}_{s}} \sqrt{ s} \sum_{v \in V} \sqrt{ \|A_v  x_{v}\|_2^2} 
\end{align*}
where at the end we simply upper bounded $ \|A_1  x_{1}\|_2 =  \sqrt{ \|A_1  x_{1}\|_2^2} \leq \sqrt{  \sum_{v \in V} \|A_v x_v \|_2^2 }$. 
Picking $U = S_1$, adding together the upper bound for $\sum_{e \in E} \|(\Delta_{e})_{S_{e}}\|_1 $ and $\| (x_1)_{U} \|_1 $, and collecting terms then yields the result.

\end{document}